\documentclass{amsart}
\allowdisplaybreaks
\usepackage{amsfonts}
\usepackage{}
\usepackage{amsmath}
\usepackage{paralist}
\usepackage{amssymb}
\usepackage{amsthm}
\usepackage{amscd}
\usepackage{graphicx,mathrsfs}
\usepackage[colorlinks=true]{hyperref}
\hypersetup{urlcolor=blue, citecolor=red}

\numberwithin{equation}{section}
\newtheorem{Thm}{Theorem}[section]
\newtheorem{Lem}{Lemma}[section]
\newtheorem{Prop}{Proposition}[section]

\newtheorem{Def}{Definition}[section]
\newtheorem{Cor}{Corollary}[section]
\newtheorem{example}{Example}[section]
\newtheorem{Rem}{Remark}[section]

\begin{document}
\title{Mean Field Games with H\"{o}rmander diffusions}
\thanks{We greatly appreciate the anonymous referees for their valuable comments and suggestions. This work is funded by the National Natural Science Foundation of China (No. 12271269, No. 12326318 and No. 12471141), Shandong Provincial Natural Science Foundation (No. ZR2026QC1077), and Shandong Postdoctoral Science Foundation (No. SDZZ-ZR-202501395).
All authors contributed equally to this work.}
\author{Yiming Jiang}
\address{School of Mathematical Sciences and LPMC\\ Nankai University\\ Tianjin 300071 China}
\email{ymjiangnk@nankai.edu.cn}
\author{Jingchuang Ren}
\address{Yau Mathematical Sciences Center\\ Tsinghua University\\ Beijing 100084 China}
\email{ren\_math@tsinghua.edu.cn}
\author{Yawei Wei}
\address{School of Mathematical Sciences and LPMC\\ Nankai University\\ Tianjin 300071 China}
\email{weiyawei@nankai.edu.cn}
\author{Jie Xue}
\address{School of Mathematical Sciences\\ Ocean University of China\\ Qingdao 266100 China}
\email{jiexue@ouc.edu.cn}
\keywords{Mean field games; Degenerate parabolic equations; H\"{o}rmander vector fields; A priori Schauder estimates}
	
\subjclass[2020]{35K65; 35Q89; 49N80}
\begin{abstract}
In this paper, we study a class of degenerate mean field games (MFG, for short) systems with H\"{o}rmander diffusion, in which  the typical agent can move only along admissible direction. We establish the well-posedness of the MFG systems in intrinsic H\"{o}lder spaces, which describes the Nash equilibria for a differential game with infinitely many small players. The analysis builds upon degenerate parabolic equations defined on $\mathbb{T}^d$ induced by H\"{o}rmander vector fields without any group structure, for which we develop a global regularity theory for the first time in this general setting. A central difficulty arises from the anisotropic geometry of the state space, which induces non-commutativity and inhomogeneity of the underlying vector fields. Instead of the classical fundamental solution framework, we present an alternative method to derive a priori Schauder estimates for general H\"{o}rmander degenerate parabolic equations via Campanato spaces.
\end{abstract}
\maketitle

\section{INTRODUCTION}	
\subsection{Statement of the problem and motivation}
In this paper, our first aim is a priori Schauder estimates of the degenerate Cauchy problems as follows:
\begin{equation}\label{peq}
\left\{
  \begin{aligned}
     & \mathcal {H}u(t,x)=f(t,x),&(t,x)\in (0,T]\times\mathbb{T}^d,  \\
    &u(0,x)=g(x)& x\in\mathbb{T}^d,
  \end{aligned}
\right.
\end{equation}
where $\mathbb{T}^d=\mathbb{R}^d/\mathbb{Z}^d$, $d\geq3$, is
the standard $d$-dimensional torus, the parabolic non-divergence operator $\mathcal {H}$ is given by
\begin{equation}\label{001}
  \mathcal {H}:=\partial_t-\Delta_\mathcal {X}+c(t,x).
\end{equation}
 Here $\Delta_\mathcal {X}=\sum^{q}_{k=1}X_k^2$ is the Laplacian with respect to a set of vector fields $\mathcal {X}=\{X_1,X_2,\ldots,X_q\}$, $2\leq q<d$, defined on $\mathbb{T}^d$ with the form
\begin{equation}\label{sigma}
  X_k=\sum^d_{i=1}\sigma^{ik}(x)\partial_{x_i},\ \text{for}\ k=1,\ldots,q,
\end{equation}
where $\sigma^{ik}(\cdot)$  is smooth and possibly vanishing such that $\mathcal {X}$ satisfies the H\"{o}rmander's condition (see Definition \ref{HC}).  A fundamental property of H\"{o}rmander vector fields is that, at any point, the subspace spanned by their ellipticity directions has dimension strictly less than that of the state space, while the remaining directions are generated by commutators. Given any finite time $T>0$, the coefficients  $c(t,x)$, the functions $f(t,x)$  and $g(x)$  belong to intrinsic H\"{o}lder spaces for some $\alpha\in(0,1)$. Additional details can be found in Definition \ref{2.06}.

The second aim of this work is to apply the aforementioned regularity results to  study the well-posedness and regularity for the following MFG systems
\begin{equation}\label{MFG}
\left\{
  \begin{array}{ll}
    -\partial_t u-\Delta_\mathcal {X} u+H(x,Xu)=F(x,m),\qquad\quad & (t,x)\in [0,T)\times\mathbb{T}^d,\ \rm{(HJE)}\\
    \partial_t m-\Delta_\mathcal {X} ^*m-{\rm{div}}_{\mathcal {X}^*}(mD_pH(x,Xu))=0,\ \ & (t,x)\in (0,T]\times\mathbb{T}^d,\ \rm{(FPE)}\\
   u(T,x)=G(x,m_T),\  m(0,x)=m_0,\qquad\quad\ \ &x\in \mathbb{T}^d,\qquad\qquad\ \
  \end{array}
\right.
\end{equation}
which describes a differential game with larger number of small players.
  Here $X=(X_1,\ldots,X_q)$ is the corresponding subgradient vector associated to the vector fields $\mathcal {X}$ given by \eqref{sigma}, $\Delta_\mathcal {X}^*$ and ${\rm{div}}_{\mathcal {X}^*}$ are respectively the Laplacian and the divergence operator induced by the dual vector fields
\begin{equation}\label{ck}
X_k^*=-X_k+c_k,\ c_k=-\sum^{d}_{i=1}\partial_{x_i} \sigma^{ik}(x),\ \ k\in1,2,\ldots,q,
\end{equation}
$m_0\in\mathcal {P}(\mathbb{T}^d)$ is the initial distribution of the agents, and $\mathcal {P}(\mathbb{T}^d)$ denotes the space of Borel probability measures on $\mathbb{T}^d$ with finite first moments.

From the perspective of a single agent, the introduction of H\"{o}rmander vector fields $\mathcal {X}$ means that the agent can move only along admissible directions: a subspace of the tangent space. The MFG system \eqref{MFG} consists of a Hamilton-Jacobi equation (HJE, for short) for  $u(t,x)$ and a Fokker-Planck equation (FPE, for short) for $m(t,x)$. 
Note that the evolution of the value function $u(t,x)$ of a typical player is coupled to the mean-field density $m(t,x)$ of all agents, while the dynamics of the density $m(t,x)$ is in turn driven by the coefficients arising from $D_pH(x,Xu)$.


The heuristic interpretation of the MFG system \eqref{MFG} is the following. An average player controls the stochastic differential equation (SDE, for short)
\begin{equation}\label{SDE}
  \left\{
 \begin{array}{ll}
dZ_s=\sigma(Z_s)\alpha_s ds+\sqrt{2}\sigma(Z_s)dB_s,\\
    Z_t=x,
     \end{array}
\right.
\end{equation}
where $Z_s=(Z_s^1,\ldots,Z_s^d)^{T}$ is the $d$-dimensional state variable, $x\in\mathbb{T}^{d}$ is some fixed initial condition, $\sigma=\{\sigma^{ik}\}:[0,T]\times\mathbb{T}^d\to\mathbb{T}^{d\times q}$, is the intensity of the individual noise possibly vanishing, and $B_s=(B^1_s,\ldots,B^q_s)^{T}$ is a standard $q$-dimensional independent Brownian motion on a filtered probability space $(\mathbf{\Omega},\mathscr{F},\{\mathscr{F}_t\}_{t\geq0},\mathbf{P})$ satisfying the usual conditions in the stochastic analysis, see Chapter 3 in \cite{P09}. The control $\alpha=(\alpha_1,\ldots,\alpha_q)^{T}:[0,T]\times\mathbf{\Omega}\to\mathrm{A}$ are measurable $\{\mathscr{F}_t\}_{t\geq0}$-adapted maps taking values in some metric space $\mathrm{A}$, while $\mathcal {A}$ denotes the set of admissible controls. Assume that the coefficient $\sigma$ is Lipschitz continuous in the space variable $Z_s$, then the typical player aims at minimizing the cost function, the value function corresponds to
\begin{equation}\label{4.21}
u(t,x)=\inf_{\alpha\in\mathcal {A}}\mathbf{E}\Big[\int^T_tH^*(Z_s,\alpha_s)+F(Z_s,m_s)ds+G(Z_T,m_T)\Big],
\end{equation}
where $H^*(Z_s,\alpha_s)$ is the Legendre transform of $H(x,Xu)$ w.r.t. the second variable.

In this paper, we assume that the Hamiltonian is given by
\begin{equation}\label{eq1.22}
H(x,Xu)=\frac{1}{2}\left|Xu\right|^2,
\end{equation}
then the optimal feedback of each agent is given by
\begin{equation}\label{11.20}
\alpha^*(t,x)=-D_{p}H(x,Xu)=-Xu.
\end{equation}
For convenience, we use $\sigma^{ik}$ instead of $\sigma^{ik}(Z_s)$.
Note that the vanishing of the coefficient $\sigma$ implies that the typical player has some forbidden directions of states, here we assume that the matrix $\sigma(x)$ satisfies
\begin{equation}\label{con2}
 \sum^q_{k=1} \sum_{i=1}^{d}\sigma^{ik}(x)\partial_{x_i}\sigma^{jk}(x)=0,\ \text{for\ any}\ j=1,\ldots,d,
\end{equation}
a condition arising from relation \eqref{case11} below. 
\begin{Rem}
In the general degenerate MFG setting, the second order operator takes the form ${\rm{tr}}(\sigma\sigma^{T} D^2u)$ given by \eqref{2.2} as below. To further characterize this operator in terms of sum-of-squares operators $\Delta_\mathcal {X}$, we impose condition \eqref{con2} on the diffusion matrix \(\sigma(x)\). However, we emphasize that this condition \eqref{con2} arises natural, as it covers a broad range of settings, including operators on the Heisenberg group, Grushin-type operators, and many others. Concrete examples are provided as follows.
\end{Rem}
\begin{example}\label{Hes}
An easy example for the matrix $\sigma$ satisfying \eqref{con2} is
$$\sigma(x)=\left(
              \begin{array}{cc}
                1 & 0 \\
                0 & 1 \\
                -x_2 & x_1 \\
              \end{array}
            \right),\ \text{for}\ x=(x_1,x_2,x_3)\in\mathbb{R}^3.
$$
In view of \eqref{sigma}, the vector fields $\mathcal {X}=\{X_1,X_2\}$ is given by
$$X_1=\partial_{x_1}-x_2\partial_{x_3},\ X_2=\partial_{x_2}+x_1\partial_{x_3},$$
in the Heisenberg case. It is clear that the Kohn Laplacian
 $\triangle_\mathcal {X}=X^2_1+X^2_2$
  is a H\"{o}rmander operator 
and the set $\mathcal {X}$ is free up (see Definition \ref{FU}).
\end{example}
\begin{example}\label{Gru}
The Grushin-type operators given by
\[\mathcal {L}_G=\Delta_{x}+\lambda^2(x)\Delta_{y},\]
have been extensively studied \cite{F94}, where $\lambda(x)\in C(\mathbb{R}^n)$, $x\in\mathbb{R}^n,\ y\in\mathbb{R}^m$.

The corresponding diffusion matrix $\sigma(x)$ is an $(n+m)$-order diagonal square matrix, where the diagonal elements of the first $n$-dimensions are $1$, and the diagonal elements of the remaining $m$-dimensions are $\lambda(x)$. In view of \eqref{sigma}, the vector fields $\mathcal {X}=\{X_1,\ldots,X_n,X_{n+1},X_{n+m}\}$ is given by
$$X_i=\partial_{x_i},\ X_j=\lambda(x)\partial_{y_j},\ i=1,\ldots,n,\ j=n+1,\ldots,n+m.$$
The characteristic of Grushin-type operators the vector fields satisfy a symmetry property, that is, $X_k^*=-X_k$.
It is easy to verify that the coefficients $\sigma$ satisfy the condition \eqref{con2}.
\end{example}
Now we concisely give a derivation of the MFG system \eqref{MFG} as below.
Fixed $\bar{m}\in C([0,T],\mathcal {P}(\mathbb{T}^d))$, we first provide a derivation of the following HJE
\begin{equation}\label{HJE}
\left\{
 \begin{aligned}{ll}
 &-\partial_t u-\Delta_{\mathcal {X}} u+H(x,Xu)=F(x,\bar{m}),&(t,x)\in [0,T)\times\mathbb{T}^d,\\
  &u(T,x)=G(x,\bar{m}_T),& x\in\mathbb{T}^d.
 \end{aligned}
\right.
\end{equation}
The main idea  refers to Chapter 2 in \cite{R18}. For any stopping time $\tau\in[t,T]$, by using the It\^{o}'s formula to $u$ on $[t,\tau]$, we have
\begin{align}\label{2.2}
\begin{split}
u(\tau, Z_\tau)-u(t, Z_t)
&= \int^\tau_t\partial_s u+Du\sigma\alpha_s+{\rm{tr}}(\sigma\sigma^{T} D^2u)ds+\int^\tau_t Du\sigma dB_s.
\end{split}
\end{align}

Consider that the operator $X_k$ is defined in \eqref{sigma}, then we have
\begin{align}\label{case11}
\begin{split}
  \Delta_\mathcal {X} u= \sum^q_{k=1}\sum^d_{i=1}\sigma^{ik}\partial_{x_i}\big(\sum^d_{j=1}\sigma^{jk}\partial_{x_j}u\big) ={\rm{tr}}(\sigma\sigma^{T} D^2u)+\sum^q_{k=1}\sum^d_{i,j=1}\partial_{x_i}(\sigma^{jk})\sigma^{ik}\partial_{x_j}u.
\end{split}
\end{align}
 If the diffusion matrix $\sigma$ satisfies \eqref{con2}, then we have
\begin{equation*}
\Delta_{\mathcal {X}}u={\rm{tr}}(\sigma\sigma^{T} D^2u).
\end{equation*}
Since
$Du\sigma =Xu,$ then we have
\begin{equation}\label{ce2}
u(\tau, Z_\tau)-u(t, Z_t)
= \int^\tau_t\partial_s u+\Delta_{\mathcal {X}}u+Xu \alpha ds+\int^\tau_tXu dB_s.\end{equation}
By the dynamic programming principle, we have
\begin{equation}\label{1.08}
 u(t,x)=\inf_{\alpha\in\mathcal{A}}\mathbf{E}\Big[\int^\tau_t\frac{1}{2}|\alpha_s|^2
 +F(Z^{x,t}_s,\bar{m}_s)ds+u(\tau,Z^{x,t}_\tau)\Big],
\end{equation}
where $Z^{x,t}_\tau$ is the solution to SDE \eqref{SDE} starting from $x$ at the time $t$. Then martingale property gives that $\mathbf{E}\left[\int^\tau_t Xu dB_s\right]=0$. Plugging \eqref{ce2} into \eqref{1.08}, we have
\begin{align*}
   \inf_{\alpha\in\mathcal{A}}\mathbf{E}\Big[\int^{\tau}_{t}\frac{1}{2}|\alpha_s|^2+F(Z^{x,t}_s,\bar{m}_s)
   +\partial_su+\Delta_{\mathcal {X}}u+Xu \alpha ds\Big] &=  0.
\end{align*}
Let $\tau=t+\delta$, divide by $\delta$ and let $\delta\rightarrow0$, we obtain
\begin{equation}\label{H1}
  \partial_tu+\Delta_\mathcal {X} u+F(x,\bar{m})+\inf_{\alpha\in\mathcal{A}}\big\{H^*(x,\alpha)+Xu \alpha \big\}=0,
\end{equation}
that implies that the HJE \eqref{HJE} is valid.

Next we give the derivation of the following FPE
\begin{equation}\label{FPE}
\left\{
 \begin{aligned}{ll}
   & \partial_t m-\Delta_\mathcal {X} ^*m-{\rm{div}}_{\mathcal {X}^*}(mD_pH(x,Xu))=0,&(t,x)\in (0,T]\times\mathbb{T}^d,\\
   & m(0,x)=m_0,& x\in\mathbb{T}^d,
\end{aligned}
\right.
\end{equation}
which refers to Chapter 1 in \cite{C16}. If $\varphi:\mathbb{T}^d\rightarrow\mathbb{R}$ is a $C_{\mathcal {X}}^2$ function with bounded derivatives, using the It\^{o}'s formula for \eqref{FPE} on $[0,t]$, similar to \eqref{2.2}, we get
\begin{align}\label{2.3}
  \varphi(Z_t) 
   &= \varphi(Z_0)+\int^{t}_{0}\Delta_\mathcal {X}\varphi (Z_s)+X\varphi(Z_s) \alpha_sds+\sqrt{2}\int^{t}_{0}X\varphi(Z_s) dB_s.
\end{align}
We denote the distribution of $X_t$ by $\mu_t(dx)=P(Z_t\in dx)$, and use the notation
$\left<\varphi,\mu\right>=\int_{\mathbb{T}^d}\varphi(x)\mu(dx).$
Taking expectations on both sides of \eqref{2.3}, we have
\begin{equation}\label{mu}
\left <\varphi,\mu_t\right>=\big <\varphi,\mu_0\big>+\int^{t}_{0}\big <\Delta_\mathcal {X}\varphi+X\varphi \alpha,\mu_s\big>ds
                  =\big <\varphi,\mu_0+\int^{t}_{0}\Delta_\mathcal {X}^*\mu_s-{\rm{div}}_{\mathcal {X}^*}(\mu_s\alpha) ds\big>,
\end{equation}
the last equality is provided by the integration by parts.
Assume that $\mu_t$  has a density satisfying $\mu_t(dx)=m(t,x)dx$. For the arbitrary of $\varphi$ in \eqref{mu}, then the density $m$ is a solution of
\begin{equation}\label{eq2.11}
\partial_t m=\Delta_\mathcal {X}^*m-{\rm{div}}_{\mathcal {X}^*}(m\alpha),
\end{equation}
with the initial condition $m_0$. Combined with \eqref{11.20}, the FPE \eqref{FPE} is valid.

\subsection{Research history}
The study of degenerate equations  associated with  H\"{o}rmander  vector fields has attracted a lot of attention in recent decades. The theory of H\"{o}rmander operators has been advanced by the H\"{o}rmander's groundbreaking work \cite{Hormander}, in which he proposed the well-known H\"{o}rmander's theorem. To be specific, given a family of H\"{o}rmander vector fields, then the operator of the form
\begin{equation}\label{Lu}
\mathcal {L}=\sum^{q}_{i=1}X^2_i+X_0
\end{equation}
is hypoelliptic, which means that for any distribution $u$ in an open set $\Omega\subset\mathbb{R}^d$, $u$ is a $C^\infty$ function in every open set if $\mathcal {L}u$ is a $C^\infty$ function there. Moreover, the parabolic counterpart of \eqref{Lu} with $X_0=\partial_t$ is also
hypoelliptic. In recent years, more general classes of linear operators with variable coefficients structured on H\"{o}rmander vector fields, namely
\[\mathcal {L}=\sum^q_{i,j=1}a_{ij}(x)X_iX_j+a_0(x)X_0+\sum^q_{i=1}b_i(x)X_i+c(x),\]
have also come under study, we can refer to \cite{a0,Bramanti}.

Existing literature contains extensive theoretical studies on H\"{o}rmander operators, and  a priori Schauder estimates related to our work have been widely investigated by scholars.
Roughly speaking, Schauder estimates state that, given a solution $u$ to the inhomogeneous equation \(\mathcal{H}u = f\), if both the coefficients of the operator \(\mathcal{H}\) and the inhomogeneous term $f$ are H\"{o}lder continuous, then this regularity is propagated by the operator, thereby endowing the derivatives of the solution with H\"{o}lder continuity.

Notably, the global theory  is inevitably missing for the general H\"{o}rmander operators, since one cannot expect that the operators can be equipped with a global fundamental solution defined out of the diagonal of $\mathbb{R}^d\times\mathbb{R}^d$ , without further assumptions on operators.
Xu \cite{Xu90, Xu92} established a priori interior estimates for subelliptic operators. Based on properties of
the ``heat kernel", Bramanti and Brandolini \cite{Bramanti} established local a priori Schauder estimates for the parabolic counterpart.  

In some special cases, the local Schauder estimates can be derived to global Schauder estimates. For the homogeneous left invariant H\"{o}rmander operators on homogeneous groups, Folland \cite{Folland} developed the global theory.
Furthermore, within the framework of Carnot groups, owing to the fact that H\"{o}rmander operators possess translation invariance and homogeneous dilation invariance, we refer to 
Bonfiglioli et al. \cite{BL04}, Guti\'{e}rrez and Lanconelli \cite{GL} 
and references therein.

Unlike the existing works, this paper focuses on a class of H\"{o}rmander operators without presupposing the existence of any group structure, and we provide an alternative proof method for a priori Schauder estimates for degenerate parabolic equations via Campanato spaces, which is not based on the representation of fundamental solutions.

MFG theory is devoted to the analysis of differential games with infinitely many players. This theory has been introduced by Lasry and Lions in 2006 \cite{L06,LL06}. At about the same time, Huang et al. \cite{HM06} solved the large population games independently. Then MFG has been studied extensively in many different fields. Bensoussan et al. \cite{BF13} studied the MFG and mean field type control theory. Carmona and Delarue \cite{CD13} focused on the theory and applications of MFG by probabilistic approach. Gangbo \cite{GS14} developed optimal transport theory within the MFG framework.  Gomes et al. \cite{GP16} discussed regularity theory for MFG system either stationary or time-dependent, local or nonlocal.  Cardaliaguet et al. \cite{CD19} obtained  the existence of classical solutions for the master equation of MFG. The notes written by Cardaliaguet \cite{C12} and by Ryzhik \cite{R18} showed the more comprehensive analysis of the MFG.

Degenerate MFG systems are more common than classical MFG systems, scholars studied the suitably defined weak solutions for general form of degenerate MFG systems as follows
  \begin{equation}\label{mfg1}
\left\{
  \begin{aligned}
    &-\partial_t u-{\rm{tr}}(\sigma\sigma^{T}(x)D^2 u)+H(x,Du)=F(x,m), & (t,x)\in [0,T)\times\mathbb{R}^d, \\
    &\partial_t m-{\rm{div}}\big({\rm{div}}(\sigma\sigma^{T}(x)m)+mD_pH(x,Du)\big)=0,& (t,x)\in(0,T]\times\mathbb{R}^d, \\
   & u(T,x)=G(x,m_T),\  m(0,x)=m_0, &x\in\mathbb{R}^d.
  \end{aligned}
\right.
\end{equation}
 Cardaliaguet et al. \cite{CG15} obtained the existence and uniqueness of weak solutions for the MFG systems  with degenerate diffusion and local coupling. Then Ferreira et al. \cite{FG21} extended the results to a wide class of time-dependent degenerate systems. 
In order to investigate more regularity beyond weak solutions, the hypoelliptic operator is introduced into degenerate MFG systems.

The hypoelliptic MFG was addressed by Dragoni and Feleqi \cite{DF18}. They studied an ergodic system with H\"{o}rmander diffusions  as follows
\begin{equation*}
\left\{
  \begin{aligned}{ll}
    &-\Delta_\mathcal {X} u+\lambda u+H(x,Xu)=F(m), & x\in\mathbb{T}^d, \\
    &-\Delta_\mathcal {X} ^*m-{\rm{div}}_{\mathcal {X}^*}(mD_pH(x,Xu))=0,& x\in
    \mathbb{T}^d,\\
    & \int_{\mathbb{T}^d}mdx=1, \ m>0,
     \end{aligned}
\right.
\end{equation*}
and obtained a unique classical solution in the intrinsic spaces.
Their interior regularity for subelliptic equation given by  Xu and Zuily \cite{XZ97} and references therein.
Then Feleqi et al. \cite{FG20} showed regularities for the hypoelliptic MFG system based on the theory of eigenvalue problems.

The degenerate parabolic  MFG systems of H\"{o}rmander type, despite its great relevance from the applicative viewpoint, has been less investigated. Mannucci et al. \cite{MMM24}  obtained the classical solutions to the following MFG system
 \begin{equation*}
\left\{
  \begin{aligned}
    &-\partial_t u-\lambda\Delta_\mathcal {X} u+H(x,Xu)=F(x,m), & (t,x)\in[0,T)\times\mathbb{R}^d, \\
    &\partial_t m-\lambda\Delta_\mathcal {X}m-{\rm{div}}_{\mathcal {X}}(mD_pH(x,Xu))=0,& (t,x)\in(0,T]\times\mathbb{R}^d, \\
    & u(T,x)=G(x,m_T),\  m(0,x)=m_0, &x\in\mathbb{R}^d,
  \end{aligned}
\right.
\end{equation*}
 with superlinear growth of the Hamiltonian defined on an homogeneous Lie group, $\lambda>0$.
Their result provided the regularity of solutions, which relies on the homogeneity of the vector fields and the representation of the fundamental solutions. Some available results on degenerate parabolic equations can refer to \cite{FL95,BL04,BB10}.

To the best of our knowledge, for parabolic MFG systems with H\"{o}rmander diffusions,there exist no directly applicable results for the associated degenerate parabolic equations without recourse to fundamental solutions. In particular, the corresponding a priori estimates are absent in the existing literature.

\subsection{ Main assumptions and main results}
Throughout this paper, $C$ is a generic positive constant which may take different values in different contexts.  The main results proved in this paper are divided into two aspects. The first aspect is related to the degenerate parabolic equation \eqref{peq} associated with vector fields $\mathcal {X}$ given in \eqref{sigma}.
Now we make the hypotheses for the coefficients of the vector fields $\mathcal {X}$ as follows:

\noindent{\bf{(H1)}} The matrix-valued coefficient $\sigma(x)$ is allowed to be rank-degenerate but non-vanishing, and all elements $\{\sigma^{ik}(x)\}$ are smooth function for any $i\in\{1,\ldots,d\}$, $k\in\{1,\ldots,q\}$ and $x\in\mathbb{T}^d$, such that
the vector fields $\mathcal {X}$ satisfy the H\"{o}rmander's condition.

Without loss of generality we assume that these vector fields $\mathcal {X}$ are free up to the order $q$ (see Definition \ref{FU}).

First, we give  a priori global Schauder estimates for the degenerate parabolic equation \eqref{peq} on $\mathbb{T}^d$ induced by the general H\"{o}rmander vector fields as follows.
\begin{Prop}\label{prop13} Given the vector fields $\mathcal {X}$ defined in \eqref{sigma} and satisfying the hypotheses {\bf{(H1)}}, assume that the coefficients  $c,f\in C^{0,\alpha}_{\mathcal {X}}\big((0,T]\times\mathbb{T}^d\big)$, $g\in C^{2,\alpha}_{\mathcal {X}}\big(\mathbb{T}^d\big)$. If $u\in C_{\mathcal {X}}^{2,\alpha}([0,T]\times\mathbb{T}^d)$ is a solution of the equation \eqref{peq}, then there exists a positive constant $C$ such that
\begin{equation}\label{3.35}
\|u\|_{C^{2,\alpha}_{\mathcal {X}}([0,T]\times\mathbb{T}^d)}\leq C\big(\|u\|_{L^{\infty}([0,T]\times\mathbb{T}^d)}+\|f\|_{C^{0,\alpha}_{\mathcal {X}}([0,T]\times\mathbb{T}^d)}+\|g\|_{C^{2,\alpha}_{\mathcal {X}}(\mathbb{T}^d)}\big).
\end{equation}
\end{Prop}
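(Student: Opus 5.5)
Our plan follows the route announced in the introduction: Campanato characterisation of intrinsic Hölder spaces plus a freezing/perturbation argument, without fundamental solutions. We work in the parabolic Carnot–Carathéodory geometry, with cylinders $Q_r(t_0,x_0)=(t_0-r^2,t_0]\times B_{\mathcal X}(x_0,r)$ built from the control distance of $\mathcal X$. Since $\mathcal X$ satisfies Hörmander's condition and is free up to order $q$, these balls satisfy the doubling property on $\mathbb T^d$, uniformly in the centre by compactness. Hence $(\,[0,T]\times\mathbb T^d, d_{\mathcal X},dt\,dx)$ is a space of homogeneous type. The first step is a Campanato-type lemma: a function $v$ belongs to $C^{0,\alpha}_{\mathcal X}$ iff
\[
\sup_{z_0,r}\ r^{-\alpha}\Big(\frac{1}{|Q_r(z_0)|}\int_{Q_r(z_0)}|v-v_{Q_r(z_0)}|^2\Big)^{1/2}<\infty,
\]
with equivalent seminorms. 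This is proved as in the Euclidean case, using doubling and the geometric decay of the averages over dyadic cylinders.

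The core step is a decay estimate for the model equation with $c\equiv0$ and $f\equiv0$. If $\partial_t w-\Delta_{\mathcal X}w=0$ in $Q_R$, we use Hörmander hypoellipticity together with the local (interior) estimates of Bramanti–Brandolini type. These control $\sup_{Q_{R/2}}|X_iX_jw|+|\partial_t w|$ and one more intrinsic derivative by $R^{-2}$ (resp. $R^{-3}$) times the $L^2$ average of $w$ over $Q_R$; the constants are uniform on $\mathbb T^d$ by compactness and the smoothness of $\sigma$. Combined with a Poincaré inequality for Hörmander vector fields (Jerison), this gives, for $\rho<R$ and $D^2_{\mathcal X}w=(X_iX_jw,\partial_tw)$,
\[
\int_{Q_\rho}|D^2_{\mathcal X}w-(D^2_{\mathcal X}w)_{Q_\rho}|^2\le C\Big(\frac{\rho}{R}\Big)^{Q+2+2}\int_{Q_R}|D^2_{\mathcal X}w-(D^2_{\mathcal X}w)_{Q_R}|^2,
\]
where $Q$ is the local homogeneous dimension.

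For the solution $u$ we then proceed by comparison. Write $u=w+v$ on $Q_R$, where $w$ solves the homogeneous problem with the boundary data of $u$ and $v$ has zero data with right-hand side $f-cu-(f-cu)_{Q_R}$ plus a correction absorbing the constant. We estimate $\int_{Q_R}|D^2_{\mathcal X}v|^2\le C R^{Q+2+2\alpha}[f-cu]_{\alpha}^2$. This bound uses an $L^2$ estimate $\|X_iX_jv\|_{L^2}\le C\|\cdot\|_{L^2}$, the Rothschild–Stein/Folland-type $L^2$ estimate, valid because free-up lifting yields a local group model. A standard iteration lemma then gives the Campanato bound for $D^2_{\mathcal X}u$, hence $[D^2_{\mathcal X}u]_\alpha\le C([f]_\alpha+\|c\|_\alpha\|u\|_{C^{0,\alpha}_{\mathcal X}})$ away from $t=0$. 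Near $t=0$ we subtract $g$, setting $\tilde u=u-g$, which solves the same type of equation with zero initial datum and right-hand side $f+\Delta_{\mathcal X}g-cg\in C^{0,\alpha}_{\mathcal X}$. We extend $\tilde u$ by zero for $t<0$ and run the same argument on the cylinders crossing $t=0$; this produces the term $\|g\|_{C^{2,\alpha}_{\mathcal X}}$. Finitely many cylinders cover $\mathbb T^d$ by compactness, giving a global bound.

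Finally we absorb the lower-order terms. Interpolation inequalities in intrinsic Hölder spaces give, for all $\varepsilon>0$, $\|u\|_{C^{0,\alpha}_{\mathcal X}}+\|Xu\|_{C^{0,\alpha}_{\mathcal X}}\le\varepsilon[u]_{C^{2,\alpha}_{\mathcal X}}+C_\varepsilon\|u\|_{L^\infty}$. These follow from a Taylor-type formula along horizontal curves and the connectivity (Chow) property. Choosing $\varepsilon$ small yields \eqref{3.35}. The main obstacle is the decay estimate for the frozen problem together with the $L^2$ estimate for $v$, with constants uniform in the centre. Since there is no group structure, we would invoke the Rothschild–Stein lifting-and-approximation theorem, which is where the free-up hypothesis enters, and then control the error between $\mathcal X$ and its nilpotent approximation as a lower-order perturbation on small cylinders.
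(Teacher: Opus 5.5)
Your skeleton (Campanato characterisation, comparison $u=w+v$, decay for the homogeneous part, Lemma \ref{A4}, covering, interpolation) is the paper's. Working directly on the compact manifold $\mathbb{T}^d$ is cleaner than the paper's detour through $\mathbb{R}^d$ and the Feynman--Kac Lemma \ref{periodic}. But the decisive step, the decay estimate for $w$ with the oscillation on both sides, is asserted rather than derived, and the route you indicate does not give it. Interior sup bounds for $X^3w$ and $\partial_tX^2w$ (by $R^{-3}$, resp.\ $R^{-4}$, times $(R^{-Q-2}\int_{Q_R}w^2)^{1/2}$), combined with Jerison's Poincar\'e inequality, yield only
\[
\int_{Q_\rho}|X^2w-(X^2w)_{Q_\rho}|^2\le C\Big(\frac{\rho}{R}\Big)^{Q+4}R^{-4}\int_{Q_R}w^2 .
\]
This has $w$ itself on the right, whereas the comparison argument and Lemma \ref{A4} need $\int_{Q_R}|X^2w-(X^2w)_{Q_R}|^2$ there. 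In the Euclidean proof this replacement is free because $D^2w-c$ is again caloric. For H\"ormander fields $X_iX_jw$ is not a solution, which is precisely the non-commutativity the paper singles out. The missing ingredient is a corrector: a solution $h$ of $\partial_th-\Delta_{\mathcal X}h=0$ with $X_iX_jh=(X_iX_jw)_{Q_R}$ for all $i,j$ (and a first-order one with $X_ih=(X_iw)_{Q_R}$), so that the sup/Poincar\'e chain can be applied to $w-h$. This is what the paper's Proposition \ref{X2uuR} does with the polynomials of Lemma \ref{xue2cor22}, in the coordinates of Lemma \ref{xue2lem22}. Your closing appeal to Rothschild--Stein lifting does not say what it is supposed to deliver. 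On the free nilpotent model you would have to build $h$ from homogeneous polynomials of degree at most $2$ plus $t\sum_ic_{ii}$, using the second-layer coordinates to match the antisymmetric part of $(c_{ij})$ (the mean of $[X_i,X_j]w$). You would then have to show that the error with respect to $\mathcal X$ is of higher order uniformly in $R$. Note also that compactness of $\mathbb{T}^d$ gives uniformity of constants in the centre, not in the radius, so that uniformity must come from the lifting as well.

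Second, the treatment of $t=0$ fails as written. If you extend $\tilde u=u-g$ by zero for $t<0$, the extension $\bar u$ satisfies $\partial_t\bar u-\Delta_{\mathcal X}\bar u=(\tilde f-c\tilde u)\mathbf{1}_{\{t>0\}}$ with $\tilde f=f+\Delta_{\mathcal X}g-cg$. Since $\tilde u(0,\cdot)=0$ forces $X^2\tilde u(0,\cdot)=0$, $\partial_t\bar u$ jumps across $\{t=0\}$ from $0$ to $\tilde f(0,\cdot)=\partial_tu(0,\cdot)$, which is generically nonzero. On cylinders centred on $\{t=0\}$ the right-hand side has mean oscillation of order $|\tilde f(0,x_0)|^2R^{Q+2}$. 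So your comparison step gives only the exponent $Q+2$ instead of $Q+2+2\alpha$, and $D^2_{\mathcal X}\bar u$, which contains $\partial_t\bar u$, is not H\"older there at all. You have two options. One is to work near the bottom with half-cylinders $Q^0_R$, exploiting that for solutions vanishing at $t=0$ one has $\int_{Q^0_\rho}|X^2v|^2\le C(\rho/R)^{Q+4}\int_{Q^0_R}|X^2v|^2$ without subtracting means; this is what the paper does in Propositions \ref{0Ca20}, \ref{X2uuR0}, \ref{pro2} and \ref{311}. The other is to subtract $\tilde f(0,x_0)\,t_+$, which satisfies $X(t_+)=0$, run the Campanato argument for $X_iX_j\bar u$ only, and recover $\partial_tu$ from the equation on $t\ge0$. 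Relatedly, your bound for $\int_{Q_R}|D^2_{\mathcal X}v|^2$ over the whole cylinder is an estimate up to the lateral boundary of a Carnot--Carath\'{e}odory ball. The interior Rothschild--Stein estimates do not provide it, and it is delicate near characteristic boundary points. The paper only needs $X^2u_2$ on $Q_{R/2}$, controlled by the Caccioppoli inequality of Proposition \ref{Ca31} together with the energy bound of Proposition \ref{pro1}, and you should do the same.
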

Here we emphasize that a priori Schauder estimates above are established via Campanato spaces, instead of employing the fundamental solution framework.
\begin{Rem}
If we consider the equation \eqref{peq} posed on \([0,T]\times\mathbb R^d\), without assuming periodicity for the coefficients, then the global Schauder estimates cannot be obtained. This arises from the absence of global fundamental solutions for general H\"{o}rmander  operators without any group structure. Consequently, we are only able to derive the local Schauder estimates given by \eqref{aij}, which appear in the proof of Proposition \ref{prop13}, that is, for  any $\Omega'\Subset \Omega$, $\Omega$ is a open subset of $\mathbb{R}^d$,
\begin{equation}\label{locales}
 \|u\|_{C^{2,\alpha}_{\mathcal {X}} ([0,T]\times\Omega')}\leq C\big(\|u\|_{L^{\infty}([0,T]\times\Omega)}+\|f\|_{C^{0,\alpha}_{\mathcal {X}} ([0,T]\times\Omega)}\big),
\end{equation}
where $C$ is a constant depending on $\Omega,\Omega',\mathcal {X},\alpha$ and $C^{0,\alpha}_{\mathcal {X}}$ norms of the coefficients.

 Furthermore, if we assume that all coefficients of H\"{o}rmander vector fields and the equation \eqref{peq} are periodic, then we obtain  the global Schauder estimates as follows
 \begin{equation}\label{globalrd}
 \|u\|_{C^{2,\alpha}_{\mathcal {X}} ([0,T]\times\mathbb{R}^d)}\leq C\big(\|u\|_{L^{\infty}([0,T]\times\mathbb{R}^d)}+\|f\|_{C^{0,\alpha}_{\mathcal {X}} ([0,T]\times\mathbb{R}^d)}\big).
\end{equation}

Deriving the global estimate \eqref{globalrd} from the local estimate \eqref{locales} requires the periodicity of solutions $u$ given by Lemma \ref{periodic} and the translation invariance of the distance $d_c$ given by the claim \eqref{dk}.
\end{Rem}
\begin{Rem}
To obtain the global Schauder estimates \eqref{3.35} on $[0,T]\times\mathbb{T}^d$, we first establish the global  estimate \eqref{globalrd} under the assumption that equation \eqref{peq} is defined on \([0,T]\times\mathbb R^d\) with all coefficients periodic. The derivation from \eqref{globalrd} to \eqref{3.35} also relies on the periodicity of solutions $u$ given by Lemma \ref{periodic} and the translation invariance of the distance $d_c$ given by the claim \eqref{dk}.
\end{Rem}
\begin{Rem}
Note that condition \eqref{con2} satisfied by the diffusion matrix \(\sigma(x)\) is only a condition relating the SDE \eqref{SDE} to the MFG system \eqref{MFG} characterized by the sum-of-squares operator, and is not imposed as an assumption for analyzing the wellposedness of the MFG system \eqref{MFG}.
\end{Rem}
Next, we obtain the existence and uniqueness of the equation  \eqref{peq} in intrinsic H\"{o}lder space. The proof is a blend of the the continuity method and modification method.
\begin{Thm}\label{eu}
Under the same assumptions of Proposition \ref{prop13}, there exists a unique solution $u$ of the equation \eqref{peq}.
 Moreover, $u\in C^{2,\alpha}_{\mathcal {X}}([0,T]\times\mathbb{T}^d)$, and satisfies
\begin{equation}\label{21.2}
  \|u\|_{C^{2,\alpha}_{\mathcal {X}}([0,T]\times\mathbb{T}^d)}\leq C.
\end{equation}
\end{Thm}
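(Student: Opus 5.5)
The proof combines the a priori estimate of Proposition \ref{prop13} with a maximum-principle bound and the method of continuity. Throughout, \eqref{peq} is considered in the Banach space $C^{2,\alpha}_{\mathcal X}([0,T]\times\mathbb{T}^d)$.

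\textbf{Reduction to zero data and a bounded zeroth-order term.} First I remove the initial datum. Setting $w=u-g$, the problem becomes $\mathcal H w=f-\mathcal H g=:\tilde f$ with $w(0,\cdot)=0$. Since $g\in C^{2,\alpha}_{\mathcal X}(\mathbb{T}^d)$ and $c\in C^{0,\alpha}_{\mathcal X}$, we have $\tilde f\in C^{0,\alpha}_{\mathcal X}$.

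Next I deal with the sign of $c$, which is the ``modification'' step. Write $w=e^{\lambda t}v$ with $\lambda\geq \|c\|_{L^\infty}+1$. Then $v$ solves
\[
\partial_t v-\Delta_{\mathcal X}v+(c+\lambda)v=e^{-\lambda t}\tilde f ,
\]
and the new zeroth-order coefficient satisfies $c+\lambda\geq 1$.

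\textbf{$L^\infty$ bound.} For this modified operator the weak maximum principle gives
\[
\|v\|_{L^\infty}\leq \|e^{-\lambda t}\tilde f\|_{L^\infty}.
\]
The argument is elementary. At an interior (in time) maximum point of $v$ on the compact set $[0,T]\times\mathbb{T}^d$ we have $\partial_t v\geq 0$ and $X_k v=0$. We also have $X_k^2v\leq 0$, because $X_k^2v$ is the second derivative of $v$ along the integral curve of $X_k$. The only boundary is $t=0$, where $v=0$. Combining this with Proposition \ref{prop13}, every solution of the family below obeys a uniform bound $\|v\|_{C^{2,\alpha}_{\mathcal X}}\leq C\|\tilde f\|_{C^{0,\alpha}_{\mathcal X}}$. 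This already gives uniqueness, since the difference of two solutions has $f=0$ and $g=0$, and it gives \eqref{21.2} after undoing the transformations.

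\textbf{Continuity method.} Let $\mathcal H_0=\partial_t-\Delta_{\mathcal X}+1$ and $\mathcal H_1=\partial_t-\Delta_{\mathcal X}+c+\lambda$, and set $\mathcal H_\tau=(1-\tau)\mathcal H_0+\tau\mathcal H_1$. The zeroth-order coefficient of $\mathcal H_\tau$ is at least $1$ for every $\tau\in[0,1]$. Each $\mathcal H_\tau$ maps the closed subspace $\{v\in C^{2,\alpha}_{\mathcal X}: v(0,\cdot)=0\}$ boundedly into $C^{0,\alpha}_{\mathcal X}$, and the a priori estimate above is uniform in $\tau$. By the standard continuity method, solvability for $\tau=1$ follows from solvability for $\tau=0$.

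\textbf{Solvability at $\tau=0$: the main obstacle.} The remaining task is to solve
\[
\partial_t v-\Delta_{\mathcal X}v+v=h,\qquad v(0)=0,
\]
in $C^{2,\alpha}_{\mathcal X}$ without a fundamental solution. I would proceed in three steps.

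1. Regularize by adding $\varepsilon\Delta$, where $\Delta$ is the Euclidean Laplacian, and mollify $h$. The resulting uniformly parabolic periodic problem has a classical solution $v_\varepsilon$ by standard theory.

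2. Obtain uniform bounds. Since $-\Delta_{\mathcal X}-\varepsilon\Delta$ is again a sum of squares of vector fields, namely $\mathcal X$ together with $\sqrt\varepsilon\,\partial_{x_i}$, this enlarged family still satisfies the H\"ormander condition. The intrinsic distance for the enlarged family is dominated by $d_c$, so its Campanato-based constants can be made uniform in $\varepsilon$. The $L^\infty$ bound is uniform as well.

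3. Pass to the limit. Arzel\`a--Ascoli gives convergence of $v_\varepsilon$ to a solution $v\in C^{2,\alpha}_{\mathcal X}$.

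The delicate point is verifying that the constant in Proposition \ref{prop13} does not degenerate as $\varepsilon\to0$. If that fails, I would instead construct a weak solution by a Galerkin or Lax--Milgram argument in the subelliptic space associated with $X u\in L^2$, and then upgrade its regularity via the local estimate \eqref{locales} combined with hypoellipticity.
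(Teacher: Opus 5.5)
Your outline matches the paper's: reduce to $g=0$, combine Proposition~\ref{prop13} with a continuity argument starting from $\partial_t-\Delta_{\mathcal{X}}$, and get the base case by approximation. Your exponential shift plus elementary maximum principle is a real improvement. Proposition~\ref{prop13} keeps $\|u\|_{L^\infty}$ on the right, and the paper's proof simply drops it (the ``$\le C$'' and the ``Cauchy sequence'' in Step~1, and \eqref{xue22126} in Step~2) without any uniqueness argument. Your argument at a maximum with $t>0$ is valid for $C^{2,\alpha}_{\mathcal{X}}$ solutions on the compact torus, despite the remark following Lemma~\ref{periodic}. It gives uniqueness and, applied to differences, the Cauchy property the paper only asserts.

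The gap is solvability at $\tau=0$, which you leave conditional, and your preferred route does not work as stated. The inference ``the enlarged distance is dominated by $d_c$, so the Campanato constants are uniform'' does not follow. The proof of Proposition~\ref{prop13} rests on three things that are not uniform in $\varepsilon$:
a fixed homogeneous dimension ($|B_R|\approx R^Q$ in \eqref{354}, the factor $R^{|I|-Q/2}$ in \eqref{equation1}, the exponents $Q+4$ and $Q+2+2\alpha$ in Propositions~\ref{X2uuR}, \ref{729} and Lemma~\ref{A3});
the free normal form of Lemmas~\ref{xue2lem22}--\ref{xue2cor22} used for the polynomial correctors;
and Rothschild--Stein maximal regularity for the given $q$ fields.
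The family $\mathcal{X}_\varepsilon=\mathcal{X}\cup\{\sqrt\varepsilon\,\partial_{x_i}\}_{i\le d}$ is not free. Its balls grow like $R^d$ (with an $\varepsilon$-dependent factor) for $R\lesssim\sqrt\varepsilon$, and like $R^Q$ only above an $\varepsilon$-dependent scale. So $\varepsilon$-uniform Schauder bounds for such Riemannian approximations are a separate theorem, not a corollary of Proposition~\ref{prop13}. Moreover $d_\varepsilon\le d_c$ points the wrong way on the data side, since it gives $[h]_{C^{0,\alpha}_{\mathcal{X}_\varepsilon}}\ge[h]_{C^{0,\alpha}_{\mathcal{X}}}$.

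Your fallback is incomplete at $t=0$. Both \eqref{locales} and Proposition~\ref{311} presuppose $C^{2,\alpha}_{\mathcal{X}}$ regularity, while hypoellipticity yields smoothness only in the open set $\{0<t<T\}$ and only for smooth $h$. The paper's Step~1 has the same hole: H\"ormander's theorem is a regularity statement and does not produce the smooth solutions $u_n$ it invokes.

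To close the base case you need no uniform intrinsic estimates, only a classical solution for each smooth $h$. In your regularization, Euclidean $C^k$ bounds on $v_\varepsilon$, uniform in $\varepsilon$, follow from the Bernstein/maximum-principle method. The commutators $[\partial_{x_l},X_k]=\sum_i\partial_{x_l}\sigma^{ik}\,\partial_{x_i}$ only produce terms bounded by $\tfrac12|X\nabla v|^2+C|\nabla v|^2$. The first is absorbed by the good term $-|X\nabla v|^2$, and the second by an exponential weight in $t$. Alternatively, use Feynman--Kac with the smooth stochastic flow, as in Lemma~\ref{periodic}.

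Then Proposition~\ref{prop13} and your maximum principle give $\|v_n\|_{C^{2,\alpha}_{\mathcal{X}}}\le C\|h_n\|_{C^{0,\alpha}_{\mathcal{X}}}$ and $\|v_n-v_m\|_{L^\infty}\le\|h_n-h_m\|_{L^\infty}$. Hence $v_n$ converges to a $C^{2,\alpha}_{\mathcal{X}}$ solution, provided the smooth $h_n$ converge uniformly to $h$ with $\sup_n\|h_n\|_{C^{0,\alpha}_{\mathcal{X}}}<\infty$. That requirement still needs its own argument, in your proposal as in the paper. The reason is that $d_c$ is not invariant under Euclidean translations, so plain mollification does not obviously preserve the intrinsic H\"older norm.
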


The second aspect concerns the MFG systems \eqref{MFG} with H\"{o}rmander diffusions. We obtain the well-posedness of the degenerate MFG systems \eqref{MFG} by using the regularity properties established above. Now we list the notations for MFG theory as follows.

Let $\mathcal {P}(\mathbb{T}^d)$ be the set of Borel probability measures $m$ on $\mathbb{T}^d$ such that $\int_{\mathbb{T}^d}|x|dm(x)<\infty$, and endowed with the Kantorovitch-Rubinstein distance
\begin{equation}\label{distance}
 d_1(\mu,\nu):=\inf_{\gamma\in\Pi(\mu,\nu)}\int_{\mathbb{T}^{2d}}|x-y|d\gamma(x,y),
\end{equation}
where $\Pi(\mu,\nu)$ is the set of Borel probability measures on $\mathbb{T}^{2d}$ such that $\gamma(E\times\mathbb{T}^d)=\mu(E)$,
 and $\gamma(\mathbb{T}^d\times E)=\nu(E)$ for any Borel set $E\subset\mathbb{T}^d$.

 Let $\mathcal {C}$ be the set of maps $\mu\in C([0,T];\mathcal {P}(\mathbb{T}^d))$ such that $\sup\limits_{t\in[0,T]}\int_{\mathbb{T}^d}|x|^2d\mu(x)\leq C$,
\begin{equation}\label{eq297}
\sup_{s\neq t}\big\{|t-s|^{-\frac{1}{2}}d_1(\mu_s,\mu_t)\big\}\leq C.
\end{equation}
Then $\mathcal {C}$ is a convex closed subset of $C([0,T];\mathcal {P}(\mathbb{T}^d))$. In fact, $\mathcal {C}$ is compact in $\mathcal {P}(\mathbb{T}^d)$. More properties of the distance $d_1$ and the space of measures see Chapter 5 in  \cite{C12}.

The standard hypotheses in intrinsic H\"{o}lder spaces are required as follows.

\noindent{\bf(H2)}  The probability measure $m_0$ is absolutely continuous with respect to Lebesgue measure, and has a H\"{o}lder continuous density (still denoted $m_0$), which satisfies $\int_{\mathbb{T}^d}|x|^2 dm_0<\infty$.

\noindent{\bf(H3)} The functions $F(x,m)$ and $G(x,m_T)$ are Lipschitz continuous in $\mathbb{T}^d\times\mathcal {P}(\mathbb{T}^d)$, that is for any $(x,\mu),(y,\nu)\in\mathbb{T}^d\times\mathcal {P}(\mathbb{T}^d)$,

{\rm{(H3.1)}} $|F(x,\mu)-F(y,\nu)|\leq C \big(d_c(x,y)+d_1(\mu,\nu)\big)$;

{\rm{(H3.2)}} $|G(x,\mu_T)-G(y,\nu_T)|\leq C\big(d_c(x,y)+d_1(\mu_T,\nu_T)\big)$,\\
where $d_c$ is given in Definition \ref{2.6} below.


\noindent{\bf(H4)} The functions $F(x,m)$ and $G(x,m_T)$ are monotonically increasing with respect to  the measure $m$, that is for any $\mu,\nu\in\mathcal {P}(\mathbb{T}^d)$,

{\rm{(H4.1)}} $\int_{\mathbb{T}^d}\left(F(x,\mu)-F(x,\nu)\right)d(\mu-\nu)(x)\geq 0$, $\mu\neq\nu$;

{\rm{(H4.2)}} $\int_{\mathbb{T}^d}\left(G(x,\mu_T)-G(x,\nu_T)\right)d(\mu_T-\nu_T)(x)\geq 0$.

\begin{Rem}
All hypotheses above for the degenerate MFG system \eqref{MFG} are consistent with those in the classical MFG systems, while the corresponding distances and spaces are induced by H\"{o}rmander vector fields $\mathcal {X}$. For standard hypotheses of classical case, one can refer to \cite{C12} .
\end{Rem}

The MFG system \eqref{MFG} consists of the HJE \eqref{HJE} with  the quadratic Hamiltonian and the FPE \eqref{FPE} with the dual operator.
 For the quasi-linear HJE \eqref{HJE}, we use the Hopf-Cole transform, setting $$w(t,x)=e^{-\frac{u(T-t,x)}{2}}, \quad \text{for\ any\ }(t,x)\in[0,T]\times\mathbb{T}^d.$$
It is clear that $u$ is a solution for the quasi-linear equation  \eqref{HJE},
 if and only if $w$ is a positive solution for the following linear equation
\begin{equation}\label{2.0}
\left\{
  \begin{aligned}
   & \partial_t w-\Delta_{\mathcal {X}}w+\frac{1}{2}F w=0,&(t,x)\in (0,T]\times\mathbb{T}^d, \\
   & w(0,x)=e^{-\frac{G(x,\bar{m}_T)}{2}},& x\in\mathbb{T}^d.
  \end{aligned}
\right.
\end{equation}
Here we ingeniously combine the weak maximum principle to give a positive lower bound for  solutions $w$, which constitutes a crucial ingredient for applying  the existence results of the linear equation \eqref{peq} to quasi-linear equations. 

As a consequence of Theorem \ref{eu}, we can obtain the existence and uniqueness of the solution to the HJE \eqref{HJE} in the intrinsic H\"{o}lder space.
\begin{Prop}\label{HJEPF}
Under assumptions {\bf{(H1)}}-{\bf{(H4)}}, there exists a unique solution $u$ of the equation \eqref{HJE}.  Moreover, $u\in C^{2,\alpha}_{\mathcal {X}}([0,T]\times\mathbb{T}^d)$, and satisfies
\begin{equation}\label{uhje}
  \|u\|_{C^{2,\alpha}_{\mathcal {X}}([0,T]\times\mathbb{T}^d)}\leq C.
\end{equation}
\end{Prop}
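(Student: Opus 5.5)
The plan is to reduce the statement to Theorem \ref{eu} through the Hopf--Cole transform already introduced. Fix $\bar m\in\mathcal C$ and set $\tilde F(t,x):=F(x,\bar m_{T-t})$ and $\tilde g(x):=e^{-G(x,\bar m_T)/2}$. I work with the linear problem \eqref{2.0}: $\partial_t w-\Delta_{\mathcal X}w+\frac12\tilde F w=0$, $w(0,\cdot)=\tilde g$. This is of the form \eqref{peq} with $c=\frac12\tilde F$, $f=0$ and $g=\tilde g$.

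\textbf{Step 1: regularity of the data.} By (H3.1), $\tilde F$ is Lipschitz in $x$ with respect to $d_c$. The H\"older bound \eqref{eq297} gives $d_1(\bar m_s,\bar m_t)\le C|t-s|^{1/2}$, so $\tilde F$ is $\frac12$-H\"older in $t$. This places $\tilde F$ in $C^{0,\alpha}_{\mathcal X}$ for $\alpha<1$, taking the parabolic scaling (time counts as distance squared) into account. For the initial datum, Theorem \ref{eu} requires $\tilde g\in C^{2,\alpha}_{\mathcal X}(\mathbb T^d)$. Assumption (H3.2) only gives Lipschitz regularity, so I would implicitly assume, as in the classical setting, that $G(\cdot,m)$ is bounded in $C^{2,\alpha}_{\mathcal X}$ uniformly in $m$. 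Alternatively, one can handle the initial layer with the interior estimate \eqref{locales}. Granting this, Theorem \ref{eu} gives a unique $w\in C^{2,\alpha}_{\mathcal X}$ with $\|w\|_{C^{2,\alpha}_{\mathcal X}}\le C$.

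\textbf{Step 2: positive lower bound for $w$ (the key step).} Since $F$ and $G$ are continuous on the compact set $\mathbb T^d\times\mathcal P(\mathbb T^d)$, we have $|\tilde F|\le M$ and $\tilde g\ge e^{-\|G\|_\infty/2}=:\delta>0$. Set $v:=e^{Mt/2}w$. Then
\[
\partial_t v-\Delta_{\mathcal X}v+\tfrac12(\tilde F+M)v=0,\qquad \tfrac12(\tilde F+M)\ge0 .
\]
Apply the weak maximum principle to $-v$ on the torus. This works because $\Delta_{\mathcal X}$ at an interior minimum is nonnegative: at a minimum point $X_k v=0$ and $X_k^2v\ge0$ along integral curves of $X_k$. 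It gives $v\ge\min\{\delta,0\}$, so $v\ge0$, and a refined comparison yields $v\ge\delta$. Hence $w\ge\delta e^{-MT/2}>0$. The same principle gives the upper bound $w\le e^{MT/2}\|\tilde g\|_\infty$, which controls $\|w\|_{L^\infty}$ in \eqref{3.35}.

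\textbf{Step 3: back to $u$.} Define $u(t,x):=-2\log w(T-t,x)$. Because $w$ is bounded between two positive constants and belongs to $C^{2,\alpha}_{\mathcal X}$, composing with the smooth function $-2\log$ on $[\delta e^{-MT/2},\infty)$ keeps $u$ in $C^{2,\alpha}_{\mathcal X}$ with a bound depending only on the constants above. This follows from the chain rule $X_ku=-2X_kw/w$ and $X_jX_ku=-2X_jX_kw/w+2X_jwX_kw/w^2$, together with the algebra property of intrinsic H\"older spaces. A direct computation shows $u$ solves \eqref{HJE} with $H=\frac12|Xu|^2$, which gives \eqref{uhje}.

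\textbf{Step 4: uniqueness.} Any $C^{2,\alpha}_{\mathcal X}$ solution $u$ of \eqref{HJE} produces a positive solution $w=e^{-u(T-t)/2}$ of \eqref{2.0}. Uniqueness in Theorem \ref{eu} then forces uniqueness of $u$. (Alternatively, take the difference of two solutions: it satisfies a linear equation with a bounded drift $\frac12 X(u_1+u_2)$, and the maximum principle shows it vanishes.)

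The main obstacle is Step 2 together with the intrinsic H\"older bookkeeping: the maximum principle must be justified for the degenerate operator, and one must verify that the time regularity of $\bar m$ yields the parabolic $C^{0,\alpha}_{\mathcal X}$ regularity of the coefficient.
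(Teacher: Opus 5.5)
Your strategy is the paper's: Hopf--Cole transform, Theorem \ref{eu} applied to $w$, comparison to get a positive lower bound for $w$, the chain rule to return to $u$, and uniqueness via the linear problem. The paper's auxiliary function $w^\delta=\delta-we^{t\|F\|_{L^\infty}/2}$ is exactly $\delta-v$ for your $v=e^{Mt/2}w$. However, Step 2 as written contains a sign error that breaks the key inference. With $v=e^{Mt/2}w$ one has $\partial_t v-\Delta_{\mathcal X}v+\frac{1}{2}(\tilde F-M)v=0$, not $+\frac{1}{2}(\tilde F+M)v$. With the nonnegative coefficient you wrote, the weak maximum principle only gives $v\ge 0$, which is useless for $-2\log w$. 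Your ``refined comparison'' giving $v\ge\delta$ would also be false for that equation: for $\tilde F\equiv M$ and $\tilde g\equiv\delta$ its solution is $\delta e^{-Mt}$.

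With the correct coefficient $\frac{1}{2}(\tilde F-M)\le 0$, the constant $\delta$ is a subsolution. Indeed, $v-\delta$ satisfies $\partial_t(v-\delta)-\Delta_{\mathcal X}(v-\delta)+\frac{1}{2}(\tilde F-M)(v-\delta)=\frac{\delta}{2}(M-\tilde F)\ge 0$, with $v(0,\cdot)-\delta\ge 0$. Apply your minimum-point argument along integral curves of the $X_k$ to $e^{-\lambda t}(v-\delta)$ with $\lambda>M$, so that the zeroth-order coefficient becomes strictly positive. This gives $v\ge\delta$, i.e.\ $w\ge\delta e^{-Mt/2}$, which is precisely the paper's computation. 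That elementary argument for classical solutions on $\mathbb T^d$ is better justified than the paper's appeal to Proposition \ref{CP}, a viscosity comparison result on $\mathbb R^d$ for equations without zeroth-order term.

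On the terminal datum, you are right that (H3.2) only gives Lipschitz $G$, whereas Theorem \ref{eu} needs $e^{-G(\cdot,\bar m_T)/2}\in C^{2,\alpha}_{\mathcal X}(\mathbb T^d)$. The paper uses this silently; its reduction to $w(0,\cdot)=0$ even requires $\Delta_{\mathcal X}w(0,\cdot)$ to exist. But your fallback via the interior estimate \eqref{locales} cannot close this gap. Since $u(T,\cdot)=G(\cdot,\bar m_T)$, the conclusion $u\in C^{2,\alpha}_{\mathcal X}([0,T]\times\mathbb T^d)$ itself forces $G(\cdot,\bar m_T)\in C^{2,\alpha}_{\mathcal X}(\mathbb T^d)$, and an interior estimate says nothing up to $t=T$. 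So an extra hypothesis such as $\sup_{m}\|G(\cdot,m)\|_{C^{2,\alpha}_{\mathcal X}(\mathbb T^d)}<\infty$ is necessary for the statement, not a removable technicality. With that hypothesis and the sign corrected, your proof goes through.
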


For the FPE \eqref{FPE}, we need to consider the existence and uniqueness of solutions to its dual equation as follows
\begin{equation}\label{DFPE}
\left\{
 \begin{aligned}{ll}
   &-\partial_t m-\Delta_{\mathcal {X}}m-XmXu=\phi,&(t,x)\in (0,T]\times\mathbb{T}^d,\\
   & m(T,x)=0,& x\in\mathbb{T}^d,
\end{aligned}
\right.
\end{equation}
for any $\phi\in L^2([0,T]\times\mathbb{T}^d)$.
 Then by using a variant of Lax-Milgram theorem and the duality method, we obtain the existence and the  uniqueness  of solutions to the FPE \eqref{FPE}.

\begin{Prop}\label{3.2}
Under the assumption {\bf(H1)}, the FPE \eqref{FPE} has a unique solution $m\in H^{1}_{\mathcal {X}}([0,T]\times\mathbb{T}^d)$ in the sense of distribution.
\end{Prop}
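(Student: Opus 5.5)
The plan is a duality (Lions--Lax--Milgram) argument. We work on the Hilbert space $\mathcal{V}=L^2(0,T;H^1_{\mathcal X}(\mathbb{T}^d))$. Here $u$ is the $C^{2,\alpha}_{\mathcal X}$ solution of the HJE from Proposition \ref{HJEPF}, so $Xu$ is bounded. The FPE \eqref{FPE}, with $D_pH(x,Xu)=Xu$, is then a linear equation for $m$ with bounded drift. A distributional solution is a function $m\in\mathcal V$ such that
\[
\int_{\mathbb{T}^d} m_0\,\varphi(0)\,dx+\int_0^T\!\!\int_{\mathbb{T}^d} m\big(\partial_t\varphi+\Delta_{\mathcal X}\varphi-X\varphi\cdot Xu\big)\,dx\,dt=0
\]
for all test functions $\varphi\in C^{\infty}$ with $\varphi(T)=0$. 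This form comes from integrating by parts with the dual fields $X_k^*$ from \eqref{ck}, in the same way as in \eqref{mu}.

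\emph{Step 1: energy estimate and coercivity.} Test the equation with $e^{-\lambda t}m$ and integrate by parts. This uses $\int_{\mathbb{T}^d}(X_k v)w\,dx=\int_{\mathbb{T}^d}v\,X_k^*w\,dx$ on the torus (no boundary terms) and the boundedness of $c_k$ and $Xu$. With Young's inequality we get
\[
\sup_t\|m(t)\|_{L^2}^2+\int_0^T\|Xm\|_{L^2}^2\,dt\le C\big(\|m_0\|_{L^2}^2+\|m\|_{L^2L^2}^2\big).
\]
Taking $\lambda$ large absorbs the lower-order terms, so the bilinear form
\[
E(m,\varphi)=\int_0^T\!\!\int_{\mathbb{T}^d}\big(-m\,\partial_t\varphi+Xm\cdot X\varphi+ m\,Xu\cdot X\varphi+\text{(terms with }c_k)\big)
\]
satisfies $E(\varphi,\varphi)\ge \tfrac12\|\varphi\|_{\mathcal V}^2-C\|\varphi(0)\|^2_{L^2}$ on the smooth test space after the exponential change of unknown. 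The same computation applied to the dual backward problem \eqref{DFPE} gives the analogous bound for that problem.

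\emph{Step 2: existence via Lions' variant of Lax--Milgram.} Let $\Phi$ be the pre-Hilbert space of smooth $\varphi$ with $\varphi(T)=0$, normed by $\|\varphi\|_{\mathcal V}^2+\|\varphi(0)\|_{L^2}^2$. For each $\varphi\in\Phi$, the form $E(\cdot,\varphi)$ is continuous on $\mathcal V$, and coercivity holds on $\Phi$. Lions' theorem then produces $m\in\mathcal V$ with $E(m,\varphi)=\int m_0\varphi(0)$ for all $\varphi$, which is exactly the weak formulation above. If $m_0$ is only in $L^1$, we first mollify it; \textbf{(H2)} gives $m_0$ H\"older, hence in $L^2$, so this is not needed here. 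The regularity $\partial_t m\in L^2(0,T;(H^1_{\mathcal X})')$ is then read off from the equation. This gives $m\in H^1_{\mathcal X}$ in the paper's sense.

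\emph{Step 3: uniqueness by duality.} Take two solutions and let $\bar m$ be their difference, which solves the problem with $m_0=0$. For arbitrary $\phi\in L^2$, solve the dual problem \eqref{DFPE} for $\psi$. It is a linear backward equation of the type \eqref{peq} after time reversal, with bounded drift $Xu$, and its solvability in $\mathcal V$ (with $\partial_t\psi$ in the dual space) follows from Step 1 applied to that equation. Testing the equation of $\bar m$ against $\psi$, which is justified by density of smooth functions in $\mathcal V$ since both functions have the needed time regularity, gives $\int_0^T\!\int_{\mathbb{T}^d}\bar m\,\phi=0$. Since $\phi$ is arbitrary, $\bar m=0$.

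The main obstacle is Step 3's justification: $\psi$ is only a weak solution, so pairing it with $\bar m$ requires the time-derivative regularity and an approximation lemma in the intrinsic space $H^1_{\mathcal X}$. Mollifying in $x$ does not commute with the $X_k$; the commutator is controlled by a Friedrichs-type lemma, since the coefficients $\sigma^{ik}$ are smooth, so $[X_k,\rho_\varepsilon*]v\to0$ in $L^2$. I would handle the time direction with Steklov averages.
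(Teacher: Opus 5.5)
Your argument is correct in substance but organized differently from the paper's. The paper applies the Lax--Milgram variant to the auxiliary terminal-value problem \eqref{adj}, not to the FPE itself. It works in $H^{1,1}_{\mathcal X}$, which also requires $\partial_t m\in L^2$, and tests with $\varphi_t e^{-\theta t}$. It then regards the solution map $\phi\mapsto m$ as a compact operator $T$ on $L^2$, rewrites the FPE as $(I-T)^*m=0$, and obtains uniqueness from the Fredholm alternative plus the strong maximum principle for $-\Delta_{\mathcal X}-Xu\cdot X$. This is the pattern of the stationary ergodic problem, hence its conclusion ``up to a multiplicative constant''.

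You instead apply Lions' theorem directly to the forward problem in $\mathcal V=L^2(0,T;H^1_{\mathcal X}(\mathbb{T}^d))$. You encode $m_0$ through $\varphi\mapsto\int m_0\varphi(0)$ on a test space whose norm contains $\|\varphi(0)\|_{L^2}$, and you prove uniqueness separately. Your route needs only $Xu,c_k\in L^\infty$ and $m_0\in L^2$ and imposes the initial condition exactly. It gives uniqueness outright, needs neither compactness nor a maximum principle, and lands exactly in the space of the statement: Definition \ref{2.07} with $r=1$ is $L^2(0,T;H^1_{\mathcal X})$, since $|I|+2j\le 1$ forces $j=0$. The paper's route aims at the stronger $\partial_t m\in L^2$. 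However, its $\varphi_t$-testing produces the boundary term $-\frac{1}{2}\|Xm(0)\|^2_{L^2}$ at the endpoint where no condition is imposed, and this term has the unfavorable sign.

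A few corrections to your write-up:
\begin{itemize}
\item Since $\varphi(T)=0$, one has $-\iint\varphi\,\partial_t\varphi=\frac{1}{2}\|\varphi(0)\|^2_{L^2}$. The coercivity bound is therefore $E(\varphi,\varphi)\ge\frac{1}{2}\|\varphi\|^2_{\mathcal V}+\frac{1}{2}\|\varphi(0)\|^2_{L^2}$. With the minus sign you wrote, coercivity in the $\Phi$-norm would fail.
\item You need Lions' theorem in its standard form (right-hand side in $\Phi'$, existence only), not Lemma \ref{App1.1} as stated, because $\varphi\mapsto\int m_0\varphi(0)$ is not continuous on $\mathcal V$. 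This is also why a separate uniqueness step is required.
\item The backward problem in Step 3 must be the exact adjoint of your weak form, namely $-\partial_t\psi-\Delta_{\mathcal X}\psi+Xu\cdot X\psi=\phi$ with $\psi(T)=0$. Its drift has the opposite sign from \eqref{DFPE} as printed.
\item Step 3 can be shortened. Once $C^\infty(\mathbb{T}^d)$ is dense in $H^1_{\mathcal X}(\mathbb{T}^d)$ (your Friedrichs argument), use the Lions--Magenes lemma for $H^1_{\mathcal X}\subset L^2\subset(H^1_{\mathcal X})'$, which your duality pairing needs anyway. It lets you test the equation for $\bar m$ with $\bar m$ itself, and Gronwall gives $\bar m=0$ without solving any dual problem.
\end{itemize}
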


Combining all preceding results, we achieve our main goal by Schauder fixed point theorem. Specifically, we obtain the existence and uniqueness of the coupling solution for the degenerate MFG system \eqref{MFG} in the intrinsic H\"{o}lder space as follows.
\begin{Thm}\label{thm3}
 Under the assumptions {\bf{(H1)}}-{\bf{(H4)}}, given the vector fields $\mathcal {X}$ defined in \eqref{sigma}, there exists a unique coupling solution $(u,m)\in C^{2,\alpha}_{\mathcal {X}}([0,T]\times\mathbb{T}^d)\times C([0,T];\mathcal {P}(\mathbb{T}^d))$ to the degenerate MFG system \eqref{MFG}.
\end{Thm}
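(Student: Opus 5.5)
The plan is the classical Lasry--Lions scheme: Schauder's fixed point theorem for existence and the monotonicity argument for uniqueness, using Proposition \ref{HJEPF} and Proposition \ref{3.2} as black boxes.

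\emph{Existence.} Define the map $\Psi:\mathcal{C}\to\mathcal{C}$ as follows.
\begin{itemize}
\item[(i)] Given $\bar m\in\mathcal{C}$, Proposition \ref{HJEPF} gives a unique $u=u^{\bar m}\in C^{2,\alpha}_{\mathcal X}$ solving \eqref{HJE}. Its bound \eqref{uhje} is uniform in $\bar m$, because by (H3) the data $F(\cdot,\bar m)$ and $G(\cdot,\bar m_T)$ are bounded in $C^{0,\alpha}_{\mathcal X}$-type norms independently of $\bar m$. The Hopf--Cole lower bound for $w$ is likewise uniform.
\item[(ii)] With this $u$, Proposition \ref{3.2} gives a unique weak solution $m$ of \eqref{FPE}. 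Set $\Psi(\bar m)=m$.
\end{itemize}

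To show that $\Psi$ maps $\mathcal{C}$ into itself, represent $m_t$ as the law of the solution $Z_t$ of \eqref{SDE} with feedback $\alpha=-Xu(t,Z_t)$, which is bounded uniformly in $\bar m$. Then
\[
\mathbf E|Z_t-Z_s|\le C|t-s|+C|t-s|^{1/2}.
\]
Hence $d_1(m_s,m_t)\le C|t-s|^{1/2}$ and the second moments stay bounded, with constants independent of $\bar m$. Choosing the constants in the definition of $\mathcal{C}$ accordingly makes $\Psi(\mathcal C)\subset\mathcal C$. Since $\mathcal{C}$ is convex and compact, it remains to prove continuity of $\Psi$.

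\emph{Continuity.} Let $\bar m^n\to\bar m$ in $C([0,T];\mathcal P)$. By (H3), $F(\cdot,\bar m^n)\to F(\cdot,\bar m)$ uniformly, and similarly for $G$. The uniform $C^{2,\alpha}_{\mathcal X}$ bounds and Arzel\`a--Ascoli give a subsequence with $u^n\to u'$ in $C^{2}_{\mathcal X}$. The limit $u'$ solves \eqref{HJE} with datum $\bar m$, so uniqueness forces $u'=u^{\bar m}$ and the whole sequence converges. Then the drifts $Xu^n$ converge uniformly, so the corresponding solutions $m^n$ of \eqref{FPE} are precompact in $\mathcal C$. Every limit point is a weak solution with drift $Xu$, and the uniqueness in Proposition \ref{3.2} identifies it as $\Psi(\bar m)$. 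The Schauder fixed point theorem then yields a fixed point $m=\Psi(m)$, and $(u^m,m)$ solves \eqref{MFG}.

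\emph{Uniqueness.} Let $(u_1,m_1)$ and $(u_2,m_2)$ be two solutions. Test the equation for $u_1-u_2$ against $m_1-m_2$ and integrate over $[0,T]\times\mathbb{T}^d$. Using the duality between $\Delta_{\mathcal X}$ and $\Delta^*_{\mathcal X}$ and between $X$ and $\mathrm{div}_{\mathcal X^*}$ (integration by parts with \eqref{ck}), the second-order terms cancel and one obtains
\begin{align*}
&\int_{\mathbb{T}^d}(G(x,m_{1,T})-G(x,m_{2,T}))\,d(m_{1,T}-m_{2,T})+\int_0^T\!\!\int_{\mathbb{T}^d}(F(x,m_1)-F(x,m_2))\,d(m_1-m_2)\,dt\\
&\quad+\frac12\int_0^T\!\!\int_{\mathbb{T}^d}(m_1+m_2)|Xu_1-Xu_2|^2\,dx\,dt=0.
\end{align*}
All three terms are nonnegative by (H4). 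Hence $Xu_1=Xu_2$ on $\{m_1+m_2>0\}$, so $m_1$ and $m_2$ solve the same FPE and are equal by Proposition \ref{3.2}. Then $u_1=u_2$ by Proposition \ref{HJEPF}.

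\emph{Main obstacle.} The hardest step is justifying this integration by parts. The solutions $m_i$ lie only in $H^1_{\mathcal X}$ and are not classical, so the cross term in $\Delta^*_{\mathcal X}$ involving the $c_k$ must be checked to cancel exactly. I would handle this by using $u_1-u_2\in C^{2,\alpha}_{\mathcal X}$ as a legitimate test function in the distributional formulation of Proposition \ref{3.2}.
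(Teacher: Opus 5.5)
Your proposal is correct and follows essentially the same route as the paper: existence by Schauder's fixed point theorem on the compact convex set $\mathcal{C}$, with the map $\bar m\mapsto u\mapsto m$ built from Proposition~\ref{HJEPF} and Proposition~\ref{3.2}. As in the paper, invariance of $\mathcal{C}$ comes from the SDE-based $d_1$-H\"older bound (Lemma~\ref{APP14}), continuity from the uniform $C^{2,\alpha}_{\mathcal X}$ bounds together with uniqueness of the weak FPE solution, and uniqueness from Lasry--Lions monotonicity. The differences are minor: you write out the monotonicity identity that the paper omits (it cites Theorem~4.3 of Ryzhik's notes), and you get $u^n\to u$ by Arzel\`a--Ascoli plus uniqueness for the HJE rather than the paper's appeal to stability of viscosity solutions.
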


In this paper, we focus on the well-posedness and regularity of coupling solutions for a class of general degenerate MFG systems. By introducing the H\"{o}rmander vector fields with the natural condition \eqref{con2}, we obtain the existence and uniqueness of the solutions in intrinsic H\"{o}lder spaces for the MFG system \eqref{MFG} given by Theorem \ref{thm3}. From the partial differential equations (PDE, for short) perspective, we establish global theory for degenerate parabolic equations  \eqref{peq} on the torus \(\mathbb{T}^d\) given by Proposition \ref{prop13} for the first time, here the  H\"{o}rmander operators are not required to possess any group structure. 
Detailed obstacles are outlined in the following four parts.

 First, we establish a rigorous characterization of general degenerate MFG systems by means of general H\"{o}rmander  sum-of-squares operators satisfying the condition \eqref{con2}. We emphasize that this condition \eqref{con2} arises natural, as it covers a broad range of settings, including operators on the Heisenberg group, Grushin-type operators, and many others. Concrete examples are provided in Example \ref{Hes} and Example \ref{Gru}.  From the perspective of a single agent, the MFG systems \eqref{MFG} with H\"{o}rmander diffusions mean that each agent can move only along admissible direction at some point, thereby covering a wider class of applications including the classical setting.

 Second, we prove the well-posedness of the MFG systems \eqref{MFG} via Schauder fixed point theorem, its coupled solutions  can describe the Nash equilibria for a differential game with infinitely many small players. To this end, we need to obtain the existence and uniqueness results for the HJE \eqref{HJE} and the FPE \eqref{FPE} individually.
 For the FPE \eqref{FPE}, we apply a variant of Lax-Milgram theorem to obtain existence and uniqueness of weak solutions for its dual problem, from which we deduce the well-posedness of the FPE \eqref{FPE} via the duality method.
For the HJE \eqref{HJE}, using the Hopf transform together with the weak maximum principle, we show that the existence and uniqueness of the solutions in intrinsic H\"{o}lder spaces to the quasilinear problem \eqref{HJE} follows from that of its linearized counterpart, as established in Theorem \ref{eu}.

Third, for the degenerate parabolic equations \eqref{peq} defined on $\mathbb{T}^d$ associated with general H\"{o}rmander vector fields, we develop a global regularity theory for the first time within this framework. Unlike the existing works, we focus on H\"{o}rmander operators without any group structure. Here we provide an alternative proof method for a priori Schauder estimates for degenerate parabolic equations via Campanato spaces, which replaces the usual proofs based on the representation of fundamental solutions. The difficulty in our case is the lack of  commutation and the lack of homogeneity of the vector fields. The first one prevents standard derivatives to satisfy the original equation. Instead, we construct a polynomial corrector with respect to the solution, which yields an interior estimate for the difference between the derivative and its mean value, given Proposition \ref{X2uuR}. Note that the derivation of the Poincar\'{e}-type inequality for a parabolic version also constitutes a vital ingredient in the above proof.

Fourth, the global Schauder estimate on $\mathbb{T}^d$ stated in Proposition \ref{prop13} is guaranteed by both the corresponding global estimates on \(\mathbb{R}^d\) and the periodicity of the coefficients. Due to the lack of weak maximum principle on general H\"{o}rmander operators, here we use the Feynman-Kac formula to obtain the periodicity of solutions, instead of the PDE method. The Lipschitz condition of the coefficients $\sigma$ guarantees the pathwise uniqueness of solution.

The rest of the paper is organized as follows. In Section 2, we set up the intrinsic spaces induced by H\"{o}rmander vector fields, and give some known results. In Section 3, for the degenerate parabolic equations, we obtain the Schauder estimates given in Proposition \ref{prop13} and the regularity of solutions given in Theorem \ref{eu}. In Section 4, we give the existence and uniqueness of solutions to the HJE \eqref{HJE} given by Proposition \ref{HJEPF}, and to the FPE \eqref{FPE} given by Proposition \ref{3.2}. Then by the Schauder fixed point theorem, we obtain the well-posedness to the degenerate MFG system \eqref{MFG} given in Theorem \ref{thm3}.
\section{PRELIMINARIES AND KNOWN RESULTS}
\subsection{Intrinsic spaces induced by the H\"{o}rmander vector fields}
 Let $\mathcal {X}=\{X_1,\ldots,X_q\}$, $2\leq q< d$, a family of real smooth vector fields which are defined in some open set $\Omega\subset\mathbb{R}^d$.  Let us assign to each $X_i$ a weight $p_i=1$,  for any $i=1,2,\ldots,q.$
 For any multi-index $I=(i_1,i_2\ldots,i_s)$, $1\leq i_j\leq q$, we define the weight of $I$ is $|I|=s.$
 For any couple of vector fields $X$ and $Y$, we define their commutator as
$$[X,Y]:=XY-YX.$$
Set $X^I :=X_{i_1}X_{i_2}\ldots X_{i_s}$ and
$$X^{[I]} :=[X_{i_s},[X_{i_{s-1}},\ldots[X_{i_3},[X_{i_2},X_{i_1}]]\ldots]].$$
We will say that $X^{[I]}$ is a commutator with weight $|I|$.
Now, let us consider the Lie algebra generated by the vector fields $\mathcal {X}$, that is the set of vector fields obtained as linear combination of the fields $\mathcal {X}$ and their commutators of any finite step.
\begin{Def}\label{HC}{\rm{(H\"{o}rmander's condition)}}
Assume that a system of vector fields $\mathcal {X}$ have real smooth coefficients and the Lie algebra generated by the vector fields $\mathcal {X}$ span
$\mathbb{R}^d$ at every point of  an open set $\Omega\subset\mathbb{R}^d$. Then we call that the smooth vector fields $\mathcal {X}$ satisfy the ``H\"{o}rmander's condition" in some domain $\Omega$, and we also call $\mathcal {X}$ as ``H\"{o}rmander vector fields".
\end{Def}
Furthermore, we say that H\"{o}rmander vector fields $\mathcal {X}$ with the H\"{o}rmander index $\rm{k}$ if  these vector fields, together with their commutators of weight $|I|\leq\rm{k}\in\mathbb{N}$, span the tangent space at every point of $\Omega$.

\begin{Def} \label{FU}{\rm{(Free up to step $ r $)}} We say that a system of smooth vector fields $\mathcal {X}=\{X_1,\ldots,X_q\}$ is free
up to step $ r $ in a domain $\Omega$ of $\mathbb{R}^d$ if $\mathcal {X}$ and their commutators up to step $ r $ do not satisfy any linear relation other than those which hold automatically as a consequence of antisymmetry of the Lie bracket and Jacobi identity.
\end{Def}
For more details on H\"{o}rmander  vector fields see in \cite{Hormander,B14}.
Recalling some basic definitions induced by a family of vector fields $\mathcal {X}$ as follows.
\begin{Def}\label{2.6}{\rm{(Carnot-Carath\'{e}odory distance)}}
For any $x,\ y\in\Omega$, let
\begin{equation}\label{d1}
d_{c}(x,y)=\inf\{l(\gamma)|\ \gamma:[0,l(\gamma)]\rightarrow \Omega\ \text{is}\ \mathcal{X}\text{-subunit},\ \gamma(0)=x,\ \gamma(l(\gamma))=y\},
\end{equation}
where we call $\mathcal{X}$-subunit any absolutely continuous curve $\gamma$ such that
\begin{equation}\label{gamma}
\gamma'(t)=\sum^{q}_{j=1}\lambda_j(t)X_j(\gamma(t)),\ \text{a.e.,\ with}\ \sum^{q}_{j=1}\lambda^2_j(t)\leq1\ a.e..
\end{equation}
Then $d_{c}$ is a distance called the Carnot-Carath\'{e}odory distance, or CC-distance.
\end{Def}

\begin{Rem}
According to Chow's theorem \cite{Nagel85}, if the vector fields satisfy the H\"{o}rmander's condition, the set defined in \eqref{d1} is nonempty, so that $d_{c}$ is finite and continuous with respect to  the original Euclidean topology included on $\mathbb{R}^d$, see in \cite{Montgomery}.
\end{Rem}

A known result in \cite{Fefferman} states that, for any small compact subset $\Omega'\subset\Omega$, there exists a constant $C>0$ such that
\begin{equation}\label{2.7}
C^{-1}|x-y|\leq d_c(x,y)\leq C|x-y|^{\frac{1}{{\rm k}}},
\end{equation}
for any $x,y\in \Omega'$, where $r$ is the H\"{o}rmander index. Then we define a family of balls
 \[B_R(x)=\{y\in \Omega:d_c(x,y)<R\},\]
for any $x\in\Omega'$ and $R>0$ small enough. There exists some $Q>0$, called the homogeneous dimension, such that for any $R>0$ small enough,
\begin{equation}\label{2.8}
  C^{-1}R^{Q}\leq|B_{R}(x)|\leq CR^Q.
\end{equation}
For more details on $CC$-distance see in \cite{Nagel85}. 
Now let us introduce the parabolic Carnot-Carath\'{e}odory distance $d_P$ corresponding to $d_{c}$ in $\mathbb{R}^{d+1}$, namely,
\begin{equation}\label{929}
d_P((t,x),(s,y))=\sqrt{d^2_{c}(x,y)+|t-s|},
\end{equation}
and the corresponding ball is given by
$$
B_P((t,x),R)=\big\{(s,y)\in\mathbb{R}^{d+1}:d_P((t,x),(s,y))<R\big\}.
$$
It's easy to prove $d_P((t,x),(s,y))$ is a distance on $\mathbb{R}\times \Omega$, see Lemma 3.4 in \cite{Bramanti}.

 Let $\Omega_T:=(0,T)\times \Omega\subseteq \mathbb{R}^{d+1}$ be an open set, for any $\alpha\in(0,1)$, the seminorm of  H\"{o}lder space is defined as
\begin{equation}\label{8.05}
[u]_{C^{0,\alpha}_{\mathcal {X}}(\Omega_T)}:=\sup\left\{\frac{|u(t,x)-u(s,y)|}{d^{\alpha}_{P}((t,x),(s,y))}:(t,x),(s,y)\in \Omega_T,(t,x)\neq(s,y)\right\},
\end{equation}
the norm of intrinsic H\"{o}lder space is defined as
$$\|u\|_{C^{0,\alpha}_{\mathcal {X}}(\Omega_T)}:=[u]_{C^{0,\alpha}_{\mathcal {X}}(\Omega_T)}+\|u\|_{L^{\infty}(\Omega_T)}.$$
\begin{Def}\label{2.06}{\rm{(Intrinsic H\"{o}lder space)}}
Let $r$ be a nonnegative integer, the set
$$C^{r,\alpha}_{\mathcal {X}}(\Omega_T):=\{u:\Omega_T\rightarrow\mathbb{R},\ \text{such\ that}\ \|u\|_{C^{r,\alpha}_{\mathcal {X}}(\Omega_T)}<\infty\},$$
is called a intrinsic H\"{o}lder space related to H\"{o}rmander vector fields $\mathcal {X}$ with the norm
\begin{equation}\label{21.9}
  \|u\|_{C^{r,\alpha}_{\mathcal {X}}(\Omega_T)}:=\sum_{|I|+2j\leq r}\|\partial_t^{j}X^{I}u\|_{{C^{0,\alpha}_{\mathcal {X}}(\Omega_T)}},
\end{equation}
where $I$ is multi-index, $j$ is a nonnegative integer.
\end{Def}
\begin{Def}\label{2.07}{\rm{(Intrinsic Sobolev space)}}
For $1\leq p<\infty$, the set
$$W^{r,p}_{\mathcal {X}}(\Omega_T):=\big\{u\in L^p (\Omega_T):\partial^{j}_{t}X^I u\in L^p(\Omega_T),\ \text{for\ any\ } I\ \text{satisfies}\ |I|+2j\leq r\big\},$$
is called a intrinsic Sobolev space related to H\"{o}rmander vector fields $\mathcal {X}$ with the norm
$$\|u\|_{W^{r,p}_{\mathcal {X}}(\Omega_T)}:=\Big(\sum_{|I|+2j\leq r}\iint_{\Omega_T}|\partial^{j}_{t}X^I u|^p dxdt\Big)^{\frac{1}{p}}.$$
For $p=2$, we write $H^{r}_{\mathcal {X}}(\Omega_T)$ instead of $W^{r,p}_{\mathcal {X}}(\Omega_T)$.
\end{Def}
\begin{Def}\label{2.08}{\rm{(Campanato space)}} Let $p\geq1$ and $\lambda\geq0$. A function $u\in L^p_{loc}(\Omega_T)$ is said to belong to the Campanato space $\mathcal {L}^{p,\lambda}_{\mathcal {X}}(\Omega_T)$ if
$$\|u\|_{\mathcal {L}^{p,\lambda}_{\mathcal {X}}(\Omega_T)}=[u]_{p,\lambda,\Omega_T}+\|u\|_{L^p(\Omega_T)}<\infty,$$
where $$[u]_{p,\lambda,\Omega_T}=\sup_{(t,x)\in\Omega_T,R\in(0,d_0)}\Big(R^{-\lambda}\iint_{\Omega_T\cap B_P((t,x),R)}|u-(u)_{P}|^{p}dxdt\Big)^{\frac{1}{p}},$$
and $(u)_{P}$ is the average of $u$ on the parabolic ball centered at $(t,x)$ with radius $R$, $d_0=\min\{diam(\Omega_T),R_0\}$, for fixed $R_0$.
\end{Def}
\begin{Rem}
For any $1\leq p<\infty$, the following embeddings hold
\begin{equation}\label{1019}
  C^{\mathrm{k}{r} ,\alpha}_{\mathcal {X}}(\Omega)\hookrightarrow C^{r,\frac{\alpha}{\mathrm{{k}}}}(\Omega),\quad W^{\mathrm{k}{r},p}_{\mathcal {X}}(\Omega)\hookrightarrow W^{r,p}(\Omega),
\end{equation}
where $\mathrm{k}$ is the H\"{o}rmander index.
The first is proved in \cite{Xu90} and the second is proved in \cite{Xu92}.
More properties about continuity, compactness and embedding in the intrinsic spaces, see in \cite{BB10}.
\end{Rem}


\subsection{Known results of intrinsic space}
\begin{Lem}\label{Sch}{\rm{(Theorem 1.1 in \cite{Bramanti})}}
Let $\Omega_T$ be a bounded subset of $\mathbb{R}\times\Omega$. Then for every domain $\Omega'_T\Subset \Omega_T$, there exists a constant $C(\Omega_T,\Omega'_T,\mathcal {X},\alpha)>0$, $\alpha\in(0,1)$, such that for each $u \in C^{2,\alpha}_{\mathcal {X}}(\Omega_T)$ with $\mathcal {H}u\in C_{\mathcal {X}}^{0,\alpha}(\Omega_T)$ one has
\begin{equation}\label{Sch1}
  \|u\|_{C_{\mathcal {X}}^{2,\alpha}(\Omega'_T)}\leq C\big(\|\mathcal {H}u\|_{C_{\mathcal {X}}^{0,\alpha}(\Omega_T)}+\|u\|_{L^{\infty}(\Omega_T)}\big),
\end{equation}
where $\mathcal {H}$ is given by \eqref{001}.
\end{Lem}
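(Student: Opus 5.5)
We would follow the Campanato route announced in the introduction, localizing in parabolic Carnot--Carath\'eodory balls $B_P((t,x),R)$ inside $\Omega_T$. The first step is a Campanato--H\"older equivalence in the metric $d_P$: because $d_c$ satisfies the doubling property \eqref{2.8} locally, the parabolic balls have measure comparable to $R^{Q+2}$. Hence for $\lambda=Q+2+2\alpha$ the space $\mathcal{L}^{2,\lambda}_{\mathcal X}$ coincides with $C^{0,\alpha}_{\mathcal X}$ on compact subsets, with equivalent seminorms. This reduces \eqref{Sch1} to a Campanato bound for $X_iX_ju$ and $\partial_t u$ on $\Omega'_T$. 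We would also treat the zero-order term $c\,u$ as part of the right-hand side, writing $\mathcal H_0 u:=\partial_t u-\Delta_{\mathcal X}u=f-cu$. Its $C^{0,\alpha}_{\mathcal X}$ norm is bounded by $\|f\|_{C^{0,\alpha}_{\mathcal X}}+C\|u\|_{C^{0,\alpha}_{\mathcal X}}$, and the last term is later absorbed by interpolation.

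The second step is decay for solutions of the homogeneous equation $\mathcal H_0 v=0$ in $B_P(z_0,R)$. From Caccioppoli inequalities for $Xv$ and $\partial_t v$, together with the parabolic Poincar\'e inequality $\iint_{B_P(R)}|v-(v)_P|^2\le CR^2\iint_{B_P(R)}(|Xv|^2+R^2|\partial_t v|^2)$, we want
\[
\iint_{B_P(\rho)}|X^2v-(X^2v)_{P,\rho}|^2\le C\Big(\frac{\rho}{R}\Big)^{Q+4}\iint_{B_P(R)}|X^2v-(X^2v)_{P,R}|^2 .
\]
In the classical case one differentiates the equation, but here $X_k$ does not commute with $\Delta_{\mathcal X}$. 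We would instead use the Rothschild--Stein lifting to free vector fields, which is legitimate by (H1) and the freeness assumption, and compare $\mathcal H_0$ locally with the left-invariant heat operator on the free nilpotent group. For that model operator, right-invariant derivatives commute with the operator, and hypoellipticity gives $C^\infty$ bounds with the homogeneous scaling. We then subtract from $v$ a polynomial corrector $P_{z_0}$ of homogeneous degree $2$ that matches the mean of $X^2v$ and $\partial_tv$ at scale $R$. The difference $v-P_{z_0}$ solves an equation whose right-hand side is a commutator error of order $R^{1}$ relative to the second derivatives, and iterating the $L^2\to L^\infty$ estimate gives the decay above up to an additive $CR^{Q+2+2\alpha}$ error.

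The third step is perturbation. On $B_P(z_0,R)$ we split $u=v+w$, where $v$ solves $\mathcal H_0v=(f-cu)_{P,R}$, which becomes homogeneous after subtracting $t\,(f-cu)_{P,R}$, with $v=u$ on the parabolic boundary. The remainder $w$ solves $\mathcal H_0 w=(f-cu)-(f-cu)_{P,R}$ with zero data. The energy estimate together with the $L^2$-boundedness of the singular-integral part (Rothschild--Stein) gives $\iint|X^2w|^2\le C[f-cu]^2_{C^{0,\alpha}}R^{Q+2+2\alpha}$. Combining this with step two and the standard iteration lemma yields
\[
[X^2u]_{2,Q+2+2\alpha,B_P(r)}\le C\big(\|f-cu\|_{C^{0,\alpha}_{\mathcal X}}+\|u\|_{L^\infty}\big)+\varepsilon\,\|u\|_{C^{2,\alpha}_{\mathcal X}(B_P(2r))}.
\]
The $\partial_t u$ term is handled the same way, or read off from the equation. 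Finally we cover $\Omega'_T$ by small balls, use cutoffs between $\Omega'_T$ and $\Omega_T$ with weighted seminorms, and apply interpolation $\|u\|_{C^{1,\alpha}_{\mathcal X}}\le\varepsilon\|u\|_{C^{2,\alpha}_{\mathcal X}}+C_\varepsilon\|u\|_{L^\infty}$ to absorb the $\varepsilon$-terms and obtain \eqref{Sch1}.

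The main obstacle is step two, namely Campanato decay for $X^2v$ despite non-commutativity and inhomogeneity of $\mathcal X$. The key lemma is the polynomial-corrector estimate combined with the lifting and approximation argument, where the lower-order error terms must be shown to have a genuinely better power of $R$. We would first try to establish it on the lifted free group and then project back to $\Omega$.
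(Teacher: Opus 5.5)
\textbf{Relation to the paper.} The paper does not prove Lemma~\ref{Sch}; it imports it from Theorem~1.1 of \cite{Bramanti}. There the proof uses the Rothschild--Stein lifting and approximation, the heat kernel $\Gamma$ of the frozen model operator $\partial_t-\sum_j Y_j^2$ on the free nilpotent group, and $C^{\alpha}$-continuity of singular and fractional integrals in spaces of homogeneous type. Your Campanato perturbation scheme is therefore a genuinely different route. It is essentially the scheme the paper itself runs in Section~3 (Propositions~\ref{Ca31}--\ref{7210}). Your degree-$2$ correctors on the free group actually improve on the paper's corrector from Lemma~\ref{xue2cor22}. Polynomials in $x'$ alone have symmetric $Y_iY_jh$, whereas yours can also match the means of $[X_i,X_j]v$ through the degree-$2$ coordinates.

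\textbf{The gap in Step 2.} Step 2, the point you flag yourself, does not close as written. Hypoellipticity plus translations and dilations gives scale-invariant bounds only for solutions of the \emph{model} equation. But $v$ and $v-P_{z_0}$ solve the lifted equation with $\tilde X_j=Y_j+R_j$. Moving the difference to the right-hand side gives $\mathcal{H}_{\rm model}(v-P_{z_0})=\sum_j(Y_jR_j+R_jY_j+R_j^2)v$.
\begin{itemize}
\item $R_j$ has local degree $\le 0$, but in the left-invariant frame it is $\sum_l b_{jl}Y_{[l]}$. Here $b_{jl}$ vanishes to homogeneous order $\ge d_l$, and $Y_{[l]}$ ranges over commutators of every length $d_l\le r$.
\item Hence this right-hand side contains $X$-derivatives of $v$ of order up to $r+1$.
\item It is ``of order $R$ relative to $X^2v$'' only if you already have $R$-uniform higher-order interior bounds for the non-homogeneous operator. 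That is the content being proved.
\item It cannot be absorbed either, because it is of higher order than the quantity estimated.
\end{itemize}
In \cite{Bramanti} this never arises. There $R_j$ falls on the explicit homogeneous kernel, and operators of local degree $\le 0$ preserve the type of a kernel. So $\tilde X_i\tilde X_j\Gamma-Y_iY_j\Gamma$ is of type $\ge 1$, a harmless fractional integral.

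\textbf{How to close your route.}
\begin{itemize}
\item Obtain the uniform interior estimates for $\mathcal H_0$ itself rather than from the model. The Rothschild--Stein estimate $\|\phi\|_{H^{k}_{\mathcal X}}\le C(\|\mathcal H_0\phi\|_{H^{k-2}_{\mathcal X}}+\|\phi\|_{L^2})$ holds with one constant for all $\phi$ supported in a fixed compact set.
\item Apply it to $\xi\eta v$ with cut-offs satisfying $|X^I\eta|\le CR^{-|I|}$, together with a scale-invariant Sobolev embedding on lifted balls. This is what \eqref{Caf0} and \eqref{354} are meant to supply.
\item Use the model only to build explicit correctors, so that the error falls on the polynomial: $\mathcal H_0P_{z_0}=O(R\,|(X^2v)_{P,R}|)$.
\item The degree-$1$ corrector needed in the Poincar\'e chain also needs a $t$-linear term. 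It cancels the first-order drift of $\sum_j\tilde X_j^2$ relative to $\sum_jY_j^2$, which is not small.
\item The resulting additive terms are $O(\rho^{Q+4})\big(\|Xu\|_{L^\infty}^2+\|X^2u\|_{L^\infty}^2\big)$ and are absorbed by interpolation.
\end{itemize}
This repaired version still rests on the Rothschild--Stein parametrix through the $L^2$ estimate, as does your Step 3, so it does not fully avoid kernel methods. Finally, $|B_P(R)|\simeq R^{Q+2}$ holds only for the lifted free fields, not for general $\mathcal X$. Step 1 should therefore be carried out upstairs, or with $|B_P|$-normalized Campanato seminorms.
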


Then we give a parabolic version of the equivalence property.
 \begin{Lem}\label{A3}{\rm{(Lemma 2.3 in \cite{Y14})}}
The Campanato spaces $\mathcal {L}^{2,Q+2+2\alpha}_{\mathcal {X}}(\Omega_T)$ given in definition \ref{2.08} and the intrinsic Sobolev space $C^{0,\alpha}_{\mathcal {X}}(\Omega_T)$ given in Definition \ref{2.07}, $0<\alpha<1$, are topologically and algebraically isomorphic. Here $Q$ is the homogenous dimension with respect to  a set of vector fields $\mathcal {X}$.
\end{Lem}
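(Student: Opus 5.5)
The plan is to follow Campanato's classical argument. The only geometric inputs are the measure estimate \eqref{2.8} and the fact that $d_P$ is a genuine distance. From \eqref{2.8} and \eqref{929}, the parabolic ball $B_P((t,x),R)$ contains the cylinder $(t-R^2/2,t+R^2/2)\times B_{R/\sqrt2}(x)$ and is contained in $(t-R^2,t+R^2)\times B_R(x)$. Hence for $R<d_0$,
\begin{equation*}
C^{-1}R^{Q+2}\leq |B_P((t,x),R)|\leq C R^{Q+2}.
\end{equation*}
In particular the measure $dxdt$ is doubling with respect to $d_P$ at small scales, so Lebesgue's differentiation theorem holds on parabolic balls. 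I will also assume that $\Omega_T$ has the usual measure-density property $|\Omega_T\cap B_P(z,R)|\geq A R^{Q+2}$ for $z\in\Omega_T$ and $R<d_0$. This holds trivially in the periodic and torus settings where the lemma is used, since boundary effects occur only in time and a half-cylinder still has measure comparable to $R^{Q+2}$.

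\emph{Inclusion $C^{0,\alpha}_{\mathcal X}\subset\mathcal L^{2,Q+2+2\alpha}_{\mathcal X}$.} For $z\in\Omega_T$ and $R<d_0$, every point $w$ of $\Omega_T\cap B_P(z,R)$ satisfies $|u(w)-(u)_P|\leq [u]_{C^{0,\alpha}_{\mathcal X}}(2R)^\alpha$, by the triangle inequality for $d_P$. Squaring and integrating gives
\begin{equation*}
\iint_{\Omega_T\cap B_P(z,R)}|u-(u)_P|^2\leq C[u]^2_{C^{0,\alpha}_{\mathcal X}}R^{Q+2+2\alpha}.
\end{equation*}
Moreover $\|u\|_{L^2}\leq |\Omega_T|^{1/2}\|u\|_{L^\infty}$, so $\|u\|_{\mathcal L^{2,Q+2+2\alpha}_{\mathcal X}}\leq C\|u\|_{C^{0,\alpha}_{\mathcal X}}$.

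\emph{Reverse inclusion.} Write $u_{z,R}$ for the average of $u$ over $\Omega_T\cap B_P(z,R)$.

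1. Fix $z$ and compare averages at radii $R$ and $R/2$. Averaging over the smaller set and using the measure-density and doubling bounds,
\begin{equation*}
|u_{z,R}-u_{z,R/2}|^2\leq \frac{C}{R^{Q+2}}\Big(\iint_{B_P(z,R)}|u-u_{z,R}|^2+\iint_{B_P(z,R/2)}|u-u_{z,R/2}|^2\Big)\leq C[u]^2 R^{2\alpha}.
\end{equation*}

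2. Telescoping over radii $2^{-k}R$ gives a geometric series. Hence $u_{z,2^{-k}R}$ is Cauchy, and its limit equals $u(z)$ at Lebesgue points. It follows that $|u(z)-u_{z,R}|\leq C[u]R^\alpha$, and the same holds for all $r\in(0,R]$, not just dyadic radii, by one more comparison step.

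3. For $z,w$ with $\rho=d_P(z,w)<d_0/2$, set $R=2\rho$. Then $B_P(w,\rho)\subset B_P(z,R)$, and the same averaging argument gives $|u_{z,R}-u_{w,\rho}|\leq C[u]R^\alpha$. Combining this with step 2 yields
\begin{equation*}
|u(z)-u(w)|\leq C[u]\rho^\alpha.
\end{equation*}
For $\rho\geq d_0/2$ the difference is bounded by $2\|u\|_{L^\infty}\leq C\|u\|_{L^\infty}\rho^\alpha$.

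4. The $L^\infty$ bound itself follows from $|u(z)|\leq |u_{z,d_0/2}|+C[u]d_0^\alpha$ together with $|u_{z,d_0/2}|\leq C d_0^{-(Q+2)/2}\|u\|_{L^2}$. So $u$ has a representative in $C^{0,\alpha}_{\mathcal X}$ with norm bounded by $C\|u\|_{\mathcal L^{2,Q+2+2\alpha}_{\mathcal X}}$.

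Since both inclusions are linear and bounded, the two spaces coincide with equivalent norms, which gives the asserted isomorphism. The constants depend on $Q$, $\alpha$, $d_0$, $A$ and the constant in \eqref{2.8}.

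The step I expect to be the main obstacle is the geometric input rather than the analysis. Estimate \eqref{2.8} is only local, valid for small $R$ on compact subsets, so one has to check that the parabolic balls are uniformly doubling and uniformly measure-dense in $\Omega_T$. This is where the cutoff $d_0=\min\{\mathrm{diam}(\Omega_T),R_0\}$ in Definition \ref{2.08} is used: all radii are kept below $R_0$, and larger scales are handled crudely through $\|u\|_{L^\infty}$ and $\|u\|_{L^2}$. Near the initial time $t=0$ I would verify measure density using half-cylinders.
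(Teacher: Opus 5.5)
Your argument is correct. It is the classical Campanato proof run in the metric measure space $(\Omega_T,d_P,dx\,dt)$. The paper itself gives no argument: it imports the result as Lemma 2.3 of \cite{Y14}. In passing it also misnames $C^{0,\alpha}_{\mathcal X}$ as the Sobolev space of Definition \ref{2.07}; you correctly read it as the H\"older space normed via \eqref{8.05}.

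So the difference is citation versus a self-contained proof. What yours buys is an explicit list of what is actually used:
\begin{itemize}
\item the two-sided bound $|B_P(z,R)|\approx R^{Q+2}$ coming from \eqref{2.8};
\item the triangle inequality for $d_P$;
\item Lebesgue differentiation for a locally doubling measure;
\item the measure-density condition $|\Omega_T\cap B_P(z,R)|\geq AR^{Q+2}$.
\end{itemize}
The last hypothesis is genuinely needed for the reverse inclusion, even though the statement as quoted places no condition on $\Omega_T$.

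One point in your justification of that hypothesis needs correcting. The paper does not apply the lemma only on $(0,T)\times\mathbb{T}^d$. The reverse inclusion is used on the cylinders $Q_R=(t_0-R^2,t_0+R^2)\times B_R(x_0)$ in Proposition \ref{7210}, and likewise on $Q^0_R$ in Proposition \ref{311}. Their spatial sections are CC balls, so there is a genuine spatial boundary.

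Measure density still holds there, because $d_c$ is a length metric. Take $y\in B_R(x_0)$ and $r<R$.
\begin{itemize}
\item If $d_c(x_0,y)<r/2$, then $B_{r/2}(x_0)\subset B_R(x_0)\cap B_r(y)$.
\item Otherwise, take a subunit curve from $x_0$ to $y$ of length less than $d_c(x_0,y)+r/4$, and walk back a distance $r/2$ from $y$ along it. This gives a point $p$ with $B_{r/4}(p)\subset B_R(x_0)\cap B_r(y)$.
\end{itemize}
For the time direction, the time interval has length $2R^2>r^2$ (or $R^2>r^2$ for $Q^0_R$). Hence one of $(s-r^2/2,s)$ and $(s,s+r^2/2)$ lies inside it. Combining this with the spatial step applied at radius $r/\sqrt2$ and with \eqref{2.8}, you get $|Q_R\cap B_P((s,y),r)|\geq Ar^{Q+2}$. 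Larger admissible radii are handled trivially.

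With this remark added, your proof covers every use of the lemma in the paper.
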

Next, we give the interpolation inequality in intrinsic H\"{o}lder space.
\begin{Lem}\label{IE}{\rm{(Proposition 2.2 in \cite{Xu92})}}
Suppose $j+\beta<k+\alpha$, $j,k\in\mathbb{N}$, $\alpha,\beta\in[0,1]$, and $u\in C^{k,\alpha}_\mathcal {X}(\Omega)$. Then for any $\varepsilon>0$, there exists a constant $C(\varepsilon,j,k,\Omega)$ such that
\begin{equation}\label{927}
 \|u\|_{C^{j,\beta}_\mathcal {X}(\Omega)}\leq \varepsilon\|u\|_{C^{k,\alpha}_\mathcal {X}(\Omega)}+C\|u\|_{L^{\infty}(\Omega)}.
\end{equation}
\end{Lem}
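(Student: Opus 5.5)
The plan is to reduce the lemma to a short list of elementary one-step interpolation inequalities, each proved by moving along integral curves of a single vector field $X_i$ (or in the time direction), and then to chain them with the usual $\varepsilon$-absorption argument.

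The reason single curves suffice is the following. If $\gamma(s)=\exp(sX_i)(x)$, then $\gamma$ is $\mathcal X$-subunit, so $d_c(\gamma(s),x)\le |s|$. Moreover $\frac{d}{ds}v(\gamma(s))=X_iv(\gamma(s))$. Hence no commutator of the $X_i$ ever appears. This is important, since the lack of commutation is precisely what makes other manipulations delicate.

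\emph{Step 1 (base inequalities).} For $v\in C^{1,\alpha}_{\mathcal X}$ and $0<h\le h_0$, I write
$$v(\gamma(h))-v(x)=hX_iv(x)+\int_0^h\big(X_iv(\gamma(s))-X_iv(x)\big)\,ds .$$
This gives
$$|X_iv(x)|\le 2h^{-1}\|v\|_{L^\infty}+h^{\alpha}[X_iv]_{\alpha}.$$
The same computation with $X_iv$ replaced by $X_i^2v$ in the remainder gives the analogous bound for $X_iv$ in terms of $\sup|X^2v|$.

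Second, joining $x,y$ by a near-optimal subunit curve gives $|v(x)-v(y)|\le d_c(x,y)\sup|Xv|$. Splitting into the cases $d_P<h$ and $d_P\ge h$ then gives
$$[v]_\beta\le h^{1-\beta}\sup|Xv|+2h^{-\beta}\|v\|_{L^\infty}.$$

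For the parabolic setting I add the time increment. Taking $t\mapsto t+h^2$ and using $d_P=|t-s|^{1/2}$ on such pairs, I treat $\partial_t$ as a second-order derivative in the same way.

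In each inequality, choosing $h$ small turns the estimate into the form $\|\text{lower}\|\le \varepsilon\|\text{higher}\|+C(\varepsilon)\|\text{lowest}\|$.

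\emph{Step 2 (chaining).} I order the quantities $\|X^Iv\|_{L^\infty}$ and $[X^Iv]_\beta$ (with $\partial_t$ counted with weight $2$) along the scale $j+\beta$. Each step of Step 1 moves up by at most one unit, or by a fractional amount. Applying Step 1 to $v=X^Iu$ with $|I|<k$, I obtain for every intermediate level
$$\Phi_{\text{level}}\le \varepsilon\,\Phi_{\text{next}}+C(\varepsilon)\,\Phi_{\text{previous}}.$$
Here $\Phi$ denotes the sum of all sup-norms or seminorms of that order. Iterating and summing with suitably rescaled $\varepsilon$'s yields the bound
$$\|u\|_{C^{j,\beta}_{\mathcal X}}\le \varepsilon\|u\|_{C^{k,\alpha}_{\mathcal X}}+C\|u\|_{L^\infty}.$$
The intermediate terms are absorbed into the left side because the number of levels is finite. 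Mixed derivatives $X^I$ are handled by applying Step 1 to $v=X^{I'}u$, peeling off one field at a time.

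\emph{Main obstacle.} I expect the main difficulty to be geometric. The integral curves $\exp(sX_i)x$ and the near-optimal subunit curves must stay inside $\Omega$, and $d_c$ must be comparable to the curve parameter uniformly. For a general $\Omega$ this forces either interior estimates with $h<\operatorname{dist}_c(x,\partial\Omega)$, or a restriction to small $h_0$ depending on $\Omega$, which is the source of the dependence $C(\varepsilon,j,k,\Omega)$.

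In the setting where the lemma is used here ($\mathbb{T}^d$, or $\mathbb{R}^d$ with periodic coefficients), there is no boundary. In that case I would take $h_0$ uniform, relying on smoothness of $\sigma$ and compactness, so that \eqref{2.7} and \eqref{2.8} hold uniformly. For a domain with boundary, the first thing I would try is to replace the forward curve by a backward one, $s\in[-h,0]$, whenever the forward curve exits.
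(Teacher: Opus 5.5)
The paper gives no proof of this lemma; it is simply quoted from Xu. Your argument is therefore an independent, self-contained route. Its core is sound. It is the subelliptic version of the Gilbarg--Trudinger argument, with segments $x+se_i$ replaced by flows $s\mapsto e^{sX_i}x$. The ingredients are:
\begin{itemize}
\item the first- and second-order Taylor formulas along these flows;
\item the bound $|v(x)-v(y)|\le d_c(x,y)\sup|Xv|$;
\item the same-order bound $[w]_\beta\le h^{\alpha-\beta}[w]_\alpha+2h^{-\beta}\|w\|_{L^\infty}$, needed when $j=k$ and $\beta<\alpha$;
\item the usual three-term $\varepsilon$-chaining.
\end{itemize}
Together they prove the inequality, and you are right that no commutator ever appears.

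This route buys more than you claim. On $\mathbb{T}^d$, or on $\mathbb{R}^d$ with bounded smooth periodic fields, the flows are complete and the only metric fact used is $d_c(e^{sX_i}x,x)\le|s|$. So every $h>0$ is admissible, and the constant depends only on $\varepsilon,j,k,\alpha,\beta,q$. In particular \eqref{2.7}, \eqref{2.8}, lifting and any uniform $h_0$ are irrelevant. One technical point should be added. If $X_jv$ is merely continuous, the chain rule along flows and along absolutely continuous subunit curves needs a justification, for instance Friedrichs mollification. Alternatively, work with smooth $u$, which is the a priori setting in which the paper uses the lemma.

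For the statement as written, however, there is a genuine gap: it is posed on an arbitrary $\Omega$ with norms taken up to $\partial\Omega$. Your proposed remedy does not close it.

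\begin{itemize}
\item The Lipschitz step is harmless, since Definition \ref{2.6} only uses curves lying in $\Omega$. The problem is confined to the single-field flows.
\item Switching to backward flows fails at characteristic boundary points. Take the Heisenberg fields of Example \ref{Hes} and $\Omega$ locally equal to $\{x_3>x_1^2+x_2^2\}$ near $0$. The $X_1$- and $X_2$-flows through $(0,0,\delta)$ are $s\mapsto(s,0,\delta)$ and $s\mapsto(0,s,\delta)$. Both leave $\Omega$ after length $\sqrt\delta$ in both directions, and no choice of $h_0$ helps.
\item In fact no argument can treat an arbitrary open set. Let $\Omega=\bigcup_n B_{r_n}(p_n)$ with $p_n=(3n,0,0)$ and $r_n\to0$. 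Set $u_n=x_1-3n$ on $B_{r_n}(p_n)$ and $u_n=0$ elsewhere. All second derivatives of $u_n$ vanish, $X_1u_n$ is an indicator, and the balls are $d_c$-separated by $1$. Hence $\|u_n\|_{C^{1,\alpha}_{\mathcal X}(\Omega)}\le C_0$, $\|X_1u_n\|_{L^\infty}=1$ and $\|u_n\|_{L^\infty}\le r_n$. The case $j=1,\beta=0,k=1$ then forces $C(\varepsilon)\ge(1-\varepsilon C_0)/r_n\to\infty$.
\end{itemize}
So a hypothesis on $\Omega$ is implicit; the Euclidean prototype in Gilbarg--Trudinger assumes a $C^{k,\alpha}$ domain.

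What your method does prove is the following:
\begin{itemize}
\item the boundaryless case, which is the one relevant on $\mathbb{T}^d$;
\item the interior weighted version, with $h<\operatorname{dist}_c(x,\partial\Omega)$;
\item a global version under a uniform geometric condition. The condition is that from every $x\in\Omega$ one can reach, through $\Omega$ and within $d_c$-distance $Ah$, a point $x'$ whose $X_i$-flow of length $h$ stays in $\Omega$. The estimate at $x'$ then transfers to $x$ through $|X_iv(x)-X_iv(x')|\le(Ah)^\alpha[X_iv]_\alpha$, or through $|X_iv(x)-X_iv(x')|\le Ah\sup|X^2v|$ in the second-order step.
\end{itemize}

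Finally, your parabolic add-on is not needed for this lemma, and it does not go through ``the same way''. The time-H\"older seminorm of $X^{k-1}u$ cannot come from $\partial_tX^{k-1}u$, which the $C^{k,\alpha}_{\mathcal X}$ norm does not control. Instead it must come from a mixed increment: write the Taylor expansion along $e^{\tau X_i}x$ at two times $t,t'$, subtract, and take $h=|t-t'|^{1/2}$.
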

Furthermore, we give a parabolic version of the interpolation inequality as follows.
\begin{Cor} \label{P}
Suppose $j+\beta<k+\alpha$, $j,k\in\mathbb{N}$, $\alpha,\beta\in[0,1]$, and $u\in C^{k,\alpha}_\mathcal {X}(\Omega_T)$. Then for any $\varepsilon>0$, there exists a constant $C(\varepsilon,j,k,\Omega_T)$ such that
\begin{equation}\label{928}
 \|u\|_{C^{j,\beta}_\mathcal {X}(\Omega_T)}\leq \varepsilon\|u\|_{C^{k,\alpha}_\mathcal {X}(\Omega_T)}+C\|u\|_{L^{\infty}(\Omega_T)}.
\end{equation}
\end{Cor}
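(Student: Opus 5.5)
The plan is to deduce Corollary \ref{P} from the stationary interpolation inequality, Lemma \ref{IE}. I will handle the spatial derivatives slice by slice in time, and treat the time direction and the mixed parabolic H\"{o}lder quotients by a one-variable interpolation in $t$ combined with a splitting of the increment quotients.

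First I expand $\|u\|_{C^{j,\beta}_{\mathcal X}(\Omega_T)}$ via \eqref{21.9} into terms $\|\partial_t^{i}X^Iu\|_{C^{0,\beta}_{\mathcal X}(\Omega_T)}$ with $|I|+2i\le j$. Each such term is the sum of a sup-norm and a parabolic H\"{o}lder seminorm with respect to $d_P$. I split the seminorm into a spatial part (equal times) and a temporal part (equal points), using $d_P((t,x),(s,y))\ge \max\{d_c(x,y),|t-s|^{1/2}\}$ together with the triangle inequality through the point $(s,x)$.

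For fixed $t$, I apply Lemma \ref{IE} to $u(t,\cdot)$ on $\Omega$. This controls $\sup_t\|X^Iu(t,\cdot)\|_{C^{|I|,\beta}_{\mathcal X}(\Omega)}$ by $\varepsilon\sup_t\|u(t,\cdot)\|_{C^{k,\alpha}_{\mathcal X}(\Omega)}+C\|u\|_{L^\infty}$. The constant is uniform in $t$ because it depends only on $\Omega$, and the parabolic norm dominates the sliced norms.

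For terms with time derivatives or time increments, I use the parabolic scaling. Let $w=\partial_t^iX^Iu$. For a time increment $|t-s|=h$, I compare two regimes. If $h\ge\delta$, the quotient is bounded by $2\delta^{-\beta/2}\|w\|_{L^\infty}$, and $\|w\|_{L^\infty}$ is handled by the previous step or by induction on the order. If $h<\delta$, I write $w(t)-w(s)$ using the next-order quantity. When $|I|+2i+2\le k$ this is $\partial_t w$, and the quotient is at most $h^{1-\beta/2}\|\partial_tw\|_{L^\infty}\le\delta^{1-\beta/2}\|u\|_{C^{k,\alpha}}$. When we are already at top order, I use the $\alpha$-H\"{o}lder control from $C^{k,\alpha}$, which gives $h^{(\alpha-\beta)/2}\|u\|_{C^{k,\alpha}}$ in the case of equal order with $\beta<\alpha$. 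Choosing $\delta$ small relative to $\varepsilon$ produces the $\varepsilon$ term, and the large-increment piece feeds back into lower-order norms.

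Iterating from lowest to highest order, each lower-order norm is absorbed by the same $\varepsilon$–$C$ trade. This gives \eqref{928} with $C=C(\varepsilon,j,k,\Omega_T)$.

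The main obstacle is the pure time direction when $k<2$, or more generally when no $\partial_t$-derivative is available. There, control of $w(t)-w(s)$ must come from the $t$-part of the $C^{0,\alpha}$ seminorm of $X^Iu$ itself. I will rely on the fact that $|t-s|^{\beta/2}\le|t-s|^{\alpha/2}\delta^{(\beta-\alpha)/2}$ fails when $\beta>\alpha$ and $j<k$. In that case I instead interpolate $X^Iu$ against $X^{I'}u$ with $|I'|=|I|+1$ using the spatial mean-value bound along subunit curves, and then repeat the two-regime argument above. I expect checking this case bookkeeping to be the delicate part.
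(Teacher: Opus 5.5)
Your route is the paper's route. You split the $d_P$-quotient into a spatial and a temporal part, apply Lemma \ref{IE} on time slices, and handle $t$ by a one-variable argument; the paper only says the time part is done ``as in the classical proof''. The gap is the case you defer as bookkeeping, and it cannot be closed, because there the statement is false.

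Your remedy along subunit curves needs a horizontal derivative to peel off. If $w=X_{i_1}v$, Taylor's formula along the integral curve of $X_{i_1}$ at scale $r=h^{1/2}$ gives $|w(t,x)-w(s,x)|\le Ch^{1/2}(\|X_{i_1}^2v\|_{L^\infty}+\|\partial_tv\|_{L^\infty})$. The terms that actually obstruct are the pure time derivatives $\partial_t^iu$ with $2i=k-1$ (e.g.\ $u$ itself when $k=1$). They carry no $X$-derivative, $\partial_t^{i+1}u$ does not occur in \eqref{21.9}, and \eqref{21.9} controls them in $t$ only with exponent $\alpha/2$.

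Concretely, take $u(t,x)=\phi(t)$, so $X^Iu=0$ for $|I|\ge1$. For odd $k$ the index sets $\{|I|+2i\le k-1\}$ and $\{|I|+2i\le k\}$ differ only by terms with $|I|$ odd. Hence $\|u\|_{C^{k-1,\alpha}_{\mathcal{X}}(\Omega_T)}=\|u\|_{C^{k,\alpha}_{\mathcal{X}}(\Omega_T)}$. Then \eqref{928} with $j=k-1$, $\beta=\alpha$, $\varepsilon=\frac12$ would give $\|u\|_{C^{k,\alpha}_{\mathcal{X}}(\Omega_T)}\le 2C\|\phi\|_{L^\infty}$. For $k=3$ and $\phi(t)=\sin(Nt)$ the left side is at least $\|\partial_tu\|_{L^\infty}=N$, which is a contradiction as $N\to\infty$; $\beta>\alpha$ is only worse. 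The paper's appeal to the classical proof hides the same defect. Classical odd-order parabolic H\"older spaces require $\frac{1+\alpha}{2}$-H\"older continuity in $t$ of the derivatives of order $k-1$, and \eqref{21.9} does not.

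What can be proved is the statement with $k$ even, which covers the paper's only use ($k=2$); alternatively, for odd $k$ one may allow $j=k-1$ only when $\beta<\alpha$. For even $k$, every term of order $k-1$ has $|I|$ odd, hence $|I|\ge1$, and the curve estimate above applies.

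You need that estimate already at $\beta=\alpha$, not only for $\beta>\alpha$ as you wrote. At $\beta=\alpha$ the direct comparison gives the factor $h^{(\alpha-\beta)/2}=1$, not a small one. This is exactly the case the paper needs: $k=2$, $j=1$, $\beta=\alpha$, for the lower-order terms in Step 2 of the proof of Proposition \ref{prop13}. For $\beta<1$ the curve estimate yields the small factor $\delta^{(1-\beta)/2}$. For $\beta=1$, which forces $\alpha>0$, you must combine it with the sup-norm interpolation of the order-$k$ quantities $X_{i_1}^2v$ and $\partial_tv$.

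That sup-norm interpolation is the other piece still missing. For terms containing $\partial_t$, neither Lemma \ref{IE} nor your two-regime argument yields $\|\partial_t^iX^Iu\|_{L^\infty}\le\varepsilon\|u\|_{C^{k,\alpha}_{\mathcal{X}}}+C\|u\|_{L^\infty}$; the two-regime argument bounds H\"older quotients, not sup norms. You need the reverse step. Compare $\partial_t w$ with the difference quotient $(w(t+h)-w(t))/h$, with error at most $h^{\alpha/2}[\partial_tw]_{C^{0,\alpha}_{\mathcal{X}}}$, and induct on the order.
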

\begin{proof}
By the definition of $d_P$ given in \eqref{929}, for any $(t,x),(s,y)\in\Omega_T$, we have
 $$\frac{|u(t,x)-u(s,y)|}{d^\alpha_P((t,x),(s,y))}\leq
 \frac{|u(t,x)-u(t,y)|}{d^\alpha_{c}(x,y)}+\frac{|u(t,y)-u(s,y)|}{|t-s|^\frac{\alpha}{2}}.$$

Since the case of the $t$-coordinate can be treated in a similar manner as the classical proof, combined with \eqref{927}, the \eqref{928} is valid.
\end{proof}
Recall the Poincar\'{e} inequality with respect to the spatial variable, stated below.
\begin{Prop}\label{poin}{\rm{(Theorem 2.1 in \cite{J})}}
There are positive constants $R_0$ and $C$ such that for any $0<R\leq R_0$, then
\begin{align}\label{p11}
\int_{B_{R}}|u-(u)_{B_R}|^2dx\leq CR^2\int_{B_R}|Xu|^2dx,
\end{align}
where $(u)_{B_R}$ is the average of $u$ on $B_R$.
\end{Prop}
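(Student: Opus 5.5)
The plan is to follow Jerison's two-stage strategy. First I prove a \emph{weak} Poincar\'e inequality, with an enlarged ball on the right-hand side:
\begin{equation*}
\int_{B_R}|u-(u)_{B_R}|^2dx\leq CR^2\int_{B_{\kappa R}}|Xu|^2dx,\qquad 0<R\leq R_0 .
\end{equation*}
Here $\kappa>1$ depends only on $\mathcal X$ and $\Omega'$. I then upgrade it to \eqref{p11} by a covering argument that uses only the doubling property \eqref{2.8} and the geometry of the CC balls. By density it suffices to treat $u\in C^\infty$. I also use
\begin{equation*}
\int_{B_R}|u-(u)_{B_R}|^2\,dx\leq \frac{1}{|B_R|}\int_{B_R}\int_{B_R}|u(x)-u(y)|^2\,dy\,dx ,
\end{equation*}
so it is enough to bound the double integral of $|u(x)-u(y)|^2$.

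\textbf{Step 1 (weak inequality).} I use the Nagel--Stein--Wainger ball--box theorem. For $x\in\Omega'$ and small $R$, choose an $R$-admissible family $Y_1,\dots,Y_d$ of commutators $X^{[I]}$ that realises $\max|\det(Y_1,\dots,Y_d)|R^{|I_1|+\cdots+|I_d|}$. The map
\begin{equation*}
\Phi_x(h)=\exp\Big(\sum_j h_jY_j\Big)(x),\qquad |h_j|<R^{|I_j|},
\end{equation*}
then covers $B_{cR}(x)$, lies inside $B_{CR}(x)$, and has Jacobian comparable to $|B_R(x)|$ divided by the box volume.

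Each flow $\exp(sX^{[I]})$ is approximated by a product of flows of the $X_k$ (the standard commutator formula, iterated). So $\Phi_x(h)$ is reached from $x$ by a concatenation of at most $N=N(\mathrm k)$ segments of integral curves of the $X_k$, each of length $\lesssim R$. Along each segment, $|u(\text{end})-u(\text{start})|\le\int|X_ku|$.

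Average the resulting estimate over $h$ in the box, and over the intermediate points of the path. Changing variables back through the (controlled) Jacobians, each segment contributes at most
\begin{equation*}
CR^2\,\frac{1}{|B_R|}\int_{B_{\kappa R}}|Xu|^2 .
\end{equation*}
Cauchy--Schwarz on the path length accounts for the factor $R^2$. Summing the at most $N$ segments gives the weak inequality.

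\textbf{Step 2 (removing the enlargement).} This is Jerison's Whitney-decomposition argument. Cover $B_R$ by a Whitney family of CC balls $B_i$, where the enlarged balls $\kappa B_i$ still lie in $B_R$ and have bounded overlap. Link every $B_i$ to a fixed central ball by a Boman chain of comparable, overlapping balls; this is possible because CC balls are John domains, since points are joined to the centre by subunit geodesics inside the ball.

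Apply the weak inequality on each ball $B_i$. The differences of averages along a chain are controlled by the local oscillations, using doubling. A maximal-function (Boman) lemma then sums these contributions in $L^2$, with constants depending only on the doubling constant and $\kappa$. The result is \eqref{p11} with $B_R$ on both sides.

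The main obstacle is Step 1, specifically the Jacobian control when integrating along the composite paths. One must check that the maps $(x,h,\text{intermediate parameters})\mapsto$ path points have Jacobians bounded below uniformly in $R$ and $x$. This is exactly where the Nagel--Stein--Wainger estimates (which need the H\"ormander condition, and which hold uniformly on $\Omega'$) enter. My first approach would be to freeze $x$ and use the uniform comparability of $\Phi_x$ with its linearisation on the box, together with the fact that each single flow $\exp(sX_k)$ preserves Lebesgue measure up to a factor $1+O(R)$.
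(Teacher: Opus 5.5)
Your overall architecture (a weak inequality with an enlarged ball, then a Whitney/Boman-chain upgrade) is the same as in Jerison's Theorem 2.1, which the paper cites without proof. Step 2 is fine as sketched: it uses doubling and the John property of CC balls, which holds because $d_c$ is a length metric. The gap is in Step 1, at the inference ``$\exp(sX^{[I]})$ is approximated by a product of flows of the $X_k$, so $\Phi_x(h)$ is reached by a concatenation of $X_k$-segments.'' For $|I|=\ell$, the commutator formula gives a composite $C_I(s)$ of $X_k$-flows with times $\pm|s|^{1/\ell}$ and $C_I(s)x=\exp\big(sX^{[I]}+O(|s|^{(\ell+1)/\ell})\big)x$. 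The remainder is a combination of longer commutators whose coefficients have exactly the order matching their length. So the path you build ends at $E_x(h)=C_{I_1}(h_1)\circ\cdots\circ C_{I_d}(h_d)(x)$. This point lies at $d_c$-distance of order $R$ (not $o(R)$) from $\Phi_x(h)$, and is not $\Phi_x(h)$ itself. Since $u$ is controlled only through $Xu$, the discrepancy $u(\Phi_x(h))-u(E_x(h))$ is as large as the quantity you are estimating, so it cannot be discarded. If instead you correct the flow times so as to land exactly on $\Phi_x(h)$, the times depend on $(x,h)$ in an uncontrolled way. Neither the NSW estimates for $\Phi_x$ nor the near measure-preservation of single flows then gives the Jacobian bounds you need.

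The obstacle you flag, Jacobians bounded below in $(x,h,\tau)$, is also not where the difficulty lies. If the flow times depend only on $h$, then for fixed $h$ and a fixed intermediate time, the map from $x$ to the corresponding point on the path is a composite of flows with Jacobian $1+O(R)$. So integrating in $x$ is harmless. What is really needed is that the endpoint map $h\mapsto E_x(h)$ covers $B_{2R}(x)$ from a box of size comparable to $R$. It must also have Jacobian bounded \emph{above} by $C|B_R|/|\mathrm{box}|$; by the area formula no injectivity is needed. These bounds must hold uniformly for $x$ in the ball, with the family $(I_1,\dots,I_d)$ frozen at the centre. This is a ball--box theorem for approximate exponentials. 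It does not follow from NSW; it is Morbidelli's theorem, and with it your path-integration argument goes through (this is the Lanconelli--Morbidelli approach). The route in Jerison's paper is different. There one pulls $RX_j$ back under the rescaled NSW map to the unit box, where the new fields satisfy H\"{o}rmander's condition with bounds uniform in $(x,R)$. One then proves a unit-scale weak inequality with constants uniform over that family of vector fields. One of these two ingredients has to be supplied; as written, Step 1 does not establish the weak inequality.
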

Then we state an iterative lemma as follows.
\begin{Lem}{\rm{(Lemma 2.1 in \cite{G})\label{A4}}}
Let $\phi(t)$ be a nonnegative and nondecreasing function. Suppose that
$$\phi(\rho)\leq C_1\Big(\left(\frac{\rho}{R}\Big)^\alpha+\varepsilon\right)\phi(R)+C_2R^\beta,$$
for all $\rho< R\leq R_0$, with $C_1,\ C_2,\ \alpha,\ \beta$ nonnegative constants, $\beta<\alpha$. Then there exists constant $\varepsilon_0=\varepsilon_0(C_1,\ \alpha,\ \beta)$ such that if $\varepsilon<\varepsilon_0$, for all $\rho< R\leq R_0$, we have
$$\phi(\rho)\leq C\Big(\left(\frac{\rho}{R}\right)^\beta\phi(R)+C_2\rho^\beta\Big),$$
where $C$ is a positive constant depending on $C_1,\ \alpha,\ \beta$.
\end{Lem}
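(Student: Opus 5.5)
The plan is the standard geometric iteration: first improve the hypothesis at one fixed ratio $\tau$, then iterate along the scales $\tau^kR$, and finally fill in the intermediate radii using the monotonicity of $\phi$. I will use only the hypothesis of Lemma \ref{A4} and the fact that $\phi$ is nonnegative and nondecreasing. Without loss of generality assume $C_1\geq 1$, since enlarging $C_1$ only weakens the hypothesis.

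\textbf{Fixing $\tau$ and $\varepsilon_0$.} Put $\gamma:=\frac{\alpha+\beta}{2}$, so that $\beta<\gamma<\alpha$. Choose $\tau\in(0,1)$ with $2C_1\tau^{\alpha}=\tau^{\gamma}$, that is, $\tau=(2C_1)^{-1/(\alpha-\gamma)}$; this $\tau$ depends only on $C_1,\alpha,\beta$. Set $\varepsilon_0:=\tau^{\alpha}$. If $\varepsilon<\varepsilon_0$, applying the hypothesis with $\rho=\tau R$ gives
\[
\phi(\tau R)\leq C_1\tau^{\alpha}(1+\tau^{-\alpha}\varepsilon)\phi(R)+C_2R^{\beta}\leq 2C_1\tau^{\alpha}\phi(R)+C_2R^\beta=\tau^{\gamma}\phi(R)+C_2R^{\beta}
\]
for every $R\leq R_0$.

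\textbf{Iteration.} Apply this inequality successively with $R$ replaced by $\tau^{j}R$. By induction on $k$,
\[
\phi(\tau^{k}R)\leq \tau^{k\gamma}\phi(R)+C_2R^{\beta}\sum_{j=0}^{k-1}\tau^{(k-1-j)\gamma}\tau^{j\beta}.
\]
Factor $\tau^{(k-1)\beta}$ out of the sum. What remains is $\sum_{i\ge0}\tau^{i(\gamma-\beta)}\leq (1-\tau^{\gamma-\beta})^{-1}$, a geometric series that converges precisely because $\gamma>\beta$. Using also $\tau^{k\gamma}\leq\tau^{k\beta}$, we obtain
\[
\phi(\tau^kR)\leq C\tau^{k\beta}\big(\phi(R)+C_2R^\beta\big),
\]
with $C$ depending only on $C_1,\alpha,\beta$.

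\textbf{Arbitrary $\rho<R$.} Choose $k\geq0$ with $\tau^{k+1}R\leq\rho<\tau^kR$. Monotonicity gives $\phi(\rho)\leq\phi(\tau^kR)$, and $\tau^{k\beta}\leq\tau^{-\beta}(\rho/R)^\beta$. Hence
\[
\phi(\rho)\leq C\tau^{-\beta}\Big(\big(\tfrac{\rho}{R}\big)^\beta\phi(R)+C_2\rho^\beta\Big),
\]
which is the claim. There is no genuine obstacle here. The only delicate point is choosing $\tau$ and $\varepsilon_0$ so that the smallness of $\varepsilon$ is absorbed into the factor $\tau^\gamma$, and the intermediate exponent $\gamma$ strictly between $\beta$ and $\alpha$ is what makes the geometric sum converge.
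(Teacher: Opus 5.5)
Your proof is correct and follows the standard argument of the cited source; the paper gives no proof of its own and simply quotes Lemma 2.1 of Giaquinta. That argument fixes $\tau$ with $2C_1\tau^\alpha=\tau^\gamma$ for some $\gamma\in(\beta,\alpha)$, takes $\varepsilon_0=\tau^\alpha$, iterates along the scales $\tau^kR$ while summing a geometric series in $\tau^{\gamma-\beta}$, and fills in intermediate radii by monotonicity. Your normalization $C_1\ge 1$, which guarantees $\tau<1$, makes explicit a detail that the usual write-up leaves implicit.
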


By the Rothschild-Stein lifting theorem given in \cite{RS77}, we state that a family of vector fields $\mathcal {X}=\{X_1,\ldots,X_q\}$ can be lifted by adding extra variables to free vector fields $Y=\{Y_1,\ldots,Y_q\}$. In order to simplify, we still denote the vector fields as $\mathcal {X}$. 
\begin{Lem}\label{xue2lem22}{\rm{(Lemma 2.2 in \cite{XZ97})}}
For any $x_0$ in $\Omega$ one can find coordinates in a neighborhood $V$ of $x_0$, and a matrix $T(x)\in GL(q,\mathbb{R})$ which is $C^\infty$ in $V$ such that if we set
$(Y_1,\ldots,Y_q)=T(t)(X_1,\ldots,X_q)$ we have

\noindent\rm{(i)} the set $(Y_1,\ldots,Y_q)$ satisfies the H\"{o}rmander's condition of order $r$ and is free up
to the order $r$ in $V$;

\noindent\rm{(ii)} $Y_j=\partial_{x_j}+\sum^{n}_{k=q+1}g_{jk}\partial_{x_k}$, $1\leq j\leq q$.
\end{Lem}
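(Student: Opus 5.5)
The plan is to build $T(x)$ by normalizing the $q\times q$ block of coefficients of $\mathcal X$ in suitable coordinates, and then to check that both properties (i) and (ii) survive this change of frame. Throughout I take $\mathcal X$ already lifted via Rothschild--Stein as described just before the statement, so that $\mathcal X$ is free up to step $r$ and satisfies H\"{o}rmander's condition of order $r$ near $x_0$.

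\textbf{Step 1: choice of coordinates.} Freeness at step $1$ means $X_1,\ldots,X_q$ satisfy no linear relation. In particular $X_1(x_0),\ldots,X_q(x_0)$ are linearly independent. Complete them to a basis of $\mathbb R^d$ using commutators $X^{[I]}(x_0)$, which is possible by H\"{o}rmander's condition. After an affine change of coordinates we may assume $X_j(x_0)=\partial_{x_j}$ for $1\le j\le q$. Write
\[
X_l=\sum_{i=1}^q a_{il}(x)\,\partial_{x_i}+\sum_{k=q+1}^d b_{kl}(x)\,\partial_{x_k},\qquad A(x)=(a_{il}(x))_{i,l\le q}.
\]
Since $A(x_0)=I$ and $A$ is $C^\infty$, the matrix $A(x)$ is invertible on a small neighborhood $V$ of $x_0$.

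\textbf{Step 2: definition of $T$ and property (ii).} Set $T(x)=A(x)^{-1}\in GL(q,\mathbb R)$, which is smooth on $V$, and define $Y_j=\sum_l T_{jl}X_l$. A direct computation gives
\[
Y_j=\partial_{x_j}+\sum_{k>q} g_{jk}\,\partial_{x_k},\qquad g_{jk}=\sum_l T_{jl}\,b_{kl}.
\]
This is exactly (ii).

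\textbf{Step 3: property (i).} Let $\mathcal V_s$ be the $C^\infty(V)$-module spanned by the commutators of $\mathcal X$ of weight $\le s$, and $\mathcal W_s$ the analogous module for $Y$. By induction on $s$, using $[fX,gZ]=fg[X,Z]+f(Xg)Z-g(Zf)X$, every commutator of $Y$ of weight $s$ equals the corresponding $T$-weighted combination of weight-$s$ commutators of $\mathcal X$, plus an element of $\mathcal V_{s-1}$. Hence $\mathcal W_s\subset\mathcal V_s$. Since $T$ is invertible, the same argument with $T^{-1}$ gives $\mathcal V_s\subset\mathcal W_s$, so $\mathcal W_s=\mathcal V_s$ pointwise on $V$. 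With $s=r$ this gives H\"{o}rmander's condition of order $r$ for $Y$.

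For freeness, let $N_s$ be the dimension of the free nilpotent Lie algebra of step $s$ on $q$ generators. Freeness of $\mathcal X$ up to step $r$ means $\dim\mathcal V_s(x)=N_s$ for $s\le r$. A Hall basis of the $Y$-commutators of weight $\le s$ has exactly $N_s$ elements and spans $\mathcal W_s(x)=\mathcal V_s(x)$, which has dimension $N_s$. Therefore these elements are linearly independent, so $Y$ satisfies no relations beyond antisymmetry and Jacobi, i.e. $Y$ is free up to step $r$. Shrinking $V$ if needed keeps all of this valid uniformly.

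The main obstacle is Step 3: making precise that multiplication by a nonconstant invertible matrix preserves the graded filtration, and that freeness is equivalent to the pointwise dimension count $\dim\mathcal V_s(x)=N_s$. I would handle the first point by the inductive bracket identity above, and the second through the Hall-basis characterization of free nilpotent Lie algebras.
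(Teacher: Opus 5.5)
Your argument is correct. The paper gives no proof of this lemma: it is quoted from Xu--Zuily and applied to the Rothschild--Stein lifted fields. Your route is the standard argument for this normalization:
\begin{itemize}
\item step-one freeness gives pointwise independence of $X_1(x_0),\ldots,X_q(x_0)$;
\item a linear change of coordinates makes these vectors $\partial_{x_1},\ldots,\partial_{x_q}$ at $x_0$;
\item the $q\times q$ coefficient block is inverted;
\item a smooth invertible change of frame preserves the modules $\mathcal V_s$, and hence preserves both the H\"{o}rmander condition of order $r$ and the count $\dim\mathcal V_s(x)=N_s$ that characterizes freeness.
\end{itemize}

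There is one slip to fix in Step 2. With your convention that $a_{il}$ is the coefficient of $\partial_{x_i}$ in $X_l$, the $\partial_{x_i}$-coefficient of $Y_j=\sum_l T_{jl}X_l$ is $\sum_l T_{jl}a_{il}=(TA^{T})_{ji}$. So you must take $T=(A^{T})^{-1}$, not $A^{-1}$; the two coincide at $x_0$ but not in general. With this choice $g_{jk}=\sum_l T_{jl}b_{kl}$ and nothing else changes.

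Also note that the ``in particular'' in Step 1 is valid only because freeness is meant pointwise, in the Rothschild--Stein sense. That is the same reading your dimension count uses in Step 3, so the argument is consistent.
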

\begin{Lem}\label{xue2cor22}{\rm{(Lemma 2.3 in \cite{XZ97})}}
Let us set $(x',x'')\in\mathbb{R}^d$ with $x'
\in\mathbb{R}^q$ and for $k\in\mathbb{N}_0$ and $(C_J)_{|J|\leq k}$ in $\mathbb{R}^d$,
$$h(x')=\sum_{|J|\leq k}\frac{1}{J!}C_J(x'-x_0')^J.$$
Then for any $I$ with $|I|=k$, we have $Y_Ih=C_I$.
\end{Lem}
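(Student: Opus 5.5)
The plan is to exploit the special form of the fields given in Lemma \ref{xue2lem22}(ii), namely $Y_j=\partial_{x_j}+\sum_{k=q+1}^{n}g_{jk}\partial_{x_k}$ for $1\leq j\leq q$, together with the fact that $h$ depends only on the first $q$ coordinates $x'=(x_1,\ldots,x_q)$. The argument reduces $Y_I h$ to an ordinary iterated partial derivative of a polynomial and then reads off the coefficient.

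\emph{Step 1.} Let $\psi=\psi(x')$ be any smooth function of $x'$ alone. Then $\partial_{x_k}\psi=0$ for $k\geq q+1$, so $Y_j\psi=\partial_{x_j}\psi$ for $1\leq j\leq q$. Moreover $\partial_{x_j}\psi$ is again a function of $x'$ alone. Write $I=(i_1,\ldots,i_k)$ with $1\leq i_l\leq q$. By induction on the length of $I$, applying the fields one at a time from the innermost, we get
$$Y_I h=\partial_{x_{i_1}}\partial_{x_{i_2}}\cdots\partial_{x_{i_k}}h .$$
In this reduction the non-commutativity of the $Y_j$ plays no role: the variable coefficients $g_{jk}$ only ever multiply derivatives in the $x''$ directions, and these vanish identically on functions of $x'$. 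If the $C_J$ are vectors in $\mathbb{R}^d$, the argument is applied componentwise.

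\emph{Step 2.} The right-hand side is now a constant-coefficient derivative, so the order of differentiation is irrelevant. It equals $\partial^{\beta}h$, where $\beta\in\mathbb{N}_0^q$ is the multi-index recording how many times each index $j$ occurs in $I$. In particular $|\beta|=k$.

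\emph{Step 3.} Differentiate $h$ term by term.
\begin{itemize}
\item A monomial $(x'-x_0')^J/J!$ with $|J|<k$ has degree less than $k$, so a $k$-th order derivative annihilates it.
\item For $|J|=k$, $\partial^\beta (x'-x_0')^J=0$ unless $J=\beta$.
\item For $J=\beta$, $\partial^\beta (x'-x_0')^\beta=\beta!$.
\end{itemize}
Hence $\partial^\beta h=\frac{1}{\beta!}C_\beta\,\beta!=C_\beta$. This is $C_I$ under the (implicit) convention that an ordered index $I$ is identified with the multi-index $\beta$ counting its entries.

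The only point needing care is Step 1: checking that each application of $Y_{i_l}$ preserves the class of functions of $x'$ alone, so that the lower-order correction terms never act nontrivially. Beyond this, the lemma is a direct Taylor-coefficient computation.
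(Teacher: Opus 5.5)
Your proof is correct and is the standard argument for this lemma, which the paper quotes from Xu--Zuily without proof. Because $h$ depends only on $x'$, each $Y_j$ with $j\le q$ acts as $\partial_{x_j}$ and maps functions of $x'$ to functions of $x'$. Hence $Y_Ih$ is an ordinary $k$-th order derivative of a polynomial, and it returns the Taylor coefficient $C_I$. Your reading of $C_I$ as $C_\beta$, with $\beta$ the multi-index counting the entries of $I$, is also the right way to make the statement consistent.
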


Then  we give a version of comparison principle for viscosity solutions as follows.
\begin{Prop}\label{CP}{\rm{(Theorem 4.4.5 in \cite{P09})}}
Let $v$ (resp. $w$) be a upper-semicontinuous viscosity subsolution (resp. lower-semicontinuous viscosity supersolution) with polynomial growth condition to the following HJE
\begin{equation}\label{hje0}
-\partial_t u-\frac{1}{2}{\rm{tr}}(\sigma\sigma^{T}(x) D^2u)+H(t,x,Du)=0,\quad\text{on}\ [0,T)\times\mathbb{R}^d,
\end{equation}
with a Hamiltonian
$H(t,x,Du)=\sup_{\alpha\in\mathcal {A}}\{b(x,\alpha)Du+f(t,x,\alpha)\}$ for $\mathcal {A}$ a subset of $\mathbb{R}^q$. Assume that the coefficient $b$ satisfies a linear growth condition in $x$ uniformly in $\alpha\in\mathcal {A}$.
 If $w(T,\cdot)\leq v(T,\cdot)$ on $\mathbb{R}^d$, then $w\leq v$ on $[0,T)\times\mathbb{R}^d$.
\end{Prop}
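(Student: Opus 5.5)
We follow the standard doubling-of-variables argument for second-order viscosity solutions, as in \cite{P09}. Throughout, let $\bar v$ be the subsolution and $\bar w$ the supersolution, so the statement reads: $\bar v(T,\cdot)\le\bar w(T,\cdot)$ implies $\bar v\le\bar w$ on $[0,T)\times\mathbb{R}^d$. We use the structural assumptions implicit in the cited theorem: $\sigma$ and $b(\cdot,\alpha)$ are Lipschitz in $x$ uniformly in $\alpha$, and $f$ is continuous in $x$ uniformly in $\alpha$. These are the hypotheses under which \eqref{SDE} is well posed. The plan has three steps: remove the growth at infinity, make the subsolution strict, and run Ishii's lemma on a penalized maximum.

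\emph{Step 1 (growth and strictness).} Since $\bar v,\bar w$ have polynomial growth, fix $p$ larger than their growth exponent and set $\psi(t,x)=e^{\lambda(T-t)}(1+|x|^2)^{p/2}$. The linear growth of $b$ and $\sigma$ gives $|b\cdot D\psi|+|{\rm tr}(\sigma\sigma^T D^2\psi)|\le C\psi$. Hence for $\lambda$ large, $\psi$ satisfies $-\partial_t\psi-\frac12{\rm tr}(\sigma\sigma^TD^2\psi)-\sup_\alpha b\cdot D\psi\ge \psi>0$. Since the Hamiltonian is a supremum of linear functions, it is convex and $1$-Lipschitz in the gradient direction along $b$. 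It follows that $\bar w_\varepsilon=\bar w+\varepsilon\psi$ is a \emph{strict} supersolution, with defect at least $\varepsilon\psi$. Moreover $\bar v-\bar w_\varepsilon\to-\infty$ as $|x|\to\infty$, uniformly in $t$. It therefore suffices to show $\bar v\le \bar w_\varepsilon$ for every $\varepsilon>0$ and then let $\varepsilon\to0$.

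\emph{Step 2 (doubling).} Suppose, for contradiction, that $M:=\sup(\bar v-\bar w_\varepsilon)>0$. Consider
\[
\Phi_n(t,x,y)=\bar v(t,x)-\bar w_\varepsilon(t,y)-\tfrac n2|x-y|^2 .
\]
By upper semicontinuity and the decay from Step 1, $\Phi_n$ attains its maximum at some $(t_n,x_n,y_n)$ in a fixed compact set. The standard argument gives $n|x_n-y_n|^2\to0$, and along a subsequence $(t_n,x_n),(t_n,y_n)\to(\hat t,\hat x)$ with $\bar v(\hat t,\hat x)-\bar w_\varepsilon(\hat t,\hat x)=M$. Because $\bar v\le\bar w_\varepsilon$ at $t=T$, we get $\hat t<T$, so $t_n<T$ for $n$ large.

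\emph{Step 3 (Ishii's lemma and contradiction).} This is the main obstacle. We apply the parabolic Crandall--Ishii lemma at $(t_n,x_n,y_n)$. It yields $(a,p_n,A)$ in the parabolic superjet of $\bar v$ and $(a,p_n,B)$ in the subjet of $\bar w_\varepsilon$, with $p_n=n(x_n-y_n)$ and
\[
\begin{pmatrix}A&0\\0&-B\end{pmatrix}\le 3n\begin{pmatrix}I&-I\\-I&I\end{pmatrix}.
\]
Subtracting the viscosity inequalities, the strictness of $\bar w_\varepsilon$ gives
\[
\varepsilon\psi(t_n,y_n)\le \tfrac12\big[{\rm tr}(\sigma\sigma^T(x_n)A)-{\rm tr}(\sigma\sigma^T(y_n)B)\big]+\sup_\alpha\big[|b(x_n,\alpha)-b(y_n,\alpha)||p_n|+|f(t_n,x_n,\alpha)-f(t_n,y_n,\alpha)|\big].
\]
Test the matrix inequality against $(\sigma(x_n)\xi,\sigma(y_n)\xi)$ and sum over a basis. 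Using the Lipschitz continuity of $\sigma$, the trace difference is bounded by $3n\,\|\sigma(x_n)-\sigma(y_n)\|^2\le Cn|x_n-y_n|^2$. The drift term is also at most $Cn|x_n-y_n|^2$, and the $f$ term tends to $0$ by uniform continuity on compacts. Letting $n\to\infty$, the right-hand side tends to $0$, while the left-hand side stays bounded below by $\varepsilon\min\psi>0$. This contradiction shows $\bar v\le\bar w_\varepsilon$, and letting $\varepsilon\to0$ concludes the proof.

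The delicate points are the quadratic cancellation in the trace term, which requires exactly Lipschitz $\sigma$ and is where the degeneracy of $\sigma\sigma^T$ is harmless, and the check that $\psi$ is an admissible strict supersolution correction under linear growth.
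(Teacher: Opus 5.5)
Your strategy is the standard proof of the comparison result the paper cites from \cite{P09}; the paper itself gives no argument. That strategy is: make the supersolution strict with $\varepsilon\psi$, double variables, apply the parabolic Ishii lemma, and use Lipschitz continuity of $\sigma$ and $b$ to kill the trace and drift terms. You were right to put sub- and supersolution back in the correct order. You were also right to add Lipschitz continuity of $\sigma,b$ in $x$ uniformly in $\alpha$ and uniform continuity of $f$. The printed statement omits these, and the doubling argument genuinely needs them; tacitly you also need $H$ finite and continuous, e.g.\ $\mathcal A$ compact.

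\emph{The gap.} In Step 2 you claim that, by upper semicontinuity and the decay from Step 1, $\Phi_n$ attains its maximum over $[0,T]\times\mathbb{R}^d\times\mathbb{R}^d$ in a fixed compact set. This fails as written. The decay from Step 1 concerns only $\bar v(t,x)-\bar w_\varepsilon(t,x)$, i.e.\ the diagonal $x=y$. In $\Phi_n$ the correction $\varepsilon\psi$ acts on the $y$-variable alone, so the growth of $\bar v$ in $x$ is checked only by $-\frac n2|x-y|^2$. Polynomial growth allows $\bar v(t,x)\ge c|x|^k-C$ with $k>2$, which you anticipated when choosing $p$ above the growth exponent. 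Then, fixing $y$ and letting $|x|\to\infty$, $\Phi_n\to+\infty$. There is no maximizer, and Ishii's lemma has nothing to act on. As written, your argument covers only growth of order at most $2$.

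\emph{The repair} is routine, and there are two ways to do it.

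First option: localize. Choose $R$ with $\bar v-\bar w_\varepsilon<0$ on $[0,T]\times\{|x|\ge R\}$, and maximize $\Phi_n$ over the compact set $[0,T]\times\{|x|\le 2R\}^2$. The maxima $M_n$ are nonincreasing and bounded below by $M$, hence convergent, which forces $n|x_n-y_n|^2\to0$. Together with upper semicontinuity, this shows that every limit point $(\hat t,\hat x,\hat x)$ of the maximizers is a maximum point of $\bar v-\bar w_\varepsilon$ with value $M>0$. Hence $|\hat x|<R$ and $\hat t<T$. For large $n$ the maximizer is therefore a local maximum in the relatively open set $[0,T)\times\{|x|<2R\}^2$. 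Your Step 3 then goes through unchanged, and the $f$-term still vanishes because everything stays in a compact set.

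Second option: penalize symmetrically. The computation of Step 1 equally shows that $\bar v-\varepsilon\psi$ is a strict subsolution with defect $\varepsilon\psi$. With it in place of $\bar v$, $\Phi_n$ becomes coercive in $(x,y)$ jointly, uniformly in $n$. The final inequality then reads $\varepsilon\big(\psi(t_n,x_n)+\psi(t_n,y_n)\big)\le o(1)$, which is again a contradiction.

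There is also a harmless sign slip in Step 1. Since the equation contains $+H$, what must be absorbed is $\varepsilon\sup_\alpha|b\cdot D\psi|$, not $\varepsilon\sup_\alpha b\cdot D\psi$. Your bound $|b\cdot D\psi|\le C\psi$ already covers it.
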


Finally, we recall the Schauder fixed point theorem and a variant of Lax-Milgram theorem as follows.
\begin{Lem}\label{APP133}{\rm{(Theorem 9.5 in \cite{An})}}
Let $E$ be a close bounded convex subset of a normed space $\mathcal {B}$. If $f:E\to\mathcal {B}$ is a compact map such that $f(E)\subseteq E$, then there exists a point $x\in E$  such that $f(x)=x$.
\end{Lem}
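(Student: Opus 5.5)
Since the statement is the classical Schauder fixed point theorem, I would derive it from Brouwer's theorem by finite-dimensional approximation. Throughout, "compact map" means that $f$ is continuous and $f(E)$ is relatively compact in $\mathcal{B}$.

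\textbf{Step 1 (compact target).} Set $K:=\overline{f(E)}$. By assumption $K$ is compact. Since $f(E)\subseteq E$ and $E$ is closed, we also have $K\subseteq E$.

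\textbf{Step 2 (Schauder projection).} Fix $\varepsilon>0$. By compactness, choose a finite $\varepsilon$-net $\{y_1,\dots,y_N\}\subseteq K$. For $y\in K$ put
$$\mu_i(y)=\max\{0,\varepsilon-\|y-y_i\|\},\qquad P_\varepsilon(y)=\frac{\sum_{i=1}^N\mu_i(y)\,y_i}{\sum_{i=1}^N\mu_i(y)}.$$
The denominator never vanishes on $K$, because some $y_i$ lies within distance $\varepsilon$ of $y$. Hence $P_\varepsilon$ is continuous on $K$. Its values lie in $E_\varepsilon:=\mathrm{conv}\{y_1,\dots,y_N\}$, which is contained in $E$ because $E$ is convex. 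Moreover $P_\varepsilon(y)$ is a convex combination of points $y_i$ with $\|y-y_i\|<\varepsilon$, so $\|P_\varepsilon(y)-y\|<\varepsilon$.

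\textbf{Step 3 (Brouwer on a finite-dimensional piece).} Consider $g_\varepsilon:=P_\varepsilon\circ f$ restricted to $E_\varepsilon$. It is continuous and maps $E_\varepsilon$ into itself, since $f(E_\varepsilon)\subseteq K$ and $P_\varepsilon(K)\subseteq E_\varepsilon$. The set $E_\varepsilon$ is a compact convex subset of the finite-dimensional space $\mathrm{span}\{y_i\}$. It is therefore homeomorphic to a closed Euclidean ball of its affine dimension; alternatively, it is a retract of a large ball containing it, via the nearest-point projection for a Euclidean structure on that span. Brouwer's theorem then gives $x_\varepsilon\in E_\varepsilon$ with $g_\varepsilon(x_\varepsilon)=x_\varepsilon$. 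By Step 2 this yields $\|f(x_\varepsilon)-x_\varepsilon\|<\varepsilon$.

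\textbf{Step 4 (passage to the limit).} Take $\varepsilon=1/n$ and write $x_n:=x_{1/n}$. Since $f(x_n)\in K$ and $K$ is compact, a subsequence $f(x_{n_k})$ converges to some $y\in K\subseteq E$. Then $x_{n_k}\to y$ as well, because $\|f(x_{n_k})-x_{n_k}\|<1/n_k$. Continuity of $f$ gives $f(y)=\lim_k f(x_{n_k})=y$.

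The only genuinely delicate point is Step 3, namely justifying Brouwer on $E_\varepsilon$ rather than on a ball; I would handle it by the retraction argument indicated there. Boundedness of $E$ is not actually needed once $f(E)$ is relatively compact.
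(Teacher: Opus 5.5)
Your argument is correct, and it is the standard proof of the Schauder fixed point theorem: you take a Schauder projection onto the convex hull of a finite $\varepsilon$-net of $\overline{f(E)}$, apply Brouwer on that finite-dimensional compact convex set, and pass to the limit by compactness. The paper gives no proof of its own and simply quotes the result from the literature, where it is usually proved in this way. The only tacit hypothesis you should state explicitly is $E\neq\emptyset$, which the lemma assumes implicitly and your construction of the $\varepsilon$-net requires.
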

\begin{Lem}\label{App1.1}{\rm{(Corollary 5.8 in \cite{BH10})}}
Let $H$ be a Hilbert space and  $V$ be a dense subspace of $H$. Let $B(u,v)$ be a bilinear form on $H\times V$, if it satisfies

\noindent{\rm(i)}there exists a constant $M\geq0$ such that $|B(u,v)|\leq M\|u\|_H \|v\|_V$, for all $u\in H, v\in V$,

\noindent{\rm(ii)}there exists a constant $\delta>0$ such that $B(v,v)\geq \delta\|v\|_H^2$, for all $v\in V$,\\
then for any linear bounded functional $F(v)$ on $H$, there exists a unique $u\in H$, such that $F(v)=B(u,v)$. for all $ v\in V$.
\end{Lem}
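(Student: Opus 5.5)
The plan is the classical Lions argument: convert $B$ into an operator from $V$ into $H$ via the Riesz representation theorem, and then apply Riesz a second time on the closure of its range. Throughout, $V$ carries its own norm $\|\cdot\|_V$, and $F$ is bounded on $H$.

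\emph{Step 1 (representing $B$).} Fix $v\in V$. By (i), the map $u\mapsto B(u,v)$ is a bounded linear functional on $H$ with norm at most $M\|v\|_V$. Riesz representation therefore gives a unique $Tv\in H$ with $B(u,v)=(u,Tv)_H$ for all $u\in H$. Bilinearity of $B$ makes $T:V\to H$ linear.

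\emph{Step 2 (lower bound for $T$).} Take $u=v\in V$. Coercivity (ii) and Cauchy--Schwarz give
\begin{equation*}
\delta\|v\|_H^2\leq B(v,v)=(v,Tv)_H\leq \|v\|_H\|Tv\|_H ,
\end{equation*}
so $\|Tv\|_H\geq\delta\|v\|_H$. In particular $T$ is injective.

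\emph{Step 3 (existence).} Define $\Lambda$ on the subspace $W:=T(V)\subset H$ by $\Lambda(Tv):=F(v)$. This is well defined because $T$ is injective. It is bounded, since
\begin{equation*}
|\Lambda(Tv)|\leq\|F\|_{H'}\|v\|_H\leq \delta^{-1}\|F\|_{H'}\|Tv\|_H .
\end{equation*}
Extend $\Lambda$ by continuity to $\overline W$, and by zero on $\overline W^{\perp}$, to obtain a bounded functional on $H$. Riesz representation then yields $u\in\overline W$ with $(u,Tv)_H=F(v)$ for all $v\in V$. This is exactly $B(u,v)=F(v)$ for all $v\in V$.

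\emph{Step 4 (uniqueness) and the main obstacle.} Suppose $u_1,u_2$ are two solutions and set $w=u_1-u_2$. Then $(w,Tv)_H=0$ for all $v\in V$, i.e.\ $w\perp T(V)$. Uniqueness is therefore equivalent to $T(V)$ being dense in $H$, and I expect this to be the delicate point. Coercivity cannot simply be applied to $w$, because $w$ need not lie in $V$. Hypotheses (i)--(ii) as stated do not seem to force this density: Lions' theorem in general gives existence only. The argument above does single out a canonical solution, namely the one lying in $\overline{T(V)}$, which is also the solution of minimal $H$-norm. So I would first check whether density of $T(V)$ follows in the intended setting. If it does not, I would state uniqueness only for this canonical solution, and then obtain genuine uniqueness in the application, namely the dual problem \eqref{DFPE}, from an additional energy-type argument specific to that PDE.
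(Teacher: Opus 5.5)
Your Steps 1--3 are a correct and complete proof of existence. This is Lions' argument, and it never uses the density of $V$. The paper gives no argument of its own: it only cites Corollary 5.8 of the reference, which is the classical Lax--Milgram theorem for a bounded coercive form on $H\times H$.

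Your suspicion in Step 4 is justified under your reading, where $\|\cdot\|_V$ may be strictly stronger than $\|\cdot\|_H$: there the uniqueness claim is false. Take $H=L^2(0,1)$, $V=C_c^\infty(0,1)$ with $\|v\|_V=\|v\|_{H^1(0,1)}$, and $B(u,v)=\int_0^1u\,(v-v')\,dx$. Then $|B(u,v)|\le\sqrt2\,\|u\|_H\|v\|_V$, and $B(v,v)=\|v\|_H^2$ because $\int_0^1vv'\,dx=0$. Integrating by parts gives $\int_0^1e^{-x}v'\,dx=\int_0^1e^{-x}v\,dx$ for $v\in V$, so $B(e^{-x},v)=0$ for all $v\in V$. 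Hence $u+ce^{-x}$ is a solution whenever $u$ is. In this setting only your canonical minimal-norm solution is singled out, and deferring uniqueness to a PDE-specific argument is the right call.

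This matters for how the paper uses the lemma. The form used for \eqref{adj} contains $Xm\,X^*\varphi_t$, hence $X\partial_t\varphi$, which $\|\varphi\|_{H^{1,1}_{\mathcal X}}$ does not control. So that application really lives in the stronger-norm setting.

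If instead $\|v\|_V\le C\|v\|_H$ on $V$ (in particular if $V$ simply carries the norm of $H$), uniqueness does hold, and your Step 4 is missing a short argument. This is also the only situation the cited Lax--Milgram theorem covers. The argument runs as follows.

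By (i), $\|Tv\|_H\le MC\|v\|_H$, so $T$ extends to a bounded $\bar T:H\to H$ with $\bar T(H)\subset\overline{T(V)}$. Approximating $w\in H$ by $v_n\in V$ gives $(w,\bar Tw)_H=\lim_n(v_n,Tv_n)_H\ge\delta\|w\|_H^2$. If $w\perp T(V)$, then $w\perp\bar T(H)$, so $\delta\|w\|_H^2\le(w,\bar Tw)_H=0$.

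This extension by continuity removes your objection that coercivity cannot be applied to $w\notin V$, and it is the one place where density of $V$ enters. So the lemma is true in this case, where it is just Lax--Milgram after extending $B$ to $H\times H$. With a genuinely stronger $V$-norm, only its existence part is valid in general. You should say explicitly which version you are proving.
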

\section{REGULARITY THEORY FOR DEGENERATE PARABOLIC EQUATIONS}
In this section, we shall show  a priori global Schauder estimates given in Proposition \ref{prop13}. Then we obtain the well-posedness and
regularity for linear degenerate parabolic equations \eqref{peq} given in Theorem \ref{eu}. Because the solution $u$ to the equation \eqref{peq2} is periodic, which is provided by Lemma \ref{periodic}, hence the proof of  Proposition \ref{prop13} is guaranteed  by the local estimates on $[0,T]\times\mathbb{R}^d$.

 To begin with, we first focus on the corresponding equation \eqref{peq}  defined on $[0,T]\times\mathbb{R}^d$ with all coefficients periodic as follows:
\begin{equation}\label{peq2}
\left\{
  \begin{aligned}
     & \mathcal {H}u(t,x)=f(t,x),&(t,x)\in (0,T]\times\mathbb{R}^d,  \\
   & u(0,x)=g(x)& x\in\mathbb{R}^d,
  \end{aligned}
\right.
\end{equation}
and show the estimates near the bottom boundary and the internal Schauder estimate. 

\subsection{ Proof of Proposition \ref{prop13}}
In the sequel, we shall use Einstein summation convention. Recall that $\Omega$ is a bounded subset of $\mathbb{R}^d$,
for any $t_0\in[0,T]$, $x_0\in\mathbb{R}^d$, $R>0$, set $$Q_{R}(t_0,x_0)=(t_0-R^2,t_0+R^2)\times B_{R}(x_0)\subset [0,T]\times\Omega,$$ $$Q^0_{R}(x_0)=(0,R^2)\times B_{R}(x_0)\subset[0,T]\times\Omega,$$
\begin{equation}\label{bound}
\partial_p Q_R(t_0,x_0)=\big\{\{t=t_0-R^2\}\times B_R(x_0)\big\}\cup\big\{(0,T)\times\partial B_R(x_0)\big\}.
\end{equation}
\begin{equation*}
\partial_p Q^0_R(x_0)=\big\{\{t=0\}\times B_R(x_0)\big\}\cup\big\{(0,T)\times\partial B_R(x_0)\big\}.
\end{equation*}
For convenience, we shall write $Q_R$ instead of $Q_{R}(t_0,x_0)$, $Q^0_R$ instead of $Q^0_{R}(x_0)$ and $B_R$ instead of $B_{R}(x_0)$. For the multi-index set $I$ and $i_k\in\{1,\ldots,q\}$, $k\in\mathbb{N}$, we denote
$$|X^ku|^2=\sum_{i_s\in I}|X_{i_1}X_{i_2}\ldots X_{i_k}u|^2.$$

To derive a priori estimates for linear equation \eqref{peq2},  we will simplify it from following three aspects. First, we can omit the lower terms of the equation \eqref{peq2}, which can be solved by the interpolation inequality.
Second, we assume that $g\equiv0$. Otherwise,  set $\tilde{u}=u-g$. Hence our discussion starts from the following linear equation
\begin{equation}\label{linear2}
\left\{
  \begin{aligned}
    &\partial_t u(t,x)-\Delta_{\mathcal {X}}u(t,x)=f(t,x),&(t,x)\in Q_R, \\
    &u(0,x)=0,& x\in B_R.
    \end{aligned}
\right.
\end{equation}
Third, we omit some proofs for the estimate near the bottom boundary that are similar to the interior estimate.

Now, we give  a parabolic version of the Poincar\'{e}-type inequality as follows.
\begin{Prop}\label{poin9}
For any $u\in C^{\infty}(Q_R)$ and $0<R\leq R_0$, $R_0$ is a positive constant, there exists a positive constant $C$ such that
\begin{align}\label{poincare91}
\iint_{Q_{R}}|u-{(u)}_{Q_R}|^2dxdt\leq CR^2\iint_{Q_R}|Xu|^2dxdt+CR^4\iint_{Q_{R}}|\partial_tu|^2dxdt,
\end{align}
where ${(u)}_{Q_{R}}=\frac{1}{|Q_{R}|}\iint_{Q_{R}}u(t,x)dxdt$ is the average of $u$ on $Q_{R}$.
\end{Prop}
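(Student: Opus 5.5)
The plan is to split the deviation from the space–time average into a spatial part and a temporal part, and handle them separately. Write $Q_R=I_R\times B_R$ with $I_R=(t_0-R^2,t_0+R^2)$. For each $t\in I_R$ let $(u)_{B_R}(t)=\frac{1}{|B_R|}\int_{B_R}u(t,x)\,dx$ be the spatial average at time $t$. Since ${(u)}_{Q_R}$ minimizes $c\mapsto\iint_{Q_R}|u-c|^2$, I will use the elementary inequality
\begin{align*}
\iint_{Q_R}|u-{(u)}_{Q_R}|^2dxdt&\leq 2\iint_{Q_R}|u-(u)_{B_R}(t)|^2dxdt\\
&\quad+2\iint_{Q_R}|(u)_{B_R}(t)-{(u)}_{Q_R}|^2dxdt.
\end{align*}
This splits the task into a purely spatial Poincaré estimate on each time slice and a one-dimensional Poincaré estimate in time.

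For the first term, fix $t\in I_R$ and apply Proposition \ref{poin} to $u(t,\cdot)$ on $B_R$, which is legitimate because $0<R\leq R_0$. This gives $\int_{B_R}|u(t,\cdot)-(u)_{B_R}(t)|^2dx\leq CR^2\int_{B_R}|Xu(t,\cdot)|^2dx$. Integrating over $t\in I_R$ yields the term $CR^2\iint_{Q_R}|Xu|^2$.

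For the second term, set $h(t)=(u)_{B_R}(t)$. Because ${(u)}_{Q_R}$ is exactly the average of $h$ over $I_R$, the second term equals $|B_R|\int_{I_R}|h-\bar h|^2dt$. The classical one-dimensional Poincaré inequality on an interval of length $2R^2$ gives
\[
\int_{I_R}|h-\bar h|^2dt\leq CR^4\int_{I_R}|h'(t)|^2dt .
\]
Since $u$ is smooth, we may differentiate under the integral, so $h'(t)=\frac{1}{|B_R|}\int_{B_R}\partial_tu\,dx$. Jensen/Cauchy–Schwarz then gives $|h'(t)|^2\leq\frac{1}{|B_R|}\int_{B_R}|\partial_tu|^2dx$. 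Multiplying by $|B_R|$ and integrating in $t$ produces $CR^4\iint_{Q_R}|\partial_tu|^2$. The factor $|B_R|$ cancels exactly, so the volume bound \eqref{2.8} is not even needed here. Adding the two bounds gives \eqref{poincare91}.

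The only step needing care is the spatial one. The ball $B_R$ must be small enough, and lie in the compact region where Proposition \ref{poin} holds with a constant uniform in the center; this is what restricts us to $R\leq R_0$. Apart from that, the argument is routine. The temporal part is entirely classical because the parabolic cylinder has product structure, and the time direction is Euclidean of scale $R^2$.
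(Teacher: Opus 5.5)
Your proposal is correct and follows essentially the same route as the paper. You both split the deviation through the slice averages $(u)_{B_R}(t)$, apply the spatial Poincar\'{e} inequality \eqref{p11} on each time slice, and control the one-dimensional oscillation of $t\mapsto (u)_{B_R}(t)$. The only difference is cosmetic: you quote the classical 1D Poincar\'{e}--Wirtinger inequality plus Jensen, whereas the paper derives that step by hand via $|(u)_{B_R}(t)-(u)_{B_R}(s)|^2\leq \frac{C|t-s|}{|B_R|}\int_s^t\int_{B_R}|\partial_\tau u|^2dxd\tau$ and then averages in $s$.
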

\begin{proof}
Let ${(u)}_{B_R}(t)=\frac{1}{|B_R|}\int_{B_R}u(t,x)dx$, then we have
\begin{equation}\label{ur}
  {(u)}_{Q_{R}}=\frac{1}{|Q_{R}|}\iint_{Q_{R}} {(u)}_{B_R}(s)dxds.
\end{equation}
It follows from the Cauchy inequality and the Poincar\'{e} inequality \eqref{p11} that
\begin{align*}
\iint_{Q_{R}}|u- {(u)}_{Q_{R}}|^2dxdt&\leq 2\iint_{Q_{R}}|u- {(u)}_{B_R}(t)|^2dxdt
+2\iint_{Q_{R}}| {(u)}_{B_R}(t)- {(u)}_{Q_{R}}|^2dxdt\\
&\leq CR^2\iint_{Q_{R}}|Xu|^2dxdt+C\iint_{Q_{R}}| {(u)}_{B_R}(t)- {(u)}_{Q_{R}}|^2dxdt.
\end{align*}
For $t_0-R^2\leq s\leq t\leq t_0+R^2$, the H\"{o}lder inequality gives that
\begin{align*}
| {(u)}_{B_R}(t)- {(u)}_{B_R}(s)|^2&=\frac{1}{|B_R|^2}\Big(\int_{B_R}\int^t_s\partial_\tau u(\tau,x)d\tau dx\Big)^2\\
&\leq \frac{C|t-s|}{|B_R|}\int^t_s\int_{B_R}|\partial_\tau u(\tau,x)|^2dxd\tau\\
&\leq \frac{CR^2}{|B_R|}\iint_{Q_{R}}|\partial_\tau u(\tau,x)|^2dxd\tau.
\end{align*}
By using  \eqref{ur} and the H\"{o}lder inequality, then the estimate \eqref{poincare91} is valid.
\end{proof}

Next, we show the Caccioppoli-type inequality on intrinsic Sobolev spaces as follows.
\begin{Prop}\label{Ca31}
Let $u$ be a  solution of the equation \eqref{linear2}. Then for all $(t_0,x_0)\in [0,T]\times\mathbb{R}^d$, all $0<\rho\leq R<1$ such that $Q_{R}(t_0,x_0)\subset [0,T]\times\Omega$, we have
\begin{equation}\label{Ca20}
  \iint_{Q_{R/2}}|X^2u|^2dxdt\leq \frac{C}{R^4}\iint_{Q_{R}}u^2dxdt+C\iint_{Q_{R}}f^2dxdt.
\end{equation}
Furthermore, if $f\equiv0$, then for all $k\in\mathbb{N}$, we have $u\in C^\infty([0,T]\times\mathbb{R}^d)$ and
\begin{equation}\label{Caf0}
 \iint_{Q_{R/2^k}}|X^ku|^2dxdt\leq \frac{C}{R^{2k}}\iint_{Q_{R}}u^2dxdt.
\end{equation}
\end{Prop}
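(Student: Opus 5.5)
The plan is to prove \eqref{Ca20} by testing the equation against cutoffs in two steps: a first-order energy estimate for $Xu$, then a second-order estimate obtained by differentiating the equation along the fields and controlling the commutators that appear. Fix cutoffs $\eta\in C_0^\infty(Q_R)$ with $\eta\equiv1$ on $Q_{R/2}$, $|X\eta|\le C/R$, $|X^2\eta|\le C/R^2$ and $|\partial_t\eta|\le C/R^2$. Such cutoffs can be built from the CC-distance, using the smooth-bump constructions available for H\"ormander balls via \eqref{2.7}, \eqref{2.8}.

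\emph{First-order estimate.} Multiply \eqref{linear2} by $\eta^2u$ and integrate by parts along $X_k$. This produces the adjoint correction $c_k$ from \eqref{ck}, which is smooth and bounded. One gets
\[
\sup_t\int\eta^2u^2+\iint\eta^2|Xu|^2\le \frac{C}{R^2}\iint_{Q_R}u^2+CR^2\iint_{Q_R}f^2 ,
\]
where the term $\iint\eta^2 f u$ is absorbed by Young's inequality with weight $R^2$.

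\emph{Second-order estimate.} Multiply by $-X_j^*(\eta^2 X_ju)$, i.e.\ formally test with $\eta^2\Delta_{\mathcal X}u$ plus lower-order terms. This step produces $\iint\eta^2|X^2u|^2$, but commuting $X_j$ past $\Delta_{\mathcal X}$ generates terms of the form $\iint \eta^2 X_j u\,[X_j,X_k]X_ku$. The commutator $[X_j,X_k]$ is a vector field of weight $2$ and is not controlled by $|X^2u|$ pointwise. This is the main obstacle.

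To deal with it, I would write $[X_j,X_k]X_ku=X_jX_kX_ku-X_kX_jX_ku$ and integrate one $X$ by parts onto $\eta^2X_ju$. This bounds the term by
\[
\varepsilon\iint\eta^2|X^2u|^2+C\iint(\eta^2+|X\eta|^2)|Xu|^2 ,
\]
plus lower-order terms coming from the smooth $c_k$. The $\varepsilon$-term is absorbed on the left. The time derivative contributes $\frac12\frac{d}{dt}\int\eta^2|Xu|^2$ plus $\iint|\partial_t\eta|\eta|Xu|^2$. The forcing term gives $\iint\eta^2|f||X^2u|\le\varepsilon\iint\eta^2|X^2u|^2+C\iint f^2$. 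Combining with the first step on a slightly larger cylinder (a chain of nested cutoffs between $Q_{R/2}$ and $Q_R$) yields
\[
\iint_{Q_{R/2}}|X^2u|^2\le \frac{C}{R^2}\iint|Xu|^2+C\iint f^2\le \frac{C}{R^4}\iint_{Q_R}u^2+C\iint_{Q_R} f^2 ,
\]
using $R<1$. Near the bottom boundary the same computation applies on $Q^0_R$: there $u(0,\cdot)=0$ and the cutoff is taken in space only, so boundary terms at $t=0$ vanish or have a good sign. Since $u\in C^{2,\alpha}_{\mathcal X}$, the integrations by parts are justified, if necessary after mollification in $t$.

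\emph{The case $f\equiv0$.} Smoothness of $u$ follows from H\"ormander's hypoellipticity theorem for $\partial_t-\Delta_{\mathcal X}$ recalled in the introduction. For \eqref{Caf0} I would induct on $k$. The function $v=X_ju$ satisfies $(\partial_t-\Delta_{\mathcal X})v=[\Delta_{\mathcal X},X_j]u$, whose right side is a sum of terms $X_k[X_k,X_j]u$ and $[X_k,X_j]X_ku$. Rather than treating these as a source term, it is better to redo the energy argument directly for $X^{I}u$ with $|I|=k$. Each commutator is expanded as differences of products of the $X_i$, so that its order exceeds the order of the terms being estimated by at most one. It can then be split by one integration by parts and absorbed by $\varepsilon$-Young, exactly as above. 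Each step costs a factor $C/R^2$ and halves the radius, giving $\iint_{Q_{R/2^k}}|X^ku|^2\le CR^{-2k}\iint_{Q_R}u^2$. The bookkeeping of the commutators is the delicate point; the freeness assumption (Lemma \ref{xue2lem22}) only enters to make sure the constants depend on $\mathcal X$ and $k$ alone.
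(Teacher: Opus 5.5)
Your first-order step is correct and coincides with the paper's \eqref{Xuu}. The gap is in the second-order step: the commutator terms cannot be absorbed.

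Carry out the integration by parts you propose. Modulo terms containing $X\eta$ or the $c_k$,
\[
\sum_{j,k}\iint\eta^2X_ju\,[X_j,X_k]X_ku=\iint\eta^2\Big(\sum_{j,k}X_kX_ju\,X_jX_ku-(\Delta_{\mathcal X}u)^2\Big).
\]
This is a quadratic form in second derivatives with coefficients of size one. There is no $\varepsilon$, and no first-order factor is left to carry the weight. So the claimed bound $\varepsilon\iint\eta^2|X^2u|^2+C\iint(\eta^2+|X\eta|^2)|Xu|^2$ is false.

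The other half of $[X_j,\Delta_{\mathcal X}]=\sum_k\big(X_k[X_j,X_k]+[X_j,X_k]X_k\big)$ contributes $-\frac12\iint\eta^2\sum_{j,k}|[X_j,X_k]u|^2$ to the left-hand side, which has the wrong sign. This is no accident. Your test function $-X_j^*(\eta^2X_ju)$ equals $\eta^2\Delta_{\mathcal X}u$ up to first-order terms. Hence all commutator contributions add up to $\iint\eta^2(|\Delta_{\mathcal X}u|^2-|X^2u|^2)$. The identity therefore controls only $\|\eta\,\Delta_{\mathcal X}u\|_{L^2}$ (and hence $\partial_tu$), never the individual $X_iX_ju$.

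Passing from $\Delta_{\mathcal X}u$ to all of $X^2u$ is exactly the $L^2$ maximal hypoelliptic estimate
\[
\|X_iX_jv\|_{L^2}\le C\big(\|(\partial_t-\Delta_{\mathcal X})v\|_{L^2}+\|v\|_{L^2}\big)\quad\text{for compactly supported }v.
\]
This is a deep theorem of Rothschild and Stein. Its proof lifts to free vector fields, approximates by left-invariant operators on free nilpotent groups, and uses singular integrals. It cannot be obtained from energy identities of the kind you use. This is also where freeness genuinely enters, not merely in the dependence of the constants.

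The paper uses exactly this tool. After \eqref{Xuu} it writes $\mathcal H_0(\xi\eta u)=\xi'\eta u+\xi\eta f-\sum_i(\xi uX_i^2\eta+2\xi X_i\eta X_iu)$ and bounds the $L^2$ norm of this by \eqref{Xuu}. It then applies Theorem 18 of \cite{RS77} to $\xi\eta u$.

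For \eqref{Caf0} it applies the same theorem in the form $\|v\|_{H^k_{\mathcal X}}\le C(\|\mathcal H_0v\|_{H^{k-2}_{\mathcal X}}+\|v\|_{L^2})$ to $v=\xi\eta u$. The induction hypothesis bounds $\mathcal H_0(\xi\eta u)=\xi'\eta u-\xi u\Delta_{\mathcal X}\eta-2\xi X\eta\cdot Xu$ in $H^{k-2}_{\mathcal X}$.

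Your higher-order induction has the same defect as your second-order step. The commutators there have the same weighted order as the quantity being estimated, so the $\varepsilon$-Young absorption is again unavailable. To repair the proposal, replace the commutator bookkeeping at both stages with an appeal to this maximal regularity estimate.
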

\begin{proof}
First, we prove \eqref{Ca20}. By the H\"{o}rmander theorem, the function $u$ is smooth, so $u\in H_{\mathcal {X}}^k(Q_R)$.
Let $\eta(x)\in C^\infty_0(B_R)$ be a cutoff function satisfying
  \begin{equation}\label{702}
  \eta\equiv1\ \text{on}\ B_{\rho},\ 0\leq\eta\leq1\ \text{on}\ B_R,\ \text{and}\ |X\eta|\leq \frac{C}{R-\rho}.
\end{equation}
Let $\xi(t)\in C^\infty(\mathbb{R})$ be a cutoff function satisfying
$$\xi\equiv1\ \ \text{on}\ t>t_0-\rho^2,\ \xi\equiv0\ \ \text{on}\ t<t_0-R^2,$$
\begin{equation}\label{cutofft}
   0\leq\xi\leq1\ \text{and}\  |\xi'(t)|\leq \frac{C}{(R-\rho)^2}\ \ \text{on}\ t_0-R^2\leq t\leq t_0-\rho^2.
\end{equation}
Multiply both sides of the equation \eqref{linear2} by $\xi^2\eta^2u$ and integrate on $Q_{R}$. \
Since $X^*_i=-X_i+c_i$, we get
\begin{align}\label{inte1}
\begin{split}
 &\iint_{Q_{R}}\xi^2\eta^2u\partial_t udxdt\\
=&\iint_{Q_{R}}X^*_i(\xi^2\eta^2u)X_i udxdt+\iint_{Q_{R}}\xi^2\eta^2ufdxdt\\\
=&\iint_{Q_{R}}\xi^2\big(c_i\eta^2u-\eta^2X_iu-2\eta X_i\eta u\big)X_iu dxdt+\iint_{Q_{R}}\xi^2\eta^2ufdxdt.
  \end{split}
\end{align}
For the left hand of \eqref{inte1}, using integration by parts, we have
 \begin{align}\label{left1}
 \begin{split}
 \iint_{Q_{R}}\xi^2\eta^2u\partial_t udxdt&=\frac{1}{2}\iint_{Q_{R}}\partial_t\big(\xi \eta u\big)^2 dxdt
 -\iint_{Q_{R}}\xi\xi'\eta^2u^2dxdt\\
 &=\frac{1}{2}\int_{B_{R}}\big(\xi \eta u\big)^2\big|_{t=t_0+R^2}dx
 -\iint_{Q_{R}}\xi\xi'\eta^2u^2dxdt\\
 &\geq  -\iint_{Q_{R}}\xi\xi'\eta^2u^2dxdt.
 \end{split}
 \end{align}
It follows from \eqref{inte1}, \eqref{left1} and $\varepsilon$-Cauchy inequality that
\begin{align*}
\begin{split}
 &\quad\iint_{Q_{R}}\xi^2\eta^2|Xu|^2dxdt\\
&\leq \iint_{Q_{R}}\xi\xi'\eta^2u^2dxdt+C\iint_{Q_{R}}\xi^2\eta^2|u||Xu|dxdt\\
&\quad+C\iint_{Q_{R}}\xi^2\eta|X\eta||u||Xu|dxdt+C\iint_{Q_{R}}\xi^2\eta^2|u||f|dxdt\\
&\leq \frac{C}{(R-\rho)^2}\iint_{Q_{R}}\xi\eta^2u^2dxdt+\frac{C}{R-\rho}\iint_{Q_{R}}\xi^2\eta|u||Xu|dxdt
+C\iint_{Q_{R}}\xi^2\eta^2|u||f|dxdt\\
&\leq  \frac{C}{(R-\rho)^2}\iint_{Q_{R}}\xi\eta^2u^2dxdt+ \frac{C_\varepsilon}{(R-\rho)^2}\iint_{Q_{R}}\xi^2\eta u^2dxdt+\varepsilon \iint_{Q_{R}}\xi^2\eta|Xu|^2dxdt\\
&\quad+ \frac{C}{(R-\rho)^2}\iint_{Q_{R}}\xi^2\eta^2u^2dxdt+(R-\rho)^2\iint_{Q_{R}}\xi^2\eta^2f^2dxdt.
\end{split}
\end{align*}
Taking $\varepsilon$ small enough, then we have
\begin{align}\label{Xuu}
\begin{split}
 \iint_{Q_{\rho}}|Xu|^2dxdt
 &\leq\iint_{Q_{R}}\xi^2\eta^2|Xu|^2dxdt\\
&\leq \frac{C}{(R-\rho)^2}\iint_{Q_{R}}u^2dxdt
+C(R-\rho)^2\iint_{Q_{R}}f^2dxdt.
\end{split}
\end{align}
Set the operator $$\mathcal {H}_0:=\partial_t-\Delta_\mathcal {X}.$$ For the equation \eqref{linear2}, since $\mathcal {H}_0u=f$, we have
$$\mathcal {H}_0(\xi\eta u)=\xi'\eta u+\xi\eta f-\sum^q_{i=1}\big(\xi u X_i^2\eta +2\xi X_i\eta X_iu\big).$$
Then we have
\begin{align*}
\begin{split}
&\quad \iint_{Q_{R}}|\mathcal {H}_0(\xi\eta u)|^2dxdt\\
&\leq \frac{C}{(R-\rho)^4}\iint_{Q_{R}}u^2dxdt+\frac{C}{(R-\rho)^2}\iint_{Q_{R}}|Xu|^2dxdt
+C\iint_{Q_{R}}f^2dxdt.
\end{split}
\end{align*}
Using the maximum regularity for $\mathcal {H}_0$ proved by Theorem 18 of \cite{RS77}, we get
\begin{align*}
\|X^2u\|_{L^2(Q_{\rho})}&\leq\|\xi\eta u\|_{H^{2}_{\mathcal {X}}(Q_{R})}\\
&\leq C\|\mathcal {H}_0(\xi\eta u)\|_{L^2(Q_{{R}})}+C\|\xi\eta u\|_{L^2(Q_R )}\\
&\leq \frac{C}{(R-\rho)^4}\iint_{Q_{R}}u^2dxdt+\frac{C}{(R-\rho)^2}\iint_{Q_{R}}|Xu|^2dxdt+C\iint_{Q_{R}}f^2dxdt.
\end{align*}
Taking $\rho=\frac{R}{2},\ R=\frac{3R}{4}$ in the above estimate, taking $\rho=\frac{3R}{4}$ in the estimate \eqref{Xuu}, then we have
\begin{align*}
&\quad\iint_{Q_{R/2}}|X^2u|^2dxdt\\
&\leq\frac{C}{R^4}\iint_{Q_{3R/4}}u^2dxdt+\frac{C}{R^2}\iint_{Q_{3R/4}}|Xu|^2dxdt+C\iint_{Q_{3R/4}}f^2dxdt\\
&\leq\frac{C}{R^4}\iint_{Q_{R}}u^2dxdt+C\iint_{Q_{R}}f^2dxdt,
\end{align*}
which implies that the estimate \eqref{Ca20}  is valid.

In the following, let $\rho=\frac{R}{2}$ and $f\equiv0$, we prove \eqref{Caf0} by induction on $k$, which is inspired of Proposition 3.1 in \cite{{XZ97}}.
First, we note that the estimate \eqref{Xuu} corresponds to
\begin{equation}\label{xue29822}
  \iint_{Q_{R/2}}|Xu|^2dxdt\leq\frac{C}{R^2}\iint_{Q_{R/2}}u^2dxdt,
\end{equation}
which implies that \eqref{Ca20} is valid for $k=1$.

Assume now \eqref{Caf0} true up to the order $k-1$, that is for any $ i\in\{2,\ldots,k-1\}$,
\begin{equation}\label{Xiuu}
\iint_{Q_{R/2^i}}|X^iu|^2dxdt\leq \frac{C}{R^{2i}}\iint_{Q_{R}}u^2dxdt.
\end{equation}
Let $\eta(x)\in C^{\infty}_{0}(B_{{R}/{2^{k-1}}})$ be a cutoff function satisfying
\begin{equation*}
 \eta\equiv1\ \text{on}\ B_{{{R}}/{2^k}},\ 0\leq\eta\leq1\ \text{on}\ B_{R}\ \text{and}\ |X^{I}\eta|\leq CR^{-|I|}.
\end{equation*}
Let $\xi(t)\in C^\infty(\mathbb{R})$ be a cutoff function satisfying
$$\xi\equiv1\ \ \text{on}\ t>t_0-\frac{R^2}{4^k},\ \xi\equiv0\ \ \text{on}\ t<t_0-R^2,$$
\begin{equation*}
   0\leq\xi\leq1\ \text{and}\  |\partial_t^i\xi|\leq CR^{-2i}\ \text{on}\ t_0-R^2\leq t\leq t_0-\frac{R^2}{4^k}.
\end{equation*}
Since $\mathcal {H}_0u=0$, then $\mathcal {H}_0(\xi\eta u)=\xi'\eta u-\xi\Delta_\mathcal {X}\eta u-2\xi X\eta Xu$ and \eqref{Xiuu} gives that
\begin{align*}
  \|\mathcal {H}_0(\xi\eta u)\|_{H^{k-2}_{\mathcal {X}}(Q_{{R}/{2^{k-1}}})}
  &\leq C\sum_{|I|\leq k-1}R^{-k+|I|}\|X^Iu\|_{L^2(Q_{{R}/{2^{k-1}}})}\leq \frac{C}{R^{k}}\iint_{Q_{R}}u^2dxdt.
\end{align*}
Using the maximum regularity for $\mathcal {H}_0$ (Theorem 18 in \cite{RS77}) again, we have
\begin{align*}
\sum_{|I|=k}\|X^Iu\|_{L^2(Q_{{R}/{2^{k}}})}
&\leq C\|\mathcal {H}_0(\xi\eta u)\|_{H^{k-2}_{\mathcal {X}}(Q_{{R}/{2^{k-1}}})}+C\|\xi\eta u\|_{L^2(Q_{{R}/{2^{k-1}}})}\\
&\leq \frac{C}{R^{k}}\iint_{Q_{R}}u^2dxdt.
\end{align*}
Hence the estimate \eqref{Caf0} is valid for $k$, and the result follows.
\end{proof}
The corresponding estimate  near the bottom boundary is obtained as follows.
\begin{Prop}\label{0Ca20}
Let $u$ be a  solution of the equation \eqref{linear2}. Then for all $(t_0,x_0)\in [0,T]\times\mathbb{R}^d$, all $0<R<1$ such that $Q^0_R(0,x_0)\subset [0,T]\times\Omega$, we have
\begin{equation}
  \iint_{Q^0_{R/2}}|X^2u|^2dxdt\leq \frac{C}{R^4}\iint_{Q^0_R}u^2dxdt+C\iint_{Q^0_R}f^2dxdt.
\end{equation}
Furthermore, if $f\equiv0$, then for all $k\in\mathbb{N}$, we have $u\in C^\infty([0,T]\times\mathbb{R}^d)$ and
\begin{equation}\label{0Caf0}
 \iint_{Q^0_{R/2^k}}|X^ku|^2dxdt\leq \frac{C}{R^{2k}}\iint_{Q^0_{R}}u^2dxdt.
\end{equation}
\end{Prop}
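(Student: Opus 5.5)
The plan is to repeat the proof of Proposition \ref{Ca31}, adapting the cutoffs to the bottom cylinder $Q^0_R=(0,R^2)\times B_R(x_0)$. The initial datum $u(0,\cdot)=0$ from \eqref{linear2} will take the place of the time cutoff. We only localize in space, with $\eta\in C_0^\infty(B_R)$ as in \eqref{702}, and take $\xi\equiv1$. This is possible because every time-boundary term produced by integrating by parts on $\{t=0\}$ vanishes, since $u(0,x)=0$.

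First I would derive the energy estimate. Multiply \eqref{linear2} by $\eta^2u$ and integrate over $(0,\tau)\times B_R$ for $\tau\le R^2$. Using $X_i^*=-X_i+c_i$ as in \eqref{inte1}, the left-hand side becomes $\frac12\int_{B_R}(\eta u)^2|_{t=\tau}\,dx\ge0$, and no $\xi'$ term appears. The $\varepsilon$-Cauchy inequality then gives the analogue of \eqref{Xuu},
$$\iint_{Q^0_\rho}|Xu|^2\,dxdt\le \frac{C}{(R-\rho)^2}\iint_{Q^0_R}u^2\,dxdt+C(R-\rho)^2\iint_{Q^0_R}f^2\,dxdt .$$
Next comes the second-order estimate. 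We have $\mathcal H_0(\eta u)=\eta f-uX_i^2\eta-2X_i\eta X_iu$ and $\eta u|_{t=0}=0$. So $\eta u$ solves a Cauchy problem with zero initial data, and the $L^2$ maximal regularity of $\mathcal H_0$ from Theorem 18 of \cite{RS77} can be applied on $(0,R^2)\times B_R$. Concretely, extend $\eta u$ and its right-hand side by zero to $t<0$; the extension still solves $\mathcal H_0(\eta u)=\tilde f$ in the distributional sense on $(-R^2,R^2)\times B_R$ because of the zero trace. This yields
$$\|X^2u\|_{L^2(Q^0_\rho)}^2\le \frac{C}{(R-\rho)^4}\iint_{Q^0_R}u^2+\frac{C}{(R-\rho)^2}\iint_{Q^0_R}|Xu|^2+C\iint_{Q^0_R}f^2 .$$
Choosing the radii $R/2<3R/4<R$ exactly as in the interior proof gives the first estimate.

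For \eqref{0Caf0} with $f\equiv0$, I would argue by induction on $k$, as in Proposition \ref{Ca31}. Take spatial cutoffs $\eta$ supported in $B_{R/2^{k-1}}$ with $|X^I\eta|\le CR^{-|I|}$. Then $\mathcal H_0(\eta u)=-u\Delta_{\mathcal X}\eta-2X\eta Xu$ vanishes at $t=0$. The higher-order maximal regularity, again applied to the zero extension to negative times, together with the inductive hypothesis bounds the $H^{k-2}_{\mathcal X}$ norm of the right-hand side by $CR^{-k}\|u\|_{L^2(Q^0_R)}$.

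Smoothness up to $t=0$ follows from hypoellipticity. The zero extension $\tilde u$ satisfies $\mathcal H_0\tilde u=0$ across $t=0$, since $u$ and (at the $k$-th stage) the relevant traces vanish, and $\partial_t-\Delta_{\mathcal X}$ is hypoelliptic by H\"ormander's theorem. Therefore $\tilde u\in C^\infty$.

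The main obstacle is justifying this zero extension. For the first-order and $X^2$ estimates, the zero trace of $u$ suffices. For higher $k$ one also needs the compatibility conditions $\partial_t^ju(0,\cdot)=\Delta_{\mathcal X}^ju(0,\cdot)=0$, and these follow inductively from the equation with $f\equiv0$ and $u(0)=0$. I would verify these first and then run the induction.
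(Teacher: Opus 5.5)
This is essentially the paper's argument: the paper's proof only says to repeat Proposition \ref{Ca31}, multiplying \eqref{linear2} by $\eta^2u$ alone so that $u(0,x)=0$ replaces the time cutoff $\xi$, and your zero extension to $t<0$ is a sound way to make the maximal-regularity step and the smoothness up to $t=0$ explicit. Two small remarks: for $f\equiv0$ the zero trace alone already makes $\tilde u$ a distributional solution of $\mathcal H_0\tilde u=0$ across $t=0$, so hypoellipticity gives smoothness and the higher compatibility conditions follow from it rather than needing a separate check; and since $\xi\equiv1$ leaves $\eta u$ without compact support near the top of the cylinder, you should also multiply by a time cutoff $\zeta$ that equals $1$ on the time interval of the smaller cylinder, with $|\partial_t^j\zeta|\le CR^{-2j}$, before invoking Theorem 18 of \cite{RS77}, which only adds terms already on the right-hand side.
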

\begin{proof}
The proof is similar to Proposition \ref{Ca31}. The difference is that, it is only to multiply the equation \eqref{linear2} by $\eta^2u$  and integrate on $Q^0_R$, since $u(0,x)=0$. Here we omit it.
\end{proof}
\begin{Prop}\label{102}
Let $u$ be a  solution of the equation \eqref{linear2} with $f\equiv0$. Then for all $(t_0,x_0)\in [0,T]\times\mathbb{R}^d$, all $0<R<1$ such that $Q_{R}(t_0,x_0)\subset [0,T]\times\Omega$, we have
\begin{align}\label{108}
\iint_{Q_{R/2}}|\partial_tu|^2dxdt&\leq\frac{C}{R^2}\iint_{Q_{R}}|Xu|^2dxdt.
\end{align}
\end{Prop}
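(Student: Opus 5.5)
Since $f\equiv0$, the equation reads $\partial_t u=\Delta_{\mathcal X}u$. The plan is an energy estimate for $\partial_t u$, obtained by testing the equation with a cut-off multiple of $\partial_t u$ itself. Since $\partial_t u = \Delta_{\mathcal X}u$, this is equivalent to controlling $\partial_t u$ through $\Delta_{\mathcal X} u$, but done at the first-order level so that only $|Xu|^2$ appears on the right. Proposition \ref{Ca31} guarantees that $u$ is smooth, so all integrations by parts below are legitimate.

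\textbf{Cut-offs and testing.} I take $\eta(x)\in C^\infty_0(B_R)$ with $\eta\equiv1$ on $B_{R/2}$ and $|X\eta|\le C/R$. I take $\xi(t)$ with $\xi\equiv1$ for $t>t_0-R^2/4$, $\xi\equiv0$ for $t<t_0-R^2$, and $|\xi'|\le C/R^2$. I multiply $\partial_t u-\Delta_{\mathcal X}u=0$ by $\xi^2\eta^2\partial_t u$ and integrate over $Q_R$. Using $X_i^*=-X_i+c_i$ as in \eqref{inte1}, this gives
\begin{align*}
\iint_{Q_R}\xi^2\eta^2|\partial_t u|^2
&=\iint_{Q_R}X_i^*(\xi^2\eta^2\partial_t u)\,X_iu\\
&=-\iint_{Q_R}\xi^2\eta^2 X_i\partial_t u\,X_iu+\iint_{Q_R}\xi^2\big(c_i\eta^2-2\eta X_i\eta\big)\partial_t u\,X_iu .
\end{align*}

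\textbf{The main term.} The vector fields depend only on $x$, so $X_i\partial_t u=\partial_t X_iu$. Hence the first term on the right equals $-\frac12\iint\xi^2\eta^2\partial_t|Xu|^2$. Integrating by parts in $t$, this becomes $-\frac12\int_{B_R}\xi^2\eta^2|Xu|^2\big|_{t=t_0+R^2}+\iint\xi\xi'\eta^2|Xu|^2$. The boundary term is nonpositive, so it can be dropped, and the remaining term is bounded by $\frac{C}{R^2}\iint_{Q_R}|Xu|^2$.

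\textbf{The error terms.} The second integral on the right is bounded by
\[
\varepsilon\iint\xi^2\eta^2|\partial_t u|^2+C_\varepsilon\iint\xi^2\big(\|c\|_\infty^2+|X\eta|^2\big)|Xu|^2 .
\]
Since $R<1$, $\|c\|_\infty^2\le C/R^2$ and $|X\eta|^2\le C/R^2$, so this is at most $\varepsilon\iint\xi^2\eta^2|\partial_t u|^2+\frac{C}{R^2}\iint_{Q_R}|Xu|^2$. Choosing $\varepsilon=1/2$ absorbs the $\partial_t u$ term into the left-hand side. Because $\xi\eta\equiv1$ on $Q_{R/2}$, this yields \eqref{108}.

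\textbf{Main obstacle.} The delicate point is the time integration by parts and the sign of the top-time boundary term. One needs $\xi$ to equal $1$ up to the top of the cylinder, so that the boundary contribution at $t_0+R^2$ enters with a favourable (negative) sign and can be discarded. No cut-off is needed at the top, exactly as in \eqref{left1}.

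A secondary point is the commutation $X_i\partial_t=\partial_t X_i$. This holds because $\sigma$ is time-independent, and it is what avoids any non-commutativity issue between $X$ and $\partial_t$.

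If one only wanted the estimate with an extra term $\frac{C}{R^4}\iint u^2$ on the right, an alternative route is available: write $\partial_t u=\Delta_{\mathcal X}u$ and combine \eqref{Ca20} with \eqref{Xuu}. The direct test-function argument above gives precisely the stated form, with only $|Xu|^2$ on the right.
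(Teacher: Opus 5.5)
Your proposal is correct and is essentially the paper's own proof: test the equation with $\xi^2\eta^2\partial_t u$, integrate by parts using $X_i^*=-X_i+c_i$, rewrite $-\iint\xi^2\eta^2X_i\partial_t u\,X_iu$ as a time derivative of $|Xu|^2$ with a nonpositive top boundary term, and absorb the remaining terms via the $\varepsilon$-Cauchy inequality. One minor remark: the term $-\frac12\int_{B_R}\xi^2\eta^2|Xu|^2\big|_{t=t_0+R^2}$ is nonpositive for any $\xi$, so you need $\xi\equiv1$ near the top only to get $\xi\eta\equiv1$ on $Q_{R/2}$, not for the sign.
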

\begin{proof}
Let $\xi$ and $\eta$ be cutoff functions choosen in the proof of Proposition \ref{Ca31}. Multiply the equation \eqref{linear2} with $f\equiv0$ by $\xi^2\eta^2\partial_t u$ and integrate on $Q_{R}$. By $X^*_i=-X_i+c_i$, we get
\begin{align*}
\iint_{Q_{R}}\xi^2\eta^2|\partial_t u|^2dxdt
=&\iint_{Q_{R}}X_i^*(\xi^2\eta^2\partial_tu)X_iudxdt\\
=&-\iint_{Q_{R}}\xi^2\eta^2X_iuX_i\partial_tudxdt
-2\iint_{Q_{R}}\xi^2\eta X_i\eta\partial_tuX_iu dxdt\\
&+\iint_{Q_{R}}c_i\xi^2\eta^2\partial_t uX_iudxdt.
\end{align*}
Similar to \eqref{left1}, we have
\begin{align*}
-\iint_{Q_{R}}\xi^2\eta^2X_iuX_i\partial_tudxdt
&=-\frac{1}{2}\int_{B_{R}}\xi^2\eta^2|X_iu|^2\Big|_{t=t_0+R^2}dx
+\iint_{Q_{R}}\xi\xi'\eta^2|X_iu|^2dxdt\\
&\leq\iint_{Q_{R}}\xi\xi'\eta^2|Xu|^2dxdt.
\end{align*}
Then the $\varepsilon$-Cauchy inequality gives that
\begin{align*}
&\quad\iint_{Q_{R}}\xi^2\eta^2|\partial_t u|^2dxdt\\
&\leq C\iint_{Q_{R}}\xi|\xi'|\eta^2|Xu|^2dxdt+C\iint_{Q_{R}}\xi^2\eta \big(\eta+|X\eta|\big)|\partial_tu||Xu| dxdt\\
&\leq C\iint_{Q_{R}}\xi|\xi'|\eta^2|Xu|^2dxdt+2\varepsilon\iint_{Q_{R}}\xi^2\eta^2|\partial_t u|^2dxdt\\
&\quad +C_\varepsilon\iint_{Q_{R}}\xi^2\big(\eta^2+|X\eta|^2\big)|Xu|^2dxdt\\
&\leq 2\varepsilon\iint_{Q_{R}}\xi^2\eta^2|\partial_t u|^2dxdt+\frac{C}{R^2}\iint_{Q_{R}}|Xu|^2dxdt.
\end{align*}
Taking $\varepsilon$ small enough, we have
\begin{align*}
\iint_{Q_{R/2}}|\partial_tu|^2dxdt&\leq\iint_{Q_{R}}\xi^2\eta^2|\partial_tu|^2dxdt\leq \frac{C}{R^{2}}\iint_{Q_{R}}|Xu|^2dxdt,
 \end{align*}
and the result follows.
\end{proof}
\begin{Prop}\label{X2uuR}
Let $u$ be a  solution of the equation \eqref{linear2} with $f\equiv0$. Then for all $(t_0,x_0)\in[0,T]\times\mathbb{R}^d)$, all  $0<\rho\leq R<1$ such that $Q_{R}(t_0,x_0)\subset [0,T]\times\Omega$, there exists a positive constant $C$ such that
\begin{equation}\label{X2uR}
\iint_{Q_\rho}|X^2 u-(X^2 u)_{Q_{R}}|^2dxdt\leq C\left(\frac{\rho}{R}\right)^{Q+4}\iint_{Q_R}|X^2 u-(X^2 u)_{Q_{R}}|^2dxdt,
\end{equation}
where $Q$ is the homogeneous dimension with respect to the vector fields $\mathcal{X}$.
\end{Prop}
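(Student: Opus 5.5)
The plan is the standard Campanato decay argument. First I reduce to the case $\rho\le R/4$, say. If $R/4<\rho\le R$, then \eqref{X2uR} is trivial with $C=4^{Q+4}$, because $Q_\rho\subset Q_R$.

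For $\rho\le R/4$, I begin by replacing $(X^2u)_{Q_R}$ by $(X^2u)_{Q_\rho}$ on the left-hand side. The mean over $Q_\rho$ minimises $\lambda\mapsto\iint_{Q_\rho}|X^2u-\lambda|^2$, so this only costs a factor. Next I bound the oscillation on the small cylinder by the $L^\infty$ norm of derivatives of $X^2u$ on $Q_{R/4}$. By the parabolic Poincar\'e inequality of Proposition \ref{poin9} applied to $v=X^2u$,
\[
\iint_{Q_\rho}|X^2u-(X^2u)_{Q_\rho}|^2\le C\rho^{2}\iint_{Q_\rho}|X^3u|^2+C\rho^4\iint_{Q_\rho}|\partial_tX^2u|^2\le C\rho^{Q+4}\Big(\sup_{Q_{R/4}}|X^3u|^2+\rho^2\sup_{Q_{R/4}}|\partial_t X^2u|^2\Big).
\]
This uses $|Q_\rho|\simeq\rho^{Q+2}$ from \eqref{2.8}. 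Both $X^3u$ and $\partial_tX^2u$ are controlled by $\sup|X^ku|$ for $k\le 5$, since $\partial_t u=\Delta_{\mathcal X}u$.

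The next step is to bound these suprema by $L^2$ norms using the higher Caccioppoli estimate \eqref{Caf0} of Proposition \ref{Ca31}. Combined with a Sobolev embedding in intrinsic spaces (for instance via \eqref{1019}, or a local subelliptic Sobolev/Morrey embedding at scale $R$ after rescaling), this gives
\[
\sup_{Q_{R/4}}|X^ku|^2\le \frac{C}{R^{Q+2+2k}}\iint_{Q_R}u^2 .
\]
At this point the exponent does not match the claim. I would obtain $\rho^{Q+4}R^{-Q-8}\iint u^2$, not a bound involving $X^2u-(X^2u)_{Q_R}$. The crucial step is therefore to apply all of this not to $u$ but to a corrected function $w=u-P$. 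Here $P$ is a polynomial corrector such that $\mathcal H_0P=0$ (or $\mathcal H_0 P$ is harmless), $X^2P$ equals the constant matrix $(X^2u)_{Q_R}$, and $u-P$ together with $X(u-P)$ has mean zero on $Q_R$. For $P$ I use the adapted coordinates of Lemma \ref{xue2lem22} and the polynomials $h(x')$ of Lemma \ref{xue2cor22}, for which $Y_Ih=C_I$ when $|I|=2$. I would add a term $t\cdot(\text{trace of } C)$ so that $\partial_tP-\Delta_{\mathcal X}P$ vanishes at top order.

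With such a $P$, the following chain holds: $X^3w=X^3u$ (up to lower-order terms from the noncommutativity), $\sup|X^3w|$ is bounded by $R^{-Q-8}\iint_{Q_R}w^2$, and two applications of the Poincar\'e inequality give $\iint_{Q_R}w^2\le CR^4\iint_{Q_R}|X^2u-(X^2u)_{Q_R}|^2$. Here the $\partial_t$ terms are converted back to $X^2$ terms via Proposition \ref{102}. The powers of $R$ then combine to $(\rho/R)^{Q+4}$.

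The main obstacle is constructing the corrector $P$. The vector fields do not commute and $\sigma$ is not homogeneous, so $X_iX_jh$ is constant only at top order. Moreover, $\mathcal H_0 P$ is not exactly zero, and $X_iX_jP=C_{ij}$ holds only modulo lower-order terms of size $O(R)$ on $B_R$. My first approach is to accept these errors as a right-hand side $\mathcal H_0 w=-\mathcal H_0 P$, which is smooth and small. I would then show that their contribution is dominated by the main term, possibly after shrinking $R_0$; the small-$\varepsilon$ form of Lemma \ref{A4} can absorb such perturbations in the later application. If the lower-order errors spoil the exact inequality, I would settle for \eqref{X2uR} with an extra $+\varepsilon\iint_{Q_R}|X^2u-(X^2u)_{Q_R}|^2$, which is all that Lemma \ref{A4} needs.
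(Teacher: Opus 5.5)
Your overall architecture matches the paper's. You use the pointwise bounds \eqref{equation1}--\eqref{equation2} with the higher Caccioppoli estimate \eqref{Caf0}, the parabolic Poincar\'e inequality \eqref{poincare91}, the trade of time derivatives via $\partial_tu=\Delta_{\mathcal X}u$ and \eqref{108}, and polynomial correctors from Lemma \ref{xue2cor22}. The paper subtracts correctors in stages (constant, linear, quadratic) rather than as one $P$, which is cosmetic. Two steps fail as written.

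\textbf{The opening reduction runs the wrong way.} Minimality of the mean gives $\iint_{Q_\rho}|X^2u-(X^2u)_{Q_\rho}|^2\le\iint_{Q_\rho}|X^2u-(X^2u)_{Q_R}|^2$. So bounding the left side says nothing about the quantity in the statement. In fact the inequality with $(X^2u)_{Q_R}$ on the left is false. As $\rho\to0$ its left side behaves like $|Q_\rho|\,|X^2u(t_0,x_0)-(X^2u)_{Q_R}|^2\simeq\rho^{Q+2}$ times a generally nonzero constant, while the right side is $O(\rho^{Q+4})$. For instance, on the Heisenberg group $u=x_1^4+12tx_1^2+12t^2$ solves $\partial_tu=\Delta_{\mathcal X}u$ with $X_1^2u=12x_1^2+24t$, so $X_1^2u(t_0,0)<(X_1^2u)_{Q_R(t_0,0)}$. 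The provable statement has $(X^2u)_{Q_\rho}$ on the left. That is what the paper's last display establishes and all that Proposition \ref{729} needs, so state it as your target rather than as a reduction.

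\textbf{The genuine gap is the corrector.} For $P=P(t,x')$ as in Lemma \ref{xue2cor22} you have $Y_jP=\partial_{x_j}P$. Hence $Y_iY_jP$ is symmetric, and $[X_i,X_j]P$ is a fixed first-order expression in $YP$ coming only from derivatives of $T(x)$; it vanishes when $T$ is constant. The target matrix $(X_iX_ju)_{Q_R}$, however, has antisymmetric part $\tfrac12([X_i,X_j]u)_{Q_R}$. This involves derivatives of $u$ along weight-two directions and is independent of $(Xu)_{Q_R}$, which already pins down $YP$ through $(Xw)_{Q_R}=0$. So no such $P$ achieves $X^2P\approx(X^2u)_{Q_R}$: the mismatch is generically of order one, not $O(R)$. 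Then $\iint_{Q_R}|X^2w|^2$ is not controlled by $\iint_{Q_R}|X^2u-(X^2u)_{Q_R}|^2$, and your double-Poincar\'e step breaks. In Example \ref{Hes}, $X_1X_2P=X_2X_1P=\partial_{x_1}\partial_{x_2}P$, whereas $X_1X_2u-X_2X_1u=2\partial_{x_3}u$. The missing ingredient is a graded Taylor polynomial of homogeneous degree $\le2$ that includes the weight-two coordinates. Here that means $x_3$, with $X_1X_2x_3=1$, $X_2X_1x_3=-1$, $\Delta_{\mathcal X}x_3=0$; in general it is built on the lifted free structure.

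\textbf{Your fallback does not repair this.} Even with a graded corrector, non-constant $T(x)$ and non-homogeneous fields leave residuals in $X^2P-(X^2u)_{Q_R}$, $\mathcal H_0P$ and $X^3P$ of size $RN$ or $N$, where $N\simeq\sup_{Q_R}(|Xu|+|X^2u|)$. These scale with the size of derivatives, not with the oscillation, which can vanish while $N$ is large. So they can neither be ``dominated by the main term'' nor written as $\varepsilon\iint_{Q_R}|X^2u-(X^2u)_{Q_R}|^2$, and the $\varepsilon$ in Lemma \ref{A4} does not absorb them.

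They must instead be carried as additive terms of order $R^{Q+4}N^2$. This needs one more splitting of $w$, since \eqref{Caf0} requires $\mathcal H_0w=0$. Such terms fit the $C_2R^\beta$ slot of Lemma \ref{A4} because $Q+4>Q+2+2\alpha$, and $N$ is absorbed at the end by Corollary \ref{P}. The outcome is a perturbed version of \eqref{X2uR}, not \eqref{X2uR} itself.

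The paper's proof does not settle this point either. It asserts $\partial_th-\Delta_{\mathcal X}h=0$ and $X_iX_jh=(X_iX_ju)_{Q_R}$ for a quadratic $h(x')$ by citing Lemma \ref{xue2cor22}, which concerns the adapted frame $Y$ and yields only symmetric second derivatives.
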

\begin{proof}
To begin with, we recall that  Proposition 2.4 in \cite{XZ97} shows that for any $t\in[0,T]$, $k>\frac{Q}{2}$, we have
\begin{equation}\label{equation1}
\sup\limits_{x\in  B_{R/4}}|u(t,x)|\leq C\sum\limits_{|I|\leq k}R^{|I|-\frac{Q}{2}}\|X^Iu(t,x)\|_{L^2(B_R)},
\end{equation}
and for any $x\in\mathbb{R}^d$, $k>1$, we have
\begin{equation}\label{equation2}
\sup\limits_{t\in (0,\frac{R^2}{16})}|u(t,x)|\leq C\sum\limits_{2i\leq k}R^{2i-1}\|\partial^i_tu(t,x)\|_{L^2(0,R^2)}.
\end{equation}

Let $k,k_1,k_2$ be some nonnegative integers.  For the case $0<\rho\leq\frac{R}{2^{k_1+k_2+k+2}}$, it follows from \eqref{equation1}, \eqref{equation2} and \eqref{Caf0} that

\begin{align}\label{354}
\begin{split}
&\quad\iint_{Q_\rho}|X^k u|^2dxdt\\
   &\leq \int^{\rho^2}_{0}|B_\rho|\sup_{x\in B_{R/2^{k_1+k_2+k+2}}}|X^k u|^2dt\\
  &\leq C\int^{\rho^2}_{0}|B_\rho|\sum_{|I|\leq k_1}R^{2|I|-Q}\int_{B_{R/2^{k_1+k_2+k}}}|X^IX^k u|^2dxdt\\
  &\leq C\rho^Q\sum_{|I|\leq k_1}R^{2|I|-Q}\int_{B_{R/2^{k_1+k_2+k}}}\rho^2\sup_{t\in(0,\rho^2)}|X^{I}X^k u|^2dx\\
  &\leq  C\rho^2\left(\frac{\rho}{R}\right)^{Q}\sum_{|I|\leq k_1}R^{2|I|}\int_{B_{R/2^{k_1+k_2+k}}}\sum_{2i\leq k_2}R^{4i-2}\int^{\frac{R^2}{4^{k_1+k_2+k}}}_{0}|\partial^i_tX^{I}X^k u|^2dt dx\\
    &\leq  C\left(\frac{\rho}{R}\right)^{Q+2}\sum_{|I|+2i\leq k_1+k_2}R^{2|I|+4i}\iint_{Q_{R/2^{k_1+k_2+k}}}|\partial^i_tX^{I}X^k u|^2dxdt\\\
  &\leq \frac{C}{R^{2k}}\left(\frac{\rho}{R}\right)^{Q+2}\iint_{Q_{R}}u^2dxdt.
  \end{split}
 \end{align}
Since $u-(u)_{Q_{R/2}}$ is still a solution to the equation \eqref{linear2}, for any $k\geq1$, we have
 $$\iint_{Q_\rho}|X^k u|^2dxdt \leq \frac{C}{R^{2k}}\left(\frac{\rho}{R}\right)^{Q+2}\iint_{Q_{R/2}}|u-(u)_{Q_{R/2}}|^2dxdt.$$
 It follows from the Poincar\'{e}-type inequality given in Proposition \ref{poin9} and the estimate \eqref{108} that
 \begin{align*}
\iint_{Q_\rho}|X^ku|^2dxdt&\leq \frac{C}{R^{2k}}\left(\frac{\rho}{R}\right)^{Q+2}\Big(R^2\iint_{Q_{R/2}}|Xu|^2dxdt
+R^4\iint_{Q_{R/2}}|\partial_t u|^2dxdt\Big)\\
&\leq \frac{C}{R^{2k-2}}\left(\frac{\rho}{R}\right)^{Q+2}\iint_{Q_{R}}|Xu|^2dxdt.
\end{align*}
By using Lemma \ref{xue2cor22}, take $$h(x)=\sum^{q}_{i=1} {(X_iu)}_{Q_{R}}(x'_{i}-x'_{0i}),$$
then we have $\partial_t h-\Delta_\mathcal {X}h=0$ and $X_ih= {(X_iu)}_{Q_{R}}$, which implies that $u-h$ is still a solution of the equation \eqref{linear2}. Then  for $k\geq2$, we have
\begin{align}\label{109}
\begin{split}
&\quad\iint_{Q_{\rho}}|X^ku|^2dxdt\\&\leq \frac{C}{R^{2k-2}}\left(\frac{\rho}{R}\right)^{Q+2}\iint_{Q_{R}}|Xu-(Xu)_{Q_{R}}|^2dxdt\\
&\leq \frac{C}{R^{2k-2}}\left(\frac{\rho}{R}\right)^{Q+2}\Big(R^2\iint_{Q_{R}}|X^2u|^2dxdt
+R^4\iint_{Q_{R}}|\partial_tXu|^2dxdt\Big),
\end{split}
\end{align}
the last inequality is provided by the Poincar\'{e} type inequality \eqref{poincare91}.

Since $\partial_tu$ is a solution of the equation \eqref{linear2} on $Q_{R}$, then the inequality \eqref{xue29822} is also valid for $\partial_tu$, that is
\begin{align}\label{222}
\iint_{Q_{R/2}}|X\partial_tu|^2dxdt\leq \frac{C}{R^2}\iint_{Q_{R}}|\partial_t u|^2dxdt\leq \frac{C}{R^2}\iint_{Q_{R}}|X^2u|^2dxdt.
\end{align}
Combined  \eqref{109} with \eqref{222}, we get
\begin{align*}
\iint_{Q_{\rho}}|X^ku|^2dxdt\leq \frac{C}{R^{2k-4}}\left(\frac{\rho}{R}\right)^{Q+2}\iint_{Q_{R}}|X^2u|^2dxdt.
\end{align*}
Up to now, the derivative order on the right hand of the last estimate  has increased to second order. Then we will construct the solution and obtain an estimation form containing the mean value. By using Lemma \ref{xue2cor22} again, take
 $$h(x)= t\sum^{q}_{i=1}{(X^2_iu)}_{Q_{R}} -\frac{1}{2}\sum^{q}_{i,j=1} {(X_iX_ju)}_{Q_{R}}(x'_{i}-x'_{0i})(x'_{j}-x'_{0j}).$$
 Since $\partial_t h-\Delta_\mathcal {X}h=0$ and $X_iX_jh= {(X_iX_ju)}_{Q_{R}}$, then $u-h$ still a solution of the equation \eqref{linear2}. If $k\geq3$, then we have
\begin{align}\label{XkX2u}
\iint_{Q_{\rho}}|X^ku|^2dxdt\leq \frac{C}{R^{2k-4}}\left(\frac{\rho}{R}\right)^{Q+2}\iint_{Q_{R}}|X^2u- {(X^2u)}_{Q_{R}}|^2dxdt.
\end{align}
Furthermore, combined the estimate \eqref{XkX2u} with the Poincar\'{e} type inequality \eqref{poincare91} again, since $\partial_t u=\Delta_\mathcal {X}u$, we have
\begin{align*}
&\iint_{Q_{\rho}}|X^2u- {(X^2u)}_{Q_\rho}|^2dxdt\\
\leq& C\Big(\rho^2\iint_{Q_{R}}|X^3u|^2dxdt+\rho^4\iint_{Q_{R}}|\partial_tX^2u|^2dxdt\Big)\\
\leq& C\Big(\rho^2\iint_{Q_{R}}|X^3u|^2dxdt+\rho^4\iint_{Q_{R}}|X^4u|^2dxdt\Big)\\
\leq& C\left(\frac{\rho}{R}\right)^{Q+2}\left[\left(\frac{\rho}{R}\right)^2
+\left(\frac{\rho}{R}\right)^4\right]\iint_{Q_{R}}|X^2u- {(X^2u)}_{Q_{R}}|^2dxdt,
\end{align*}
which implies that the estimate \eqref{X2uR} is valid for any $0<\rho\leq\frac{R}{2^{k_1+k_2+k+2}}$. For the case $\frac{R}{2^{k_1+k_2+k+2}}<\rho\leq R$, just take $C=2^{(k_1+k_2+k+2)(Q+4)}$, then the estimate \eqref{84} holds and the result follows.
\end{proof}
The corresponding result on the ball $Q^0_R$ holds as follows.
\begin{Prop}\label{X2uuR0}
Let $u$ be a solution of the equation \eqref{linear2} with $f\equiv0$. Then for all $(t_0,x_0)\in[0,T]\times\mathbb{R}^d$, all  $0<\rho\leq R<1$, $Q_{R}(0,x_0)\subset [0,T]\times\Omega$, there exists a positive constant $C$ such that
\begin{equation}\label{84}
\iint_{Q^0_\rho}|X^2 u|^2dxdt\leq C\left(\frac{\rho}{R}\right)^{Q+4}\iint_{Q^0_R}|X^2 u|^2dxdt.
\end{equation}
\end{Prop}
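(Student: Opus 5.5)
The plan is to mirror the proof of Proposition \ref{X2uuR}, with two structural simplifications coming from the zero initial datum $u(0,\cdot)=0$ on $B_R$. No mean value needs to be subtracted. Instead of correcting by polynomials, one uses that $u$ and all its derivatives $X^Iu$, $\partial_t^iu$ vanish at $t=0$. This replaces the Poincar\'{e}-type inequality \eqref{poincare91} by its ``vanishing trace'' analogue in time.

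The first step is the pointwise bound. For $0<\rho\leq R/2^{k_1+k_2+k+2}$, combine the spatial Sobolev estimate \eqref{equation1} in $x$ with the one-dimensional estimate \eqref{equation2} in $t$ on $(0,\rho^2)$. Then apply the boundary Caccioppoli estimate \eqref{0Caf0} of Proposition \ref{0Ca20}, exactly as in \eqref{354}. This gives
\[
\iint_{Q^0_\rho}|X^ku|^2dxdt\leq \frac{C}{R^{2k}}\Big(\frac{\rho}{R}\Big)^{Q+2}\iint_{Q^0_R}u^2dxdt .
\]
Time derivatives are controlled by $\partial_t u=\Delta_{\mathcal X}u$. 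Since $\partial_tu$ also solves \eqref{linear2} with zero initial value, the boundary analogues of \eqref{0Caf0} and \eqref{108} apply to it as well.

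The second step lowers the right-hand side to second derivatives. Since $u(0,x)=0$, the estimate $|u(t,x)|^2\leq t\int_0^t|\partial_su|^2ds$ gives $\iint_{Q^0_R}u^2\leq CR^4\iint_{Q^0_R}|\partial_tu|^2=CR^4\iint_{Q^0_R}|\Delta_{\mathcal X}u|^2$. This yields
\[
\iint_{Q^0_\rho}|X^ku|^2\leq \frac{C}{R^{2k-4}}\Big(\frac{\rho}{R}\Big)^{Q+2}\iint_{Q^0_R}|X^2u|^2 .
\]
Strictly, this holds with $Q^0_{R/2}$ on the left after applying the Caccioppoli bound on the smaller cylinder. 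Taking $k=2$ directly gives only the power $(\rho/R)^{Q+2}$, so two more powers are needed.

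The third step gains those extra powers, and it is the main obstacle. I would use once more that $X^2u$ vanishes at $t=0$: $X^2u(t,x)=\int_0^t\partial_sX^2u\,ds$, so
\[
\iint_{Q^0_\rho}|X^2u|^2\leq C\rho^4\iint_{Q^0_\rho}|\partial_tX^2u|^2=C\rho^4\iint_{Q^0_\rho}|X^2\Delta_{\mathcal X}u|^2\leq C\rho^4\iint_{Q^0_\rho}|X^4u|^2 .
\]
Applying the previous bound with $k=4$ then gives $C\rho^4R^{-4}(\rho/R)^{Q+2}\iint_{Q^0_R}|X^2u|^2$, which is $(\rho/R)^{Q+6}$ and stronger than needed. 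The care needed here is that commuting $\partial_t$ with $X^2$ is legitimate because the vector fields are time-independent. I would also check that $X^2\Delta_{\mathcal X}u$ is bounded by $|X^4u|$ without commuting any $X_i$, which is immediate since it is a sum of fourth-order words $X_iX_jX_lX_l u$.

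Finally, for $R/2^{k_1+k_2+k+2}<\rho\leq R$, the estimate \eqref{84} is trivial with constant $C=2^{(k_1+k_2+k+2)(Q+4)}$, since the left side is at most the right side integral.
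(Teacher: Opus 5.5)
Your proposal is correct and is essentially the paper's proof. The paper also derives the decay bound \eqref{727} as in \eqref{354}. It then uses the vanishing initial trace (its claim \eqref{t}) to get $\iint_{Q^0_\rho}|X^2u|^2\leq C\rho^4\iint_{Q^0_\rho}|\partial_tX^2u|^2\leq C\rho^4\iint_{Q^0_\rho}|X^4u|^2$, and applies that same inequality with $k=0$ to replace $\iint_{Q^0_R}u^2$ by $CR^4\iint_{Q^0_R}|X^2u|^2$, reaching the stronger power $(\rho/R)^{Q+6}$; the only difference is that you do the $k=0$ reduction first rather than at the end.
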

\begin{proof}
If $\rho$ is big enough, it is easy to check that the inequality \eqref{84} holds. Otherwise, by the same argument as the estimate \eqref{354}, we have
\begin{equation}\label{727}
   \iint_{Q^0_\rho}|X^k u|^2dxdt \leq \frac{C}{R^{2k}}\left(\frac{\rho}{R}\right)^{Q+2}\iint_{Q^0_R}u^2dxdt.
\end{equation}
 Now we claim that
\begin{equation}\label{t}
 \iint_{Q^0_\rho}|X^k u|^2dxdt\leq C\rho^4\iint_{Q^0_\rho}|\partial_tX^k u|^2 dxdt.
\end{equation}
In fact, since $u\big|_{t=0}=0$, we have $X^k u\big|_{t=0}=0$. Then by using the Leibniz formula and the H\"{o}lder inequality, we have
 \begin{align*}
   \int^{\rho^2}_0|X^k u(s)|^2 ds&= \int^{\rho^2}_0\Big(\int^s_0\partial_tX^k u(\xi)d\xi\Big)^2 ds\\
   &\leq \int^{\rho^2}_0s\int^s_0|\partial_tX^k u(\xi)|^2d\xi ds\\
   &\leq C\rho^4\int^{\rho^2}_0|\partial_tX^k u(\xi)|^2d\xi,
 \end{align*}
which implies that the claim \eqref{t} is valid.

Since $|\partial_tX^2 u|\leq C|X^4u|$, it follows from \eqref{727} and \eqref{t} that
\begin{align*}
  \iint_{Q^0_{\rho}}|X^2 u|^2dxdt &\leq C\rho^4\iint_{Q^0_\rho}|X^4u|^2dxdt\\
  &\leq C\rho^4\left(\frac{\rho}{R}\right)^{Q+2}\frac{1}{R^{8}}\iint_{Q^0_{R}}u^2dxdt\\
   &\leq C\left(\frac{\rho}{R}\right)^{Q+6}\iint_{Q^0_{R}}|X^2 u|^2dxdt,
\end{align*}
the last inequality is provided by \eqref{t} with $k=0$. Hence the estimate \eqref{84} is valid.
\end{proof}

Next, we make the decomposition of the solution $u$ to the equation \eqref{linear2}  on $Q_{R}$ as follows
$$u=u_1+u_2,$$
where $u_1$ satisfies
\begin{equation}\label{355}
\left\{
 \begin{array}{ll}
  \partial_t u_1-\Delta_\mathcal {X}u_1=(f)_{Q_{R}}, \ (t,x)\in Q_{R},\\
  u_1\big|_{\partial_p Q_{R}}=u\big|_{\partial_p Q_{R}},
 \end{array}
\right.
\end{equation}
 and $u_2$ satisfies
\begin{equation}\label{356}
\left\{
 \begin{array}{ll}
  \partial_t u_{2}-\Delta_\mathcal {X}u_{2}=f-(f)_{Q_{R}}, \ (t,x)\in Q_{R},\\
u_{2}\big|_{\partial_p Q_{R}}=0.
 \end{array}
\right.
\end{equation}
Here $(f)_{Q_{R}}$ is the average of $f$ on $Q_{R}$ and $\partial_p Q_{R}$ is defined in \eqref{bound}, for any $0<R<1$.

Then we give some estimates for $u_{2}$ as follows.
 \begin{Prop}\label{pro1}
Let  $u_2$ be a solution of the equation \eqref{356} on $Q_{R}$. For $0<R\leq1$, there exists a positive constant $C$ such that
\begin{align}\label{Xu2UR}
\begin{split}
\iint_{Q_{R}}|Xu_{2}|^2dxdt&\leq CR^2\iint_{Q_{R}}|f-(f)_{Q_{R}}|^2dxdt.
\end{split}
\end{align}
\end{Prop}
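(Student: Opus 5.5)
We would prove \eqref{Xu2UR} by an energy estimate followed by a Poincar\'e-type inequality exploiting the zero parabolic boundary data of $u_2$. Write $\tilde f:=f-(f)_{Q_R}$. We multiply \eqref{356} by $u_2$ and integrate over $Q_R$; no cutoff functions are needed, since $u_2$ vanishes on $\partial_pQ_R$. Writing $X_i^*=-X_i+c_i$ as in the proof of Proposition \ref{Ca31}, integration by parts in $x$ produces no boundary terms, because $u_2=0$ on $(t_0-R^2,t_0+R^2)\times\partial B_R$. In $t$, the term $\iint u_2\partial_tu_2=\tfrac12\int_{B_R}u_2^2\big|_{t=t_0+R^2}dx\geq0$, since $u_2=0$ at $t=t_0-R^2$. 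This gives
\begin{equation*}
\iint_{Q_R}|Xu_2|^2dxdt\leq C\iint_{Q_R}|u_2||Xu_2|dxdt+\iint_{Q_R}|u_2||\tilde f|dxdt.
\end{equation*}
The lower-order term $\int c_iu_2X_iu_2$ can alternatively be rewritten as $\tfrac12\int c_iX_i(u_2^2)$ and integrated once more by parts, giving $-\tfrac12\int X_i^*(c_i)\,u_2^2$. In either form it is bounded by $C\iint u_2^2$.

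The key tool is a Poincar\'e inequality for functions vanishing on the lateral boundary: for each fixed $t$, $u_2(t,\cdot)$ vanishes on $\partial B_R$, so extending by zero we expect
\begin{equation*}
\int_{B_R}u_2^2dx\leq CR^2\int_{B_R}|Xu_2|^2dx.
\end{equation*}
We would derive this from Proposition \ref{poin} applied on the doubled ball $B_{2R}$ to the zero extension. Since the extension vanishes on $B_{2R}\setminus B_R$, which has measure comparable to $|B_{2R}|$ by \eqref{2.8}, the mean $(u_2)_{B_{2R}}$ is controlled: by Cauchy--Schwarz on the set where the function vanishes, $|(u_2)_{B_{2R}}|^2|B_{2R}\setminus B_R|\leq\int_{B_{2R}}|u_2-(u_2)_{B_{2R}}|^2$. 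Combining this with \eqref{p11} gives the displayed bound, provided $2R\leq R_0$; for $R\leq 1$ with $R>R_0/2$ we can simply adjust the constant, as \eqref{2.8} holds for bounded radii on the compact region.

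With this in hand, the right-hand side of the energy estimate is handled by the $\varepsilon$-Cauchy inequality: $\iint|u_2||\tilde f|\leq \varepsilon R^{-2}\iint u_2^2+C_\varepsilon R^2\iint\tilde f^2\leq C\varepsilon\iint|Xu_2|^2+C_\varepsilon R^2\iint\tilde f^2$, and $\iint u_2^2\leq CR^2\iint|Xu_2|^2$, which is small relative to the left side once $R$ is small. Absorbing yields \eqref{Xu2UR}.

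The main obstacle is the lower-order term from $c_i$ (equivalently, from the non-selfadjointness of $\Delta_\mathcal{X}$). Its contribution is $C\iint|u_2||Xu_2|\leq CR\iint|Xu_2|^2$, absorbable only for $R\leq R_1$ small. For $R_1<R\leq1$ we would instead use a Gronwall-type argument in $t$: from $\tfrac{d}{dt}\int u_2^2+\int|Xu_2|^2\leq C\int u_2^2+\int \tilde f^2$, integrating over $t$ and using the Poincar\'e bound gives the estimate with a constant depending on $R_1$, which is harmless since $R$ is then comparable to $1$.
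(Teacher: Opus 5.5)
Your proposal is correct and follows the paper's proof essentially step for step. You test \eqref{356} with $u_2$, drop the nonnegative time term, control $u_2$ by $Xu_2$ through a spatial Poincar\'e inequality for functions vanishing on $\partial B_R$, and absorb after the $\varepsilon$-Cauchy inequality once $R$ is small. Beyond that, you fill in two points the paper passes over (it simply cites \eqref{p11} and takes $R$ small): the zero-extension derivation of that Poincar\'e inequality from \eqref{p11}, and the Gronwall argument for $R$ not small. One detail in the extension step: the lower bound on $|B_{2R}\setminus B_R|$ does not follow from \eqref{2.8} with factor $2$ alone, so get it from an interior ball of radius $R/2$ in the annulus or from a larger dilation $B_{KR}$.
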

\begin{proof}
Multiply both sides of the equation \eqref{356} by $u_{2}$ and integrate on $Q_{R}$, then we get
\begin{align}\label{953}
\begin{split}
     \iint_{Q_{R}}u_{2}\partial_tu_{2}dxdt-\iint_{Q_{R}}X^*_iu_{2}X_iu_{2}dxdt
     =\iint_{Q_{R}}(f-(f)_{Q_{R}})u_{2}dxdt.
\end{split}
\end{align}
For the left hand of \eqref{953}, since $u_{2}(0,x)=0$ for any $x\in B_R$, we have
\begin{align*}
    \iint_{Q_{R}}u_{2}\partial_tu_{2}dxdt
=\frac{1}{2}\iint_{Q_{R}}\partial_tu_{2}^2dxdt=\frac{1}{2}\int_{B_{R}}u_{2}^2\Big|_{t=R^2}dx\geq0.
\end{align*}
Sine $X_i^*=-X_i+c_i$, then it follows from  \eqref{953} that
\begin{align}\label{954}
\begin{split}
    \iint_{Q_{R}}|X_iu_{2}|^2dxdt\leq \iint_{Q_{R}}|f-(f)_{Q_{R}}||u_{2}|dxdt+\iint_{Q_{R}}|c_i||u_{2}||X_iu_{2}|dxdt.
\end{split}
\end{align}
Since $u_2=0$ on $\partial_pQ_{R}$,
using the Poincar\'{e} inequality \eqref{p11} on the spatial variable, we have
\begin{align}\label{p1}
\begin{split}
\iint_{Q_{R}}u_{2}^2dxdt
\leq  CR^2\iint_{Q_{R}}|Xu_2|^2dxdt.
\end{split}
\end{align}
It follows from the H\"{o}lder inequality and \eqref{p1} that
 \begin{align}\label{956}
\begin{split}
  \iint_{Q_{R}}|c_i||u_{2}||X_iu_{2}|dxdt
  &\leq C\Big(\iint_{Q_{R}}u_{2}^2dxdt\Big)^{\frac{1}{2}}\Big(\iint_{Q_{R}}|Xu_{2}|^2dxdt\Big)^{\frac{1}{2}}\\
  &\leq C R\iint_{Q_{R}}|Xu_{2}|^2dxdt.
  \end{split}
\end{align}
Similarly, combining the $\varepsilon$-Cauchy inequality with \eqref{p1}, we have
\begin{align}\label{29}
\begin{split}
   \iint_{Q_{R}}|f-(f)_{Q_{R}}||u_{2}|dxdt&\leq \frac{\varepsilon}{R^2}\iint_{Q_{R}}u_{2}^2dxdt+CR^2\iint_{Q_{R}}|f-(f)_{Q_{R}}|^2dxdt\\
  &\leq \varepsilon\iint_{Q_{R}}|Xu_{2}|^2dxdt+C R^2 \iint_{Q_{R}}|f-(f)_{Q_{R}}|^2dxdt.
  \end{split}
\end{align}
By choosing  $R$ such that $C R\leq\varepsilon$, then using \eqref{29} and \eqref{956} to \eqref{954}, we have
\begin{align*}
\iint_{Q_{R}}|Xu_{2}|^2dxdt
 &\leq 2\varepsilon\iint_{Q_{R}}|Xu_{2}|^2dxdt+CR^2\iint_{Q_{R}}|f-(f)_{Q_{R}}|^2dxdt.
\end{align*}
Taking $\varepsilon$ small enough, the estimate \eqref{Xu2UR} is valid.
\end{proof}

 By the same argument, the inequality \eqref{Xu2UR} is also valid on $Q^0_R$ as follows.
  \begin{Prop}\label{pro2}
Let  $u_2$ be a solution of the equation \eqref{356} on $Q_{R}^0$. For $0<R\leq1$, there exists a positive constant $C$ such that
\begin{align*}
\iint_{Q^0_{R}}|Xu_{2}|^2dxdt&\leq CR^2\iint_{Q^0_{R}}|f-(f)_{Q_{R}}|^2dxdt.
\end{align*}
\end{Prop}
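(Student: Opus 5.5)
The plan is to repeat the energy argument of Proposition \ref{pro1} on the cylinder $Q^0_R=(0,R^2)\times B_R(x_0)$, using that $u_2$ vanishes on $\partial_pQ^0_R$, i.e. on the bottom $\{t=0\}\times B_R$ and on the lateral boundary $(0,R^2)\times\partial B_R$. In what follows $(f)_{Q_R}$ is read as the average of $f$ over the cylinder where \eqref{356} is posed, here $Q^0_R$, so that $f-(f)_{Q_R}$ is the forcing actually used.

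First I multiply \eqref{356} by $u_2$ and integrate over $Q^0_R$. For the time term, $u_2(0,\cdot)=0$ gives
$$\iint_{Q^0_R}u_2\partial_tu_2\,dxdt=\frac12\int_{B_R}u_2^2\big|_{t=R^2}dx\geq0,$$
so it can be dropped. For the diffusion term I integrate by parts in $x$; the boundary contribution vanishes because $u_2=0$ on the lateral boundary. Using $X_i^*=-X_i+c_i$ this yields the analogue of \eqref{954},
$$\iint_{Q^0_R}|Xu_2|^2\leq\iint_{Q^0_R}|f-(f)_{Q_R}||u_2|+\iint_{Q^0_R}|c_i||u_2||X_iu_2|.$$

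Next I control $u_2$ itself. For each fixed $t$, $u_2(t,\cdot)$ vanishes on $\partial B_R$, so the spatial Poincaré inequality \eqref{p11}, integrated in $t\in(0,R^2)$, gives the analogue of \eqref{p1}:
$$\iint_{Q^0_R}u_2^2\leq CR^2\iint_{Q^0_R}|Xu_2|^2.$$
The $c_i$ term is then bounded by $CR\iint|Xu_2|^2$ using Hölder and the boundedness of $c_i$, which follows from smoothness of $\sigma$ on the torus. The forcing term is handled with the $\varepsilon$-Cauchy inequality weighted by $R^{-2}$, followed by the Poincaré bound, giving
$$\varepsilon\iint|Xu_2|^2+CR^2\iint|f-(f)_{Q_R}|^2.$$

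Finally I restrict to $R$ small so that $CR\leq\varepsilon$, absorb both $|Xu_2|^2$ terms into the left side, and obtain the claim. Larger $R\leq1$ follow by adjusting the constant, exactly as in Proposition \ref{pro1}.

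The only delicate step is the Poincaré inequality for $u_2(t,\cdot)$, since \eqref{p11} is stated with the mean $(u)_{B_R}$ subtracted. The plan is to use the zero-boundary-value version, obtained by extending $u_2(t,\cdot)$ by zero to $B_{2R}$. The extension has mean bounded by a fixed fraction below the $L^2$ norm, thanks to the doubling property \eqref{2.8}, so \eqref{p11} on $B_{2R}$ gives $\int u_2^2\leq CR^2\int|Xu_2|^2$. This is the same ingredient already used in Proposition \ref{pro1}, so it transfers directly.
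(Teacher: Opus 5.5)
Your argument is the one the paper intends. The paper proves Proposition \ref{pro2} only by saying that the proof of Proposition \ref{pro1} carries over to $Q^0_R$, and your energy estimate does exactly that: test with $u_2$, drop $\frac12\int_{B_R}u_2^2|_{t=R^2}\ge0$, apply Poincar\'e slice by slice, and absorb the $c_i$- and forcing terms. You also make the zero-boundary Poincar\'e step explicit, which the paper glosses over.

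For small $R$ this is correct. The step that fails as written is ``larger $R\le1$ follow by adjusting the constant.'' Since $u_2$ is defined by the Dirichlet problem \eqref{356} on $Q^0_R$ itself, solutions for different radii are unrelated. So the small-$R$ estimate cannot be transferred to $R\in(R_1,1]$ ($R_1$ your smallness threshold) just by enlarging $C$. Moreover, absorbing $\iint|c_i||u_2||X_iu_2|\le CR\iint|Xu_2|^2$ genuinely requires $CR<1$. Appealing to Proposition \ref{pro1} does not help, because its proof also only treats $CR\le\varepsilon$.

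What is missing is a crude bound valid for all $R\le1$. Test \eqref{356} with $e^{-2\lambda t}u_2$. Because $u_2(0,\cdot)=0$, the time term is at least $\lambda\iint e^{-2\lambda t}u_2^2$. Bound $|c_i||u_2||X_iu_2|\le\frac14|Xu_2|^2+Cu_2^2$ and $|f-(f)_{Q_R}||u_2|\le\frac{\lambda}{4}u_2^2+\frac1\lambda|f-(f)_{Q_R}|^2$, and take $\lambda$ large. This gives
\[
\iint_{Q^0_R}e^{-2\lambda t}|Xu_2|^2\,dxdt\le\frac{C}{\lambda}\iint_{Q^0_R}|f-(f)_{Q_R}|^2\,dxdt .
\]
Since $e^{-2\lambda t}\ge e^{-2\lambda}$ for $t\le R^2\le1$, you get $\iint_{Q^0_R}|Xu_2|^2\le C'\iint_{Q^0_R}|f-(f)_{Q_R}|^2$. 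For $R\ge R_1$ this is at most $C'R_1^{-2}R^2\iint_{Q^0_R}|f-(f)_{Q_R}|^2$. Only with this bound in hand is ``adjusting the constant'' legitimate.

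There is also a smaller point in your zero-extension Poincar\'e inequality. The doubling property bounds $|B_{2R}|/|B_R|$ from above, which is the wrong direction. Writing $v$ for the zero extension of $u_2(t,\cdot)$, you have $\int_{B_{KR}}|v-(v)_{B_{KR}}|^2=\int_{B_R}u_2^2-\big(\int_{B_R}u_2\big)^2/|B_{KR}|\ge\big(1-|B_R|/|B_{KR}|\big)\int_{B_R}u_2^2$. So you need $|B_R|\le\theta|B_{KR}|$ with some $\theta<1$. From \eqref{2.8} this holds with $\theta=\frac12$ once $K^Q\ge2C^2$, which need not be true for $K=2$. Either extend by zero to $B_{KR}$ for such a $K$ (with $KR\le R_0$), or derive reverse doubling from doubling together with a point at distance $\frac32R$ from $x_0$.
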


Then we give some useful estimates for $u$ as follows.
\begin{Prop}\label{729}
Let $u$ be a  solution of the equation \eqref{linear2} on $Q_{R}$. Then for all $(t_0,x_0)\in[0,T]\times\mathbb{R}^d$, all  $0<\rho\leq R<1$ such that $Q_{R}(t_0,x_0)\subset [0,T]\times\Omega$, there exists a positive constant $C$ such that
\begin{align}\label{1105}
\begin{split}
&\quad\iint_{Q_\rho}|X^2u-(X^2u )_{Q_{\rho}}|^2dxdt\\
&\leq C\left(\frac{\rho}{R}\right)^{Q+2+2\alpha}\iint_{Q_{R/2}}|X^2 u-(X^2u)_{Q_{R/2}}|^2dxdt
+\rho^{Q+2+2\alpha}[f]^2_{C^{0,\alpha}_{\mathcal {X}} (Q_{3R/4})}.
\end{split}
\end{align}
\end{Prop}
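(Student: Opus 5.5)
The plan is the standard Campanato perturbation argument, built on the splitting $u=u_1+u_2$ of \eqref{355}--\eqref{356} taken on $Q_{R/2}$ rather than $Q_R$. Here $u_1$ solves $\mathcal H_0u_1=(f)_{Q_{R/2}}$ with boundary data $u$, and $u_2$ solves $\mathcal H_0u_2=f-(f)_{Q_{R/2}}$ with zero parabolic boundary data. For any $\rho\le R/2$ I will start from
\begin{equation*}
\iint_{Q_\rho}|X^2u-(X^2u)_{Q_\rho}|^2\le 2\iint_{Q_\rho}|X^2u_1-(X^2u_1)_{Q_{R/2}}|^2+4\iint_{Q_\rho}|X^2u_2|^2 ,
\end{equation*}
using that the mean minimizes the $L^2$ deviation. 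The case $R/2<\rho\le R$ is trivial after enlarging $C$, since $Q_\rho$ is then comparable to $Q_{R/2}$.

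For the $u_1$ term, I first subtract the polynomial $\tilde h=(f)_{Q_{R/2}}\,t$. Then $\tilde h$ is independent of $x$, and $v=u_1-\tilde h$ solves the homogeneous equation, with $X^2v=X^2u_1$. Proposition \ref{X2uuR}, applied on $Q_{R/2}$, then gives
\begin{equation*}
\iint_{Q_\rho}|X^2u_1-(X^2u_1)_{Q_{R/2}}|^2\le C\Big(\frac{\rho}{R}\Big)^{Q+4}\iint_{Q_{R/2}}|X^2u_1-(X^2u_1)_{Q_{R/2}}|^2 .
\end{equation*}
Next I write $u_1=u-u_2$ in the right-hand side, which bounds it by
\begin{equation*}
C\Big(\frac{\rho}{R}\Big)^{Q+4}\Big(\iint_{Q_{R/2}}|X^2u-(X^2u)_{Q_{R/2}}|^2+\iint_{Q_{R/2}}|X^2u_2|^2\Big).
\end{equation*}
Since $Q+4>Q+2+2\alpha$ and $\rho/R\le1$, the factor $(\rho/R)^{Q+4}$ can be weakened to $(\rho/R)^{Q+2+2\alpha}$.

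For the $u_2$ term, I need an $L^2$ bound on $X^2u_2$ over $Q_{R/2}$. Together with $\iint_{Q_\rho}|X^2u_2|^2\le\iint_{Q_{R/2}}|X^2u_2|^2$, this will be of size $R^{Q+2+2\alpha}[f]^2$. The route is a global $H^2_{\mathcal X}$ estimate for the zero-boundary problem \eqref{356}. I will combine the energy estimate of Proposition \ref{pro1}, which gives $\iint|Xu_2|^2\le CR^2\iint|f-(f)|^2$, with the maximal $L^2$ regularity of $\mathcal H_0$ (Theorem 18 of \cite{RS77}) applied to a cutoff of $u_2$, as in the proof of \eqref{Ca20}. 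Working on a slightly larger cylinder, this yields
\begin{equation*}
\iint|X^2u_2|^2\le C\iint_{Q_{3R/4}}|f-(f)|^2\le CR^{Q+2}R^{2\alpha}[f]^2_{C^{0,\alpha}_{\mathcal X}(Q_{3R/4})},
\end{equation*}
where the last step uses $|f-(f)|\le CR^\alpha[f]$ on the cylinder and $|Q_{3R/4}|\sim R^{Q+2}$ from \eqref{2.8}. I will also need $R^{Q+2+2\alpha}\le(\rho/R)^{-(Q+2+2\alpha)}\rho^{Q+2+2\alpha}$. That conversion is bad, so the estimate cannot be closed at fixed $\rho,R$ alone.

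The closing step is therefore Lemma \ref{A4}. Set $\phi(r)=\iint_{Q_r}|X^2u-(X^2u)_{Q_r}|^2$; it is essentially nondecreasing up to constants. The two steps above give
\begin{equation*}
\phi(\rho)\le C\Big(\frac{\rho}{R}\Big)^{Q+4}\phi(R/2)+CR^{Q+2+2\alpha}[f]^2 .
\end{equation*}
The iteration lemma with exponents $Q+4>\beta=Q+2+2\alpha$ and $\varepsilon=0$ then converts this into \eqref{1105}, with $\rho^{Q+2+2\alpha}$ multiplying $[f]^2$ and the factor $(\rho/R)^{Q+2+2\alpha}$ on $\phi(R/2)$. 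I expect the main obstacle to be the $X^2u_2$ bound: Proposition \ref{pro1} controls only $Xu_2$, and the boundary term on the lateral side of $Q_{R/2}$ forces the cutoff and maximal-regularity argument to be carried out carefully, with the geometric loss absorbed by passing from $Q_{R/2}$ to $Q_{3R/4}$.
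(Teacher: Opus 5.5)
Your architecture is the paper's: split $u=u_1+u_2$, subtract $t(f)$ so that $u_1$ solves the homogeneous equation, get $(\rho/R)^{Q+4}$ decay from Proposition \ref{X2uuR}, write $u_1=u-u_2$, control $u_2$ by the energy estimate, and close with Lemma \ref{A4}. There is, however, a genuine gap in the $u_2$ step, and it comes from splitting on $Q_{R/2}$. With that choice $u_2$ lives only on $Q_{R/2}$, with zero data on $\partial_pQ_{R/2}$. Yet you need $\iint_{Q_{R/2}}|X^2u_2|^2$ over its \emph{entire} domain, in particular when you substitute $u_1=u-u_2$ into the right-hand side of Proposition \ref{X2uuR} on $Q_{R/2}$. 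That is an $H^2_{\mathcal X}$ estimate up to the lateral boundary of a CC-ball for a Dirichlet problem. Nothing in the paper supplies it, and it is not available for general H\"ormander operators: CC-balls typically have characteristic boundary points, where regularity up to the boundary can break down. The tools you invoke do not give it either. Theorem 18 of \cite{RS77} and the cutoff argument behind \eqref{Ca20} act on $\xi\eta u_2$ with compactly supported cutoffs, so they only control $X^2u_2$ strictly inside the domain of $u_2$. ``Working on a slightly larger cylinder'' is also impossible, since $u_2$ is not defined on $Q_{3R/4}$, and its zero extension does not solve the equation across $\partial B_{R/2}$.

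The repair is to split on a strictly larger cylinder. Use $Q_{3R/4}$, which produces exactly the seminorm $[f]_{C^{0,\alpha}_{\mathcal X}(Q_{3R/4})}$ of the statement, or $Q_R$ as the paper does. Then $u_1-t(f)_{Q_{3R/4}}$ solves the homogeneous equation on $Q_{3R/4}\supset Q_{R/2}$, so Proposition \ref{X2uuR} on $Q_{R/2}$ is an interior statement. Combining a cutoff between $Q_{R/2}$ and $Q_{3R/4}$, the Poincar\'e inequality \eqref{p11} (legitimate because $u_2$ vanishes on the lateral boundary of $Q_{3R/4}$), and the energy bound \eqref{Xu2UR} gives
\begin{equation*}
\iint_{Q_{R/2}}|X^2u_2|^2dxdt\leq \frac{C}{R^4}\iint_{Q_{3R/4}}u_2^2dxdt+C\iint_{Q_{3R/4}}|f-(f)_{Q_{3R/4}}|^2dxdt\leq CR^{Q+2+2\alpha}[f]^2_{C^{0,\alpha}_{\mathcal X}(Q_{3R/4})}.
\end{equation*}
For the iteration, do the same at every scale $r\leq R/2$, splitting on $Q_{3r/2}\subset Q_{3R/4}$. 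Then $\phi(\rho)\leq C(\rho/r)^{Q+4}\phi(r)+Cr^{Q+2+2\alpha}[f]^2_{C^{0,\alpha}_{\mathcal X}(Q_{3R/4})}$ holds for all $\rho\leq r\leq R/2$, with the seminorm on a fixed set. Lemma \ref{A4} then yields \eqref{1105}; note that $\phi$ is genuinely nondecreasing on nested cylinders, not just up to constants. With this change your argument coincides with the paper's proof.
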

\begin{proof}
 Recall the decomposition  $u=u_1+u_{2}$, where $u_1$ and $u_2$ are solutions of the equations \eqref{355} and \eqref{356} on $Q_{R}$, respectively. For the case $0<\rho\leq\frac{R}{2}$, set $$v=u_1-t(f)_{Q_{R}},$$ then $v$ satisfies the linear equation \eqref{linear2} with $f\equiv0$, that is,
\begin{equation}\label{ee}
\left\{
  \begin{aligned}
    &\partial_t v-\Delta_{\mathcal {X}}v=0,& (t,x)\in Q_{R}, \\
    &  v(0,x)=0,& x\in B_R.
    \end{aligned}
\right.
\end{equation}
It is easy to show  that for any $\ k\in\mathbb{N}_+$,
$X^ku_1=X^kv$.

For the estimate for $u_1$, Proposition \ref{X2uuR} gives that
\begin{align}\label{81}
\begin{split}
\iint_{Q_\rho}|X^2u_ 1- {(X^2u_ 1)}_{Q_{R}}|^2dxdt&=\iint_{Q_\rho}|X^2v- {(X^2v)}_{Q_{R}}|^2dxdt\\
&\leq C\left(\frac{\rho}{R}\right)^{Q+4}\iint_{Q_{R}}|X^2v- {(X^2v)}_{Q_{R}}|^2dxdt\\
&= C\left(\frac{\rho}{R}\right)^{Q+4}\iint_{Q_{R}}|X^2u_1- {(X^2u_ 1)}_{Q_{R}}|^2dxdt.
\end{split}
\end{align}

For the estimate for $u_2$, by replacing $f$ with $f-(f)_{Q_{R}}$ in the inequality \eqref{Ca20}, then for any $0<\rho<\frac{R}{2}$, we have
\begin{align}\label{371}
\begin{split}
\iint_{Q_\rho}|X^2u_{2}-(X^2u_{2})_{Q_\rho}|^2dxdt&\leq \iint_{Q_{R/2}}|X^2u_{2}|^2dxdt\\
&\leq \frac{C}{R^4}\iint_{Q_{R}}u_{2}^2dxdt+C\iint_{Q_{R}}|f-(f)_{Q_{R}}|^2dxdt.
\end{split}
\end{align}
Since  $u_{2}\big|_{\partial_p Q_{R}}=0$, using the Poincar\'{e} inequality \eqref{p11}  for $u_2$, we have
\begin{equation}\label{2121}
    \iint_{Q_{R}}u_{2}^2dx\leq CR^2\iint_{Q_R}|Xu_2|^2dx.
\end{equation}
It follows from inequalities \eqref{371}, \eqref{2121} and \eqref{Xu2UR} that
\begin{align}\label{xue283}
\begin{split}
\iint_{Q_\rho}|X^2u_{2}-(X^2u_{2})_{Q_\rho}|^2dxdt
&\leq \frac{C}{R^2}\iint_{Q_{R}}|Xu_{2}|^2dxdt+C\iint_{Q_{R}}|f-(f)_{Q_{R}}|^2dxdt\\
&\leq  C\iint_{Q_{R}}|f-(f)_{Q_{R}}|^2dxdt\\
&\leq CR^{Q+2+2\alpha}[f]^2_{C^{0,\alpha}_{\mathcal {X}} (Q_{R})},
\end{split}
\end{align}
the last inequality is provided by Lemma \ref{A3}.

\begin{align*}
&\quad\iint_{Q_\rho}|X^2u-(X^2u)_{Q_\rho}|^2dxdt\\
&\leq 2\iint_{Q_\rho}|X^2u_1-(X^2u_{1})_{Q_\rho}|^2dxdt+2\iint_{Q_\rho}|X^2u_{2}-(X^2u_{2})_{Q_\rho}|^2dxdt\\
&\leq C\left(\frac{\rho}{R}\right)^{Q+4}\iint_{Q_{R/2}}|X^2u_1-(X^2u_{1})_{Q_{R/2}}|^2dxdt
+C\iint_{Q_{R/2}}|X^2u_{2}-(X^2u_{2})_{Q_{R/2}}|^2dxdt\\
&\leq C\left(\frac{\rho}{R}\right)^{Q+4}\iint_{Q_{R/2}}|X^2u-(X^2u)_{Q_{R/2}}|^2dxdt
+C\iint_{Q_{R/2}}|X^2u_{2}-(X^2u_{2})_{Q_{R/2}}|^2dxdt\\
  &\leq C\left(\frac{\rho}{R}\right)^{Q+4}\iint_{Q_{R/2}}|X^2u-(X^2u)_{Q_{R/2}}|^2dxdt+ CR^{Q+2+2\alpha}[f]^2_{C^{0,\alpha}_{\mathcal {X}} (Q_{3R/4})}.
\end{align*}
Hence by using  Lemma \ref{A4}, the estimate \eqref{1105} holds.
For the case $\frac{R}{2}<\rho\leq R$, just taking $C=2^{Q+2+2\alpha}$, then the estimate \eqref{1105} is valid.
\end{proof}
\begin{Prop}\label{7210}
Let $u$ be a  solution of the equation \eqref{linear2} $Q_{R}$. Then for all $(t_0,x_0)\in[0,T]\times\mathbb{R}^d$, all  $0<\rho\leq \frac{R}{2}<R<1$ such that $Q_{R}(t_0,x_0)\subset [0,T]\times\Omega$, there exists a positive constant $C$ such that
\begin{equation}\label{bound1}
  [X^2 u]_{C^{0,\alpha}_{\mathcal {X}}(Q_{\rho})}
   \leq C\Big(\frac{1}{R^{2+\alpha}}\|u\|_{L^{\infty}({Q_R})}
  +\frac{1}{R^{\alpha}}\|f\|_{L^{\infty}({Q_R})}
  +[f]_{C^{0,\alpha}_{\mathcal {X}} ({Q_R})}\Big).
\end{equation}
\end{Prop}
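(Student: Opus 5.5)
The plan is to convert the Campanato decay of Proposition \ref{729} into a H\"older bound through Lemma \ref{A3}, and then to control the leftover $L^2$ mass of $X^2u$ by the Caccioppoli-type inequality \eqref{Ca20}.

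First, fix $(s,y)\in Q_\rho$. Since $\rho\le R/2$, the parabolic cylinder $Q_{R/2}(s,y)$ (shrunk to $Q_{R/4}(s,y)$ if needed) lies inside $Q_R(t_0,x_0)$, so Proposition \ref{729} applies with this centre. For every $r\le R/4$ it gives
\begin{equation*}
\iint_{Q_r(s,y)}|X^2u-(X^2u)_{Q_r}|^2\,dxdt\leq C\Big(\frac{r}{R}\Big)^{Q+2+2\alpha}\iint_{Q_{R/2}}|X^2u-(X^2u)_{Q_{R/2}}|^2\,dxdt+Cr^{Q+2+2\alpha}[f]^2_{C^{0,\alpha}_{\mathcal X}(Q_R)}.
\end{equation*}
For $R/4<r<d_0$ the same inequality holds trivially, with a constant depending only on $Q$, because $|Q_r\cap Q_\rho|\le|Q_R|$. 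Dividing by $r^{Q+2+2\alpha}$ and taking the supremum, the Campanato seminorm $[X^2u]_{2,Q+2+2\alpha,Q_\rho}$ is bounded by $CR^{-(Q+2+2\alpha)/2}\|X^2u\|_{L^2(Q_{R/2})}+C[f]_{C^{0,\alpha}_{\mathcal X}(Q_R)}$. Lemma \ref{A3}, applied in its scale-invariant form so that the constant does not depend on $R$, then yields $[X^2u]_{C^{0,\alpha}_{\mathcal X}(Q_\rho)}$ bounded by the same quantity.

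Second, the $L^2$ term is handled with \eqref{Ca20}:
\begin{equation*}
\iint_{Q_{R/2}}|X^2u|^2\,dxdt\le CR^{-4}\iint_{Q_R}u^2\,dxdt+C\iint_{Q_R}f^2\,dxdt\le CR^{Q+2}\big(R^{-4}\|u\|^2_{L^\infty(Q_R)}+\|f\|^2_{L^\infty(Q_R)}\big),
\end{equation*}
using $|Q_R|\simeq R^{Q+2}$ from \eqref{2.8}. Multiplying by $R^{-(Q+2+2\alpha)}$ and taking square roots gives exactly $R^{-2-\alpha}\|u\|_{L^\infty(Q_R)}+R^{-\alpha}\|f\|_{L^\infty(Q_R)}$, which produces \eqref{bound1}. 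The bottom-boundary version works the same way, with Propositions \ref{X2uuR0}, \ref{pro2} and \ref{0Ca20} in place of the interior results.

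The main obstacle is the use of Lemma \ref{A3} with constants that scale correctly in $R$ and that hold uniformly for cylinders $Q_r(s,y)$ sitting near the edge of $Q_\rho$. I would handle this by following the Campanato argument directly rather than citing the isomorphism as a black box. Compare the averages on dyadic cylinders $Q_{2^{-j}r}(s,y)$, telescope the differences to show that $X^2u(s,y)$ is the limit of these averages, and then bound $|X^2u(s,y)-X^2u(s',y')|$ by using a common cylinder of radius comparable to $d_P((s,y),(s',y'))$. The only geometric input needed is the doubling property \eqref{2.8}, so all constants stay independent of $R$.

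A secondary point is the centring. Proposition \ref{729} is stated with $(t_0,x_0)$ as centre, but its hypotheses only require the cylinder to lie in $[0,T]\times\Omega$, so recentring it at an arbitrary point of $Q_\rho$ is legitimate.
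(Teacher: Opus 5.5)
Your proposal is correct and follows essentially the same route as the paper: the Campanato decay from Proposition~\ref{729}, the Caccioppoli bound \eqref{Ca20} together with $|Q_R|\simeq R^{Q+2}$ to pass from $L^2$ to $L^\infty$ norms, and the Campanato--H\"older equivalence of Lemma~\ref{A3}; your recentring at points of $Q_\rho$ and your dyadic check that the Lemma~\ref{A3} constant is scale-invariant fill in details the paper's write-up glosses over. One bookkeeping point: once Proposition~\ref{729} is recentred at $(s,y)$ with radius $R/2$, the oscillation term on the right lives on $Q_{R/4}(s,y)$, which need not lie in $Q_{R/2}(t_0,x_0)$ when $\rho$ is close to $R/2$, so you should also apply \eqref{Ca20} centred at $(s,y)$; its right-hand side then sits in $Q_{R/2}(s,y)\subset Q_R(t_0,x_0)$.
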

\begin{proof}
It follows from \eqref{1105} and  Proposition \ref{Ca31} that for any $0<\rho<\frac{R}{2}$,
\begin{align*}
&\quad\iint_{Q_{\rho}}\big|X^2u-(X^2u)_{Q_\rho}\big|^2dxdt\\
  &\leq\rho^{Q+2+2\alpha}\Big( \frac{1}{R^{Q+2+2\alpha}}\iint_{Q_{R/2}}|X^2 u|^2dxdt+[f]^2_{C^{0,\alpha}_{\mathcal {X}} (Q_{3R/4})}\Big)\\
  &\leq C\rho^{Q+2+2\alpha}\Big(\frac{1}{R^{Q+6+2\alpha}}\iint_{Q_{R}}u^2dxdt
  +\frac{1}{R^{2\alpha}}\|f\|^2_{L^\infty(Q_{R})}+[f]^2_{C^{0,\alpha}_{\mathcal {X}} (Q_{R})}\Big).
\end{align*}
On the other hand, for any $(t,x)\in Q_R$, we have
\begin{align*}
   &\quad \iint_{Q_{R}(x)\cap Q_{R}}|X^2u(s,y)- {(X^2u)}_{Q_{R}(t,x) }|^2dyds\\
   &\leq\iint_{Q_{R}}|X^2u(s,y)- {(X^2u)}_{Q_{R}}|^2dyds\\
   &\leq C\rho^{Q+2+2\alpha}\Big(\frac{1}{R^{4+2\alpha}}\|u\|^2_{L^{\infty}(Q_{R}(x))}
  +\frac{1}{R^{2\alpha}}\|f\|^2_{L^{\infty}(Q_{R}(x))}
  +[f]^2_{C^{0,\alpha}_{\mathcal {X}} (Q_{R}(x))}\Big)\\
  &\leq C\rho^{Q+2+2\alpha}\Big(\frac{1}{R^{4+2\alpha}}\|u\|^2_{L^{\infty}(Q_{R})}
  +\frac{1}{R^{2\alpha}}\|f\|^2_{L^{\infty}(Q_{R})}
  +[f]^2_{C^{0,\alpha}_{\mathcal {X}} (Q_{R})}\Big),
\end{align*}
where ${(X^2u)}_{Q_{R}(t,x)}$ is the mean value of $X^2u$ on $Q_{R}(t,x) $.
Furthermore, it follows from Lemma \ref{A3} and the Jensen inequality that
\begin{align*}
   [X^2u]_{C^{0,\alpha}_{\mathcal {X}}(Q_{R})}
     &\leq C\rho^{Q+2+2\alpha}\Big(\frac{1}{R^{4+2\alpha}}\|u\|^2_{L^{\infty}(Q_{R})}
  +\frac{1}{R^{2\alpha}}\|f\|^2_{L^{\infty}(Q_{R})}
  +[f]^2_{C^{0,\alpha}_{\mathcal {X}} (Q_{R})}\Big)^{\frac{1}{2}}\\
  &\leq C\Big(\frac{1}{R^{2+\alpha}}\|u\|_{L^{\infty}(Q_{R})}
  +\frac{1}{R^{\alpha}}\|f\|_{L^{\infty}(Q_{R})}
  +[f]_{C^{0,\alpha}_{\mathcal {X}} (Q_{R})}\Big).
\end{align*}
\end{proof}
 By the same argument,  Proposition \ref{pro2} gives that the inequality \eqref{bound1} is also valid on $Q^0_R$ as follows.

\begin{Prop}\label{311}
Let $u$ be a  solution of the equation \eqref{linear2}. Then for all $(t_0,x_0)\in[0,T]\times\mathbb{R}^d)$, all  $0<\rho\leq \frac{R}{2}<R<1$ such that $Q^0_R(x_0)\subset [0,T]\times\Omega$, there exists a positive constant $C$ such that
\begin{equation*}
  [X^2 u]_{C^{0,\alpha}_{\mathcal {X}}(Q^0_{\rho})}
   \leq C\Big(\frac{1}{R^{2+\alpha}}\|u\|_{L^{\infty}({Q^0_R})}
  +\frac{1}{R^{\alpha}}\|f\|_{L^{\infty}({Q^0_R})}
  +[f]_{C^{0,\alpha}_{\mathcal {X}} ({Q^0_R})}\Big).
\end{equation*}
\end{Prop}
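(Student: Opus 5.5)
The plan is to mirror the interior argument of Proposition \ref{7210}, with the bottom-boundary ingredients replacing the interior ones at each step. The first step is to decompose $u=u_1+u_2$ on $Q^0_R$. Here $u_1$ solves $\partial_t u_1-\Delta_{\mathcal X}u_1=(f)_{Q^0_R}$ with $u_1=u$ on $\partial_pQ^0_R$. The function $u_2$ solves the equation with right-hand side $f-(f)_{Q^0_R}$ and vanishes on $\partial_pQ^0_R$. Both pieces vanish at $t=0$ because $u(0,\cdot)=0$.

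\textbf{The $u_1$ piece.} Set $v=u_1-t(f)_{Q^0_R}$. Then $v$ solves \eqref{linear2} with $f\equiv0$, still vanishes at $t=0$, and satisfies $X^2v=X^2u_1$. Proposition \ref{X2uuR0} then gives the decay $\iint_{Q^0_\rho}|X^2u_1|^2\le C(\rho/R)^{Q+4}\iint_{Q^0_R}|X^2u_1|^2$. At the boundary no mean value needs subtracting, since $X^2v$ vanishes at $t=0$. Since $\iint_{Q^0_\rho}|X^2u_1-(X^2u_1)_{Q^0_\rho}|^2\le\iint_{Q^0_\rho}|X^2u_1|^2$, this controls the oscillation as well.

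\textbf{The $u_2$ piece.} I combine the bottom Caccioppoli estimate of Proposition \ref{0Ca20} (with $f$ replaced by $f-(f)_{Q^0_R}$) with the spatial Poincar\'e inequality \eqref{p11}, which applies because $u_2=0$ on the lateral boundary, and with Proposition \ref{pro2}. This gives
\[
\iint_{Q^0_{R/2}}|X^2u_2|^2\le C\iint_{Q^0_R}|f-(f)_{Q^0_R}|^2\le CR^{Q+2+2\alpha}[f]^2_{C^{0,\alpha}_{\mathcal X}(Q^0_R)}.
\]
The last inequality comes from Lemma \ref{A3}, or directly from the H\"older seminorm and $|Q^0_R|\sim R^{Q+2}$.

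\textbf{Iteration and Campanato characterization.} Adding the two pieces gives
\[
\phi(\rho)\le C(\rho/R)^{Q+4}\phi(R)+CR^{Q+2+2\alpha}\Big([f]^2_{C^{0,\alpha}_{\mathcal X}}+R^{-2\alpha}\|f\|^2_{L^\infty}\Big),
\]
where $\phi(\rho)=\iint_{Q^0_\rho}|X^2u|^2$. The $\|f\|_{L^\infty}$ term accounts for the mean $(f)_{Q^0_R}$, which is not zero at the boundary. Lemma \ref{A4} with $\beta=Q+2+2\alpha<Q+4$ yields $\phi(\rho)\le C\rho^{Q+2+2\alpha}\big(R^{-(Q+2+2\alpha)}\phi(R/2)+\dots\big)$. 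I then bound $\phi(R/2)\le CR^{-4}\iint_{Q^0_R}u^2+C\iint f^2$ by Proposition \ref{0Ca20}, and use $\iint_{Q^0_R}u^2\le CR^{Q+2}\|u\|^2_{L^\infty}$ and $\iint f^2\le CR^{Q+2}\|f\|^2_{L^\infty}$. This gives a Campanato bound of order $\rho^{Q+2+2\alpha}$, with the constant on the right-hand side of the statement squared.

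\textbf{Main obstacle.} The step I expect to be hardest is this last one: extending the Campanato bound to centers $(t,x)\in Q^0_\rho$, not only bottom points. Near the bottom, a parabolic cylinder either lies in the interior region, where Proposition \ref{729} applies, or reaches $t=0$, where the bottom estimate applies after enlarging the radius by a fixed factor. I would split into these two cases according to whether the radius exceeds $\sqrt t$, in the standard way, and absorb the factors into $C$. Lemma \ref{A3} then converts the uniform Campanato bound into the $C^{0,\alpha}_{\mathcal X}$ seminorm estimate for $X^2u$ on $Q^0_\rho$, exactly as at the end of Proposition \ref{7210}.
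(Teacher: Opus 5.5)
Your proposal is correct and is essentially the paper's proof: the paper simply reruns the argument of Proposition~\ref{7210} with Propositions~\ref{0Ca20}, \ref{X2uuR0} and \ref{pro2} in place of their interior counterparts, and your explicit handling of cylinders centred at points with $t>0$ (Proposition~\ref{729} when the radius is below about $\sqrt{t}$, an enlarged bottom cylinder otherwise) only spells out what the paper leaves implicit.

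Two small repairs are needed in the iteration.

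\textbf{The spurious $\|f\|_{L^\infty}$ term.} Drop the $R^{-2\alpha}\|f\|^2_{L^\infty}$ term from your iteration inequality. It does not arise: the constant $(f)_{Q^0_R}$ is removed from $X^2u_1$ by passing to $v=u_1-t(f)_{Q^0_R}$, and $u_2$ only sees $f-(f)_{Q^0_R}$. It is also harmful if kept: read at the running radius it scales like $R^{Q+2}$, which blocks Lemma~\ref{A4} with $\beta=Q+2+2\alpha$. The $\|f\|_{L^\infty}$ term should enter only at the end, through the Caccioppoli bound on $\phi(R/2)$, as you in fact do there.

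\textbf{The radius for the $u_1$ decay.} Apply the $u_1$ decay between $\rho$ and $R/2$ rather than between $\rho$ and $R$. That way the $u_2$ correction is only needed on $Q^0_{R/2}$, where your energy and Caccioppoli bounds actually control it.
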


In the following, we show the periodicity of solutions to the Cauchy problem.
\begin{Lem}\label{periodic}
Let $\mathcal {X}=\{X_1,X_2,\ldots,X_q\}$ be smooth $\mathbb{Z}^d$-periodic vector fields defined on $\mathbb{R}^d$ satisfying H\"{o}rmander's condition.
For the following Cauchy problem
\begin{equation}\label{req}
\left\{
  \begin{aligned}
     & \mathcal {H}u(t,x)=f(t,x),&(t,x)\in (0,T]\times\mathbb{R}^d,  \\
    &u(0,x)=g(x)& x\in\mathbb{R}^d,
  \end{aligned}
\right.
\end{equation}
where $\mathcal {H}$ defined in \eqref{001}, with smooth bounded $\mathbb{Z}^d$-periodic coefficients. Let $b,\ c,\ f,$ and $g$  be smooth bounded and $\mathbb{Z}^d$-periodic in the spatial variable. Suppose the Cauchy problem \eqref{req} is well posed in a class containing the Feynman-Kac solution.
 Then the solution $u(t,x)$ on $\mathbb{R}^d$  is $\mathbb{Z}^d$-periodic with respect to  the  CC-distance, that is
\begin{equation}\label{uxy}
  u(t,x+y)=u(t,x),\ \text{for\ any\ } y\in\mathbb{Z}^d.
\end{equation} Furthermore, it descends uniquely to a solution on $[0,T]\times\mathbb{T}^d$.
\end{Lem}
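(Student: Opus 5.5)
The plan is to represent the solution of \eqref{req} by the Feynman--Kac formula and then read off periodicity from the periodicity of the data together with pathwise uniqueness of the underlying SDE. Since the operator $\partial_t-\Delta_{\mathcal X}$ is written as a sum of squares, I will first rewrite $\Delta_{\mathcal X}$ in It\^o form. By \eqref{case11}, $\Delta_{\mathcal X}u={\rm tr}(\sigma\sigma^T D^2u)+b\cdot Du$ with drift
\[
b^j(x)=\sum_{k=1}^q\sum_{i=1}^d\sigma^{ik}(x)\partial_{x_i}\sigma^{jk}(x).
\]
This $b$ is smooth, bounded and $\mathbb Z^d$-periodic, being built from the periodic $\sigma$. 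Consider the time-homogeneous SDE
\[
dZ_s=b(Z_s)\,ds+\sqrt2\,\sigma(Z_s)\,dB_s,\qquad Z_0=x .
\]
Its generator is ${\rm tr}(\sigma\sigma^TD^2)+b\cdot D=\Delta_{\mathcal X}$, up to the harmless factor convention, which I fix by choosing the diffusion constant so that the generator is exactly $\Delta_{\mathcal X}$. The coefficients are globally Lipschitz because they are smooth and periodic, so there is a unique strong solution $Z^x$ and pathwise uniqueness holds.

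The candidate solution is
\[
u(t,x)=\mathbf E\Big[g(Z^x_t)e^{-\int_0^t c(t-r,Z^x_r)dr}+\int_0^t f(t-s,Z^x_s)e^{-\int_0^s c(t-r,Z^x_r)dr}ds\Big].
\]
Because $g,f,c$ are bounded and smooth, It\^o's formula applied to $s\mapsto u(t-s,Z_s)$ times the exponential weight shows, in the standard way, that this function solves \eqref{req}. By the hypothesis that the problem is well posed in a class containing the Feynman--Kac solution, it is \emph{the} solution.

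Periodicity then comes from the following argument. Fix $y\in\mathbb Z^d$ and set $\tilde Z_s:=Z^{x+y}_s-y$. Since $b(\cdot+y)=b$ and $\sigma(\cdot+y)=\sigma$, the process $\tilde Z$ solves the same SDE driven by the same Brownian motion with initial point $x$. Pathwise uniqueness gives $\tilde Z=Z^x$ a.s., that is, $Z^{x+y}=Z^x+y$. Substituting into the representation and using the periodicity of $g,f,c$ in space yields $u(t,x+y)=u(t,x)$, which is \eqref{uxy}. The phrase ``with respect to the CC-distance'' causes no trouble: because the vector fields are periodic, translation by $y$ maps $\mathcal X$-subunit curves to $\mathcal X$-subunit curves. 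Hence $d_c(x+y,z+y)=d_c(x,z)$, and the intrinsic quantities of $u$ are translation invariant as well.

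For the descent to the torus, the function $u$ is $\mathbb Z^d$-periodic, so it defines a function $\bar u$ on $[0,T]\times\mathbb T^d$ via the quotient map. Because the operator $\mathcal H$ has periodic coefficients and is local, $\bar u$ solves the torus problem. For uniqueness, suppose $\bar v$ is any solution on the torus. Its periodic lift $v$ solves \eqref{req} and lies in the well-posedness class, since it is bounded and as smooth as the data. Hence $v=u$, and therefore $\bar v=\bar u$.

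The main obstacle I expect is the rigorous identification of the Feynman--Kac expression with a classical solution in the degenerate setting, where smoothness of $u$ is not automatic. I would handle this by invoking the assumed well-posedness directly rather than proving regularity. Failing that, I would use H\"ormander hypoellipticity to upgrade the viscosity or distributional solution to a smooth one.
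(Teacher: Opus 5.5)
Your proposal is correct and follows essentially the same route as the paper: a Feynman--Kac representation, then pathwise uniqueness for an SDE with Lipschitz, $\mathbb{Z}^d$-periodic coefficients, which gives $Z^{x+y}=Z^x+y$, and then the periodicity of $c,f,g$. Your version is slightly more careful in two respects: your drift $b^j=\sum_k X_k\sigma^{jk}$ with noise $\sqrt2\,\sigma$ yields exactly the generator $\Delta_{\mathcal X}$ (the paper's \eqref{kac} uses a different drift and normalization, which is harmless since only periodicity and Lipschitz continuity of the coefficients matter), and you actually argue uniqueness of the descended torus solution, which the paper only asserts.
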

\begin{proof}
By the Feynman-Kac formula, the probabilistic solution $u(t,x)$ to the equation \eqref{req} is known as
\begin{align}\label{eqFK}
\begin{split}
        u(t,x)
        =\mathbf{E}\Big[
        e^{-\int_0^t c(t-r,Z_r^{x})d r}
          g(Z_t^{x})+\int_0^t
          e^{-\int_0^\tau c(t-r,Z_r^{x})d r}
          f(t-\tau,Z_\tau^{x})d \tau       \Big],
          \end{split}
\end{align}
where $Z_\tau$ is the solution to the following It\^{o} equation
\begin{equation}\label{kac}
dZ^{x}_\tau=-\sum\limits_{k=1}^{q}\sigma^{ik}(Z^{x}_\tau)b_k(Z^{x}_\tau)
-\sum_{k=1}^{q}\sigma^{ik}(Z^{x}_\tau)c_kd\tau
+\sigma(Z^{x}_\tau)dB_\tau,
\end{equation}
starting from $x$ at initial time $t=0$, $c_k=-\sum^{d}_{i=1}\partial_{x_i}\sigma^{ik}(x)$, more details  refer to \cite{SV}.

Note that the diffusion coefficient matrix $\sigma$ is derived by the SDE \eqref{SDE} defined on $[0,T]\times\mathbb{T}^d$ via periodic extension. It is clear that the coefficients $\sigma$ and $b_k$ are Lipschitz continuous on $\mathbb{R}^d$. Furthermore, the Lipschitz continuity of $c_k$ is provided by the smoothness of $\sigma$. Hence we obtain the pathwise uniqueness of strong solutions to the It\^{o} equation \eqref{kac}, it implies that
\begin{equation}\label{y}
        Z_\tau^{x+y}=Z_\tau^{x}+y,
        \quad \text{for\ any\ }0\le \tau\le t,\ y\in\mathbb{Z}^d.
\end{equation}

By using \eqref{eqFK} at the point $x+y$, then we have
$$        u(t,x+y)
        =\mathbf{E}\Big[
        e^{-\int_0^t c(t-r,Z_r^{x+y})d r}
          g(Z_t^{x+y})+\int_0^t
          e^{-\int_0^\tau c(t-r,Z_r^{x+y})d r}
          f(t-\tau,Z_\tau^{x+y})d \tau        \Big].$$
By the translation invariance \eqref{y}, we have
\begin{align*}
u(t,x+y)
        &=\mathbf{E}\Big[
        e^{-\int_0^t c(t-r,Z_r^{x}+y)d r}
          g(Z_t^{x}+y)+\int_0^t
          e^{-\int_0^\tau c(t-r,Z_r^{x}+y)d r}
          f(t-\tau,Z_\tau^{x}+y)d\tau       \Big]\\
          &=\mathbf{E}\Big[
        e^{-\int_0^t c(t-r,Z_r^{x})d r}
          g(Z_t^{x})+\int_0^t
          e^{-\int_0^\tau c(t-r,Z_r^{x})d r}
          f(t-\tau,Z_\tau^{x})d \tau        \Big]\\
          &=u(t,x),
\end{align*}
which is provided by the periodicity of $c,f,g$.
Consider that the solution of the equation \eqref{req} can be expressed by the Feynman-Kac solution \eqref{eqFK}, then $u(t,x)$ also solves the equation \eqref{req} on $[0,T]\times\mathbb{T}^d$.
\end{proof}
\begin{Rem}
Note that the periodicity of the Cauchy problem \eqref{req} cannot proven  via the PDE method, since there is no weak maximum principle results on general H\"{o}rmander operators. In other words, the uniqueness of solutions is not established in this setting. Here we use the Feynman-Kac formula to prove it, the Lipschitz condition guarantees the pathwise uniqueness of solutions.
\end{Rem}
\begin{Lem}\label{equal}
Let $\mathcal {X}=\{X_1,X_2,\ldots,X_q\}$ be smooth $\mathbb{Z}^d$-periodic vector fields defined on $\mathbb{R}^d$ satisfying H\"{o}rmander's condition. Then there exists a constant $C>0$ such that
$$C^{-1}\|u\|_{C^{2,\alpha}_{\mathcal {X}}([0,T]\times\mathbb{T}^d)}\leq\|u\|_{C^{2,\alpha}_{\mathcal {X}}([0,T]\times\mathbb{R}^d)}\leq C\|u\|_{C^{2,\alpha}_{\mathcal {X}}([0,T]\times\mathbb{T}^d)}.$$
\end{Lem}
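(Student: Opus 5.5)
The plan is to compare the two norms term by term. By Definition \ref{2.06}, both norms are sums over $|I|+2j\le 2$ of $\|\partial_t^jX^Iu\|_{C^{0,\alpha}_{\mathcal X}}$. Here $u$ is understood as a $\mathbb Z^d$-periodic function on $[0,T]\times\mathbb R^d$, as provided by Lemma \ref{periodic}, and it is identified with its projection to $[0,T]\times\mathbb T^d$. Since the vector fields $X_k$ have $\mathbb Z^d$-periodic coefficients, every derivative $v=\partial_t^jX^Iu$ is again $\mathbb Z^d$-periodic. It therefore suffices to prove, for a single periodic function $v$, that
\[
\|v\|_{L^\infty([0,T]\times\mathbb T^d)}=\|v\|_{L^\infty([0,T]\times\mathbb R^d)},\qquad [v]_{C^{0,\alpha}_{\mathcal X}([0,T]\times\mathbb T^d)}=[v]_{C^{0,\alpha}_{\mathcal X}([0,T]\times\mathbb R^d)}.
\]
This would even give the statement with $C=1$; any constant enters only if the two CC-distances turn out to be merely comparable rather than equal. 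The equality of the $L^\infty$ norms is immediate from periodicity. By \eqref{929}, the time increments $|t-s|$ are the same in both settings, so all the work is in the spatial CC-distance.

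First I will establish the translation invariance of $d_c$ on $\mathbb R^d$, which is the claim \eqref{dk}: $d_c(x+k,y+k)=d_c(x,y)$ for every $k\in\mathbb Z^d$. If $\gamma$ is $\mathcal X$-subunit from $x$ to $y$ with controls $\lambda_j$, then $\gamma+k$ satisfies $(\gamma+k)'=\sum_j\lambda_jX_j(\gamma)=\sum_j\lambda_jX_j(\gamma+k)$ by periodicity. Hence $\gamma+k$ is subunit with the same length, and taking infima in both directions gives equality.

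Next I will identify the CC-distance $d_{\mathbb T}$ on $\mathbb T^d$ induced by the projected fields with the quotient distance, namely $d_{\mathbb T}(\pi x,\pi y)=\min_{k\in\mathbb Z^d}d_c(x,y+k)$. In one direction, every subunit curve on $\mathbb R^d$ projects to a subunit curve on $\mathbb T^d$. In the other direction, every absolutely continuous subunit curve on $\mathbb T^d$ lifts uniquely, once a starting point $x$ is fixed, to a curve on $\mathbb R^d$ with the same controls and hence the same length, ending at some $y+k$. The infimum over $k$ is attained. Indeed, by \eqref{2.7} and the translation invariance, $d_c(x,y+k)\ge c|k|-C$ for large $|k|$, so only finitely many $k$ compete.

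With these two facts the seminorm comparison is straightforward. Since $d_{\mathbb T}(\pi x,\pi y)\le d_c(x,y)$, the torus difference quotients dominate the $\mathbb R^d$ ones. This gives $[v]_{\mathbb R^d}\le[v]_{\mathbb T^d}$. For the reverse inequality, given torus points I choose lifts $x$ and $y+k$ realizing $d_{\mathbb T}$. Then
\[
|v(t,x)-v(s,y)|=|v(t,x)-v(s,y+k)|\le[v]_{\mathbb R^d}\,d_P\big((t,x),(s,y+k)\big)^\alpha,
\]
and the last factor equals the torus parabolic distance.

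The main obstacle I expect is the rigorous lifting step: the torus CC-distance must be exactly the quotient of $d_c$, and the minimum over $k$ must be attained. Both rely on the fact that the large-scale growth of $d_c$ is controlled. The local bound \eqref{2.7} holds only on small compact sets, so to get $d_c(x,y+k)\to\infty$ I will chain it along a fundamental domain, using the translation invariance from the first step. If exact equality of the two distances proves delicate, I will fall back on the comparability $C^{-1}d_{\mathbb T}\le\min_k d_c\le Cd_{\mathbb T}$, which still yields the lemma with some constant $C$.
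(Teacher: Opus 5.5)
Your argument is correct. It reaches a sharper conclusion than the paper (equality of the norms, i.e.\ $C=1$) by a partly different route.

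\textbf{Shared ingredients.} You use the periodicity of $u$ from Lemma \ref{periodic}, the translation invariance \eqref{dk}, the quotient formula \eqref{def01}, and the infimum-over-lifts argument for $[v]_{C^{0,\alpha}_{\mathcal X}([0,T]\times\mathbb T^d)}\le[v]_{C^{0,\alpha}_{\mathcal X}([0,T]\times\mathbb R^d)}$. That last inequality is the direction the paper declares clear.

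\textbf{Where you differ.} For the opposite direction the paper splits pairs of points into two kinds. For far pairs, $d_P>\epsilon_0$, it bounds the difference quotients by a multiple of $\epsilon_0^{-\alpha}\|u\|_{L^\infty}$. For near pairs it shows by contradiction, using $d_c(y,y+k_0)\ge C^{-1}|k_0|$, that $d_{c,\mathbb T^d}(x,y)=d_c(x,y)$ whenever $d_c(x,y)\le\epsilon_0$. You use only the trivial bound $d_{\mathbb T}\le d_c$ (take $k=0$), so torus difference quotients dominate the $\mathbb R^d$ ones pointwise, and the near/far split becomes unnecessary.

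\textbf{What each route buys.} The paper's detour additionally shows that the two distances coincide at small scales, so small torus CC-balls are isometric copies of CC-balls in $\mathbb R^d$. That is useful in its own right but not needed for this lemma. Your curve-lifting step, in turn, proves something the paper simply takes as a definition: the intrinsic CC-distance of the projected fields on $\mathbb T^d$ equals the quotient distance.

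Two small points.

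First, you do not need the minimum over $k$ to be attained. The bound $|v(t,x)-v(s,y)|\le[v]_{C^{0,\alpha}_{\mathcal X}([0,T]\times\mathbb R^d)}\,d_P\big((t,x),(s,y+k)\big)^\alpha$ holds for every $k$, so you can take the infimum on the right. If you want attainment anyway, there is no need to chain \eqref{2.7}. The fields are smooth and periodic, hence $|X_j|\le M$. Every $\mathcal X$-subunit curve then has Euclidean speed at most $\sqrt q\,M$, which gives $|x-y|\le\sqrt q\,M\,d_c(x,y)$ on all of $\mathbb R^d$.

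Second, handle the pairs with $t=s$ and $y=x+k$, $k\neq0$. These are distinct in $\mathbb R^d$ but coincide on the torus, and their difference quotient vanishes by periodicity.
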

\begin{proof}
First, we prove  that
\begin{equation}\label{uuu}
\|u\|_{C^{2,\alpha}_{\mathcal {X}}([0,T]\times\mathbb{T}^d)}\leq C\|u\|_{C^{2,\alpha}_{\mathcal {X}}([0,T]\times\mathbb{R}^d)}.
\end{equation}
Denote by $d_{c,\mathbb{T}^d}(x,y):=\inf_{k,l\in\mathbb{Z}^d}d_{c}(x+l,y+k)$ the corresponding $CC$-distance defined on $\mathbb{T}^d$. Then we claim that
\begin{equation}\label{dk}
d_{c}(x,y)=d_{c}(x+k,y+k),\  \text{for\ any\ }k\in\mathbb{Z}^d.
\end{equation}
In fact, if $\gamma$ is an admissible horizontal curve joining $x$ to $y$ with control $\lambda$ satisfying \eqref{gamma}. Set $\gamma_k(t)=\gamma(t)+k$, due to the periodicity of the vector fields, then we have
$$\gamma'_k(t)=\gamma'(t)=\sum^{q}_{j=1}\lambda_j(t)X_j(\gamma(t))=\sum^{q}_{j=1}\lambda_j(t)X_j(\gamma(t)+k)=\sum^{q}_{j=1}\lambda_j(t)X_j(\gamma_k(t)).$$
Thus the curve $\gamma_k(t)$ joins $x+k$ to $y+k$, which implies that the claim \eqref{dk} holds.
Furthermore, we have
\begin{equation}\label{def01}
d_{c,\mathbb{T}^d}(x,y)=\inf_{k,l\in\mathbb{Z}^d}d_{c}(x+l,y+k)=\inf_{k,l\in\mathbb{Z}^d}d_{c}(x,y+k-l)=\inf_{k\in\mathbb{Z}^d}d_{c}(x,y+k).
\end{equation}
Combining  \eqref{8.05} with \eqref{def01},  
it is clearly that the inequality \eqref{uuu} holds.

On the other hand, recall that $\|u\|_{C^{0,\alpha}_{\mathcal {X}}(\Omega'_T)}=[u]_{C^{0,\alpha}_{\mathcal {X}}(\Omega'_T)}+\|u\|_{L^{\infty}(\Omega'_T)}$ for any compact set $\Omega'_T\subseteq[0,T]\times\mathbb{R}^d$. If $d_{P}(x,y)>\epsilon_0$ for any $x,y\in\mathbb{R}^d$ and some $\epsilon_0>0$ small enough chosen below, by the periodicity of $u(t,x)$ given by Lemma \ref{periodic}, then we have
\begin{align*}
  [u]_{C^{0,\alpha}_{\mathcal {X}}(\Omega'_T)} & =\sup_{(t,x)\neq (s,y)}\frac{|u(t,x)-u(s,y)|}{\big(|t-s|+d^2_{c}(x,y)\big)^\frac{\alpha}{2}} \leq 2\epsilon^\alpha_0\|u\|_{L^\infty([0,T]\times\mathbb{R}^d)}\leq C\|u\|_{L^\infty([0,T]\times\mathbb{T}^d)},
\end{align*}
which implies that
\begin{equation}\label{part1}
\|u\|_{C^{2,\alpha}_{\mathcal {X}}([0,T]\times\mathbb{R}^d)}\leq C\|u\|_{L^\infty([0,T]\times\mathbb{T}^d)}.
\end{equation}

 If $d_{P}(x,y)\leq\epsilon_0$ for any $x,y\in\mathbb{R}^d$, then we have $d_{c}(x,y)\leq\epsilon_0$. Now we claim that
\begin{equation}\label{dc}
d_{c,\mathbb{T}^d}(x,y)=d_c(x,y).
\end{equation}
In fact, we proceed by contradiction. Assume that there exists an integer $k_0\neq0$ such that $$d_{c,\mathbb{T}^d}(x,y)=d_c(x,y+k_0)<d_c(x,y)\leq\epsilon_0.$$
Then the triangle inequality yields
\begin{align*}
  d_c(x,y+k_0) & \geq|d_c(y,y+k_0)-d_c(x,y)| =d_c(y,y+k_0)-d_c(x,y),
\end{align*}
when $d_c(y,y+k_0)\geq\epsilon_0$. Since $d_c(y,y+k_0)\geq C^{-1}|k_0|$ given by \eqref{2.7}, then we have
$$\epsilon_0>d_c(x,y+k_0)\geq C^{-1}|k_0|-\epsilon_0.$$
Taking  $\epsilon_0$ small enough such that $\epsilon_0\leq\frac{ C^{-1}|k_0|}{2}$ yields a contradiction. Hence the claim \eqref{dc} is valid.
Moreover, it follows from \eqref{dc} and Lemma \ref{periodic} that
\begin{align*}
  [u]_{C^{0,\alpha}_{\mathcal {X}}([0,T]\times\mathbb{T}^d)} 
   &=\sup_{(t,x)\neq (s,y)}\frac{|u(t,x)-u(s,y)|}{\big(|t-s|+d^2_{c}(x,y)\big)^\frac{\alpha}{2}}=[u]_{C^{0,\alpha}_{\mathcal {X}}([0,T]\times\mathbb{R}^d)}.
\end{align*}
Combined with \eqref{part1}, we have
$$\|u\|_{C^{2,\alpha}_{\mathcal {X}}([0,T]\times\mathbb{R}^d)}\leq C\|u\|_{C^{2,\alpha}_{\mathcal {X}}([0,T]\times\mathbb{T}^d)}.$$
Hence, the proof is completed.
\end{proof}
Now we prove Proposition \ref{prop13} as follows.
\begin{proof}[{\bf{ Proof of Proposition \ref{prop13}}}]
The proof needs to combine the interior estimates with estimate near the bottom boundary  on $[0,T]\times\mathbb{R}^d$.

{\bf{Step 1.}}
If $\hat{u}$ (still denoted $u$) is a solution to the equation \eqref{linear2}, then for any $(t_0,x_0)\in[0,T]\times\Omega'$, $\Omega'\Subset\Omega\subseteq\mathbb{R}^d$,
we first prove that
\begin{equation}\label{aij}
 \|u\|_{C^{2,\alpha}_{\mathcal {X}} ([0,T]\times\Omega')}\leq C\left(\|u\|_{L^{\infty}([0,T]\times\Omega)}+\|f\|_{C^{0,\alpha}_{\mathcal {X}} ([0,T]\times\Omega)}\right),
\end{equation}
where the set $\Omega'$ at least one full period of $\mathbb{T}^d$, and $C$ is a constant depending on $\Omega,\Omega',\mathcal {X},\alpha$ and $C^{0,\alpha}_{\mathcal {X}}$ norms of the coefficients.

It follows from the interior estimates given by Proposition \ref{7210} that there exists $0<R_0<1$ such that $B_{R_0}\subset\Omega$ and for any $0<R\leq \frac{R_0}{2}$,

 \begin{align}\label{inter}
 \begin{split}
   [X^2u]_{C^{0,\alpha}_{\mathcal {X}}(Q_{R})}
     &\leq [X^2u]_{C^{0,\alpha}_{\mathcal {X}}(Q_{R_0/2})}\\
  &\leq C\Big(\frac{1}{R_0^{2+\alpha}}\|u\|_{L^{\infty}(Q_{R_0})}
  +\frac{1}{R_0^{\alpha}}\|f\|_{L^{\infty}(Q_{R_0})}
  +[f]_{C^{0,\alpha}_{\mathcal {X}} (Q_{R_0})}\Big)\\
   &\leq C\Big(\frac{1}{R_0^{2+\alpha}}\|u\|_{L^{\infty}([0,T]\times\Omega)}
  +\frac{1}{R_0^{\alpha}}\|f\|_{L^{\infty}([0,T]\times\Omega)}
  +[f]_{C^{0,\alpha}_{\mathcal {X}} ([0,T]\times\Omega)}\Big).
  \end{split}
\end{align}
The priori estimate  near the bottom boundary given by Proposition \ref{311} gives that
  \begin{align}\label{boundary}
 \begin{split}
   [X^2u]_{C^{0,\alpha}_{\mathcal {X}}(Q^0_{R})}
   &\leq C\Big(\frac{1}{R_0^{2+\alpha}}\|u\|_{L^{\infty}([0,T]\times\Omega)}
  +\frac{1}{R_0^{\alpha}}\|f\|_{L^{\infty}([0,T]\times\Omega)}
  +[f]_{C^{0,\alpha}_{\mathcal {X}} ([0,T]\times\Omega)}\Big).
  \end{split}
\end{align}

 By the finite converge theorem, there exists a sequence of balls $\big\{B_{R_1}(x_1),$ $B_{R_2}(x_2), \ldots, \\ B_{R_n}(x_n)\big\}$ covering the set $\Omega'$, here $\{x_j\}_{1\leq j\leq n}\in\Omega'$ and any ball $B_{2R_j}(x_j)\subset \Omega\subset\mathbb{R}^d$. Set
$$R_0=2\min \{R_1,R_2,\ldots,R_n\}.$$
Then for any $\Omega'\Subset\Omega$, there exists a sequence of points $\{t_1,t_2,\ldots,t_n\}$ such that
\begin{equation}\label{covering}
[0,T]\times\Omega'\subset \bigcup_{1\leq k\leq n}Q_{R_0}(t_k,x_k),
\end{equation}
where $Q_{R_0}(t_k,x_k)$ either is contained within $[0,T]\times\Omega$ or has its center at the points $\{(0,x_k)\}_{1\leq k\leq n}$. Moreover, for $1\leq j\leq n$, it follows from \eqref{inter} and \eqref{boundary} that
$$ [X^2u]_{C^{0,\alpha}_{\mathcal {X}}(Q_{R_0}(t_k,x_k))}\leq C\Big(\frac{1}{R_0^{2+\alpha}}\|u\|_{L^{\infty}([0,T]\times\Omega)}
  +\frac{1}{R_0^{\alpha}}\|f\|_{L^{\infty}([0,T]\times\Omega)}
  +[f]_{C^{0,\alpha}_{\mathcal {X}} ([0,T]\times\Omega)}\Big).$$

 For any $(t,x),\ (s,y)\in [0,T]\times\Omega'$, if $d_P((t,x),(s,y))\geq \frac{R_0}{2}$, then
\begin{equation}\label{801}
  \frac{|X^2u(t,x)-X^2u(s,y)|}{d^\alpha_P((t,x),(s,y))}
  \leq\frac{2}{R_0^\alpha}\|X^2u\|_{L^\infty( [0,T]\times\Omega')}.
\end{equation}

If $d_P((t,x),(s,y))< \frac{R_0}{2}$, then there exists $(\hat{t},\hat{x})\in\{(t_k,x_k)\}_{k\geq1}$ mentioned in \eqref{covering}, such that $(t,x),(s,y)\in Q_{3R_0/4}(\hat{t},\hat{x})\cap \{[0,T]\times\Omega\}$.
It follows from \eqref{inter}, \eqref{boundary} and Lemma \ref{A3} that
\begin{align}\label{U0}
\begin{split}
 &\quad\frac{|X^2u(t,x)-X^2u(s,y)|}{d^\alpha_P((t,x),(s,y))} \\ &\leq[X^2u]_{C^{0,\alpha}_{\mathcal {X}}\left(Q_{3R_0/4}(\hat{t},\hat{x})\right)}\\
  & \leq C\Big(\frac{1}{R_0^{2+\alpha}}\|u\|_{L^\infty([0,T]\times\Omega)}
  +\frac{1}{R_0^{\alpha}}\|f\|_{L^{\infty}([0,T]\times\Omega)}
  +[f]_{C^{0,\alpha}_{\mathcal {X}} ([0,T]\times\Omega)}\Big).
 \end{split}
\end{align}

Since $R_0<1$, combined \eqref{801}  with  \eqref{U0}, we have
\begin{align*}
 [X^2u\big]_{C^{0,\alpha}_{\mathcal {X}} ([0,T]\times\Omega')}
 &=\sup_{(t,x),(s,y)\in [0,T]\times\Omega'}\frac{|X^2u(t,x)-X^2u(s,y)|}{d^\alpha_P((t,x),(s,y))}\\
  & \leq C\Big(\frac{1}{R_0^\alpha}\|X^2u\|_{L^\infty([0,T]\times\Omega')}+\frac{1}{R_0^{2+\alpha}}\|u\|_{L^\infty([0,T]\times\Omega)}
 \\&\quad +\frac{1}{R_0^{\alpha}}\|f\|_{L^{\infty}([0,T]\times\Omega)}
  +[f]_{C^{0,\alpha}_{\mathcal {X}} ([0,T]\times\Omega)}\Big).
\end{align*}
 By a parabolic version of the interpolation inequality given in Corollary \ref{P}, we have
$$\|X^2u\|_{L^\infty([0,T]\times\Omega')}\leq \varepsilon\|X^2u\|_{C^{0,\alpha}_{\mathcal {X}} ([0,T]\times\Omega')}+C\|u\|_{L^{\infty}([0,T]\times\Omega')},$$
for any fixed $0<R_0<1$. If $\varepsilon$ is small enough, then the estimate \eqref{aij} is valid. 

{\bf{Step 2.}} If $u$ is a solution to the equation \eqref{peq}, then we prove the estimate \eqref{3.35}. Without loss of generality, we assume that $g(x)=0$ for any $x\in\mathbb{T}^d$. Indeed, we set $\tilde{f}=f+\mathcal {H}g$. Since $u(t,x)=u(t,x+k)$ given by Lemma \ref{periodic} and $d_c(x,y)=d_c(x+k,y+k)$ given by the claim \eqref{dk}, then for any $x,y\in\Omega'$, we have
\begin{align*}
[u]_{C^{0,\alpha}_{\mathcal {X}}([0,T]\times\{\Omega'+k\})}&=\sup_{(t,x)\neq (s,y)}\frac{|u(t,x+k)-u(s,y+k)|}{\big(|t-s|+d^2_{c}(x+k,y+k)\big)^\frac{\alpha}{2}}
=[u]_{C^{0,\alpha}_{\mathcal {X}}([0,T]\times\Omega')}.
\end{align*}
Since $\mathbb{R}^d=\bigcup_k\{\Omega'+k|k\in\mathbb{Z}^d\}$, by \eqref{aij}, hence we have
$$ \|u\|_{C^{2,\alpha}_{\mathcal {X}} ([0,T]\times\mathbb{R}^d)}\leq C\big(\|u\|_{L^{\infty}([0,T]\times\mathbb{R}^d)}+\|f\|_{C^{0,\alpha}_{\mathcal {X}} ([0,T]\times\mathbb{R}^d)}\big).$$
Due to the periodicity of solutions given by Lemma \ref{periodic}, then we get
\begin{equation}\label{td}
\|u\|_{C^{2,\alpha}_{\mathcal {X}}([0,T]\times\mathbb{T}^d)}\leq C\big(\|f\|_{C^{0,\alpha}_{\mathcal {X}}([0,T]\times\mathbb{T}^d)}+\|u\|_{L^{\infty}([0,T]\times\mathbb{T}^d)}\big).
\end{equation}

For the lower-order terms for the equation \eqref{peq}, by using the interpolation inequality again, since
$b_i,\ c\in C^{0,\alpha}_{\mathcal {X}}([0,T]\times\mathbb{T}^d)$, $i=1,\ldots,q$, we have
$$\|b_iX_iu\|_{C^{0,\alpha}_{\mathcal {X}} ([0,T]\times\mathbb{T}^d)}+\|cu\|_{C^{0,\alpha}_{\mathcal {X}} ([0,T]\times\mathbb{T}^d)}\leq \varepsilon\|u\|_{C^{2,\alpha}_{\mathcal {X}} ([0,T]\times\mathbb{T}^d)}+C\|u\|_{L^{\infty}([0,T]\times\mathbb{T}^d)}.$$
If $\varepsilon$ is small enough, then the estimate \eqref{td} gives that
\begin{align*}
 &\quad\|u\|_{C^{2,\alpha}_{\mathcal {X}} ([0,T]\times\mathbb{T}^d)} \\
 & \leq C\big(\|u\|_{L^{\infty}([0,T]\times\mathbb{T}^d)}+\|f\|_{C^{0,\alpha}_{\mathcal {X}} ([0,T]\times\mathbb{T}^d)}+\|b_iX_iu\|_{C^{0,\alpha}_{\mathcal {X}} ([0,T]\times\mathbb{T}^d)}+\|cu\|_{C^{0,\alpha}_{\mathcal {X}} ([0,T]\times\mathbb{T}^d)} \big)\\
  & \leq C\big(\|u\|_{L^{\infty}([0,T]\times\mathbb{T}^d)}+\|f\|_{C^{0,\alpha}_{\mathcal {X}} ([0,T]\times\mathbb{T}^d)}\big).
\end{align*}

For the general initial condition $g\neq0$, then we have
\begin{equation*}
\|u\|_{C^{2,\alpha}_{\mathcal {X}}([0,T]\times\mathbb{T}^d)}\leq C\big(\|u\|_{L^{\infty}([0,T]\times\mathbb{T}^d)}+\|f\|_{C^{0,\alpha}_{\mathcal {X}}([0,T]\times\mathbb{T}^d)}+\|g\|_{C^{2,\alpha}_{\mathcal {X}}(\mathbb{T}^d)}\big).
\end{equation*}
The proof is completed.
\end{proof}
\subsection{The proof of Theorem \ref{eu}}
\begin{proof}[{\bf{Proof of Theorem \ref{eu}}}]
The main idea of this proof is based on a blend of the Schauder estimates given in Lemma \ref{prop13},  modification method and the continuity method.


\noindent{\bf{Step\ 1.}} First, we show the existence and uniqueness of the solution for the equation
\begin{equation}\label{(3)}
\left\{
  \begin{aligned}
    &\partial_t \tilde{u}-\Delta_\mathcal {X}\tilde{u}=f,&(t,x)\in (0,T]\times\mathbb{T}^d, \\
    &\tilde{u}(0,x)=g(x),&x\in\mathbb{T}^d.
    \end{aligned}
\right.
\end{equation}
Consider the following approximate problem
\begin{equation}\label{apppro}
\left\{
  \begin{aligned}
    &\partial_t u_n-\Delta_\mathcal {X}u_n=f_n,&(t,x)\in (0,T]\times\mathbb{T}^d, \\
    &u_n(0,x)=g(x),&x\in\mathbb{T}^d,
    \end{aligned}
\right.
\end{equation}
where $f_n\in C^\infty([0,T]\times\mathbb{T}^d)$ and $f_n\to f$. By the H\"{o}rmander theorem, there exists a solution $u_n\in C^\infty_b([0,T]\times\mathbb{T}^d)$ for the equation \eqref{apppro}, which implies that $u_n\in C^{2,\alpha}_{\mathcal {X}}([0,T]\times\mathbb{T}^d)$. Applying Proposition \ref{prop13} to the equation \eqref{apppro}, then we have
\begin{align*}
\|u_n\|_{C^{2,\alpha}_{\mathcal {X}}([0,T]\times\mathbb{T}^d)}&\leq C\big(\|u_n\|_{L^{\infty}([0,T]\times\mathbb{T}^d)}+\|f_n\|_{C^{0,\alpha}_{\mathcal {X}}([0,T]\times\mathbb{T}^d)}+\|g(x)\|_{C^{2,\alpha}_{\mathcal {X}}(\mathbb{T}^d)}\big)\leq C.
\end{align*}
It implies that $\{u_n\}$ is a Cauchy sequence in $C^{2,\alpha}_{\mathcal {X}}([0,T]\times\mathbb{T}^d)$. Hence there exists a unique limit $\tilde{u}$ such that $\{u_n\}$ converges locally uniformly to $\tilde{u}$ as $n\to\infty$. Thus we conclude that $\tilde{u}\in C^{2,\alpha}_{\mathcal {X}}([0,T]\times\mathbb{T}^d)$, which is a solution to the equation \eqref{(3)}. Then applying Proposition \ref{prop13} again to the equation \eqref{(3)}, we have
\begin{equation}\label{xue221.6}
   \|\tilde{u}\|_{C^{2,\alpha}_{\mathcal {X}}([0,T]\times\mathbb{T}^d)} \leq C\big(\|f\|_{C^{0,\alpha}_{\mathcal {X}}([0,T]\times\mathbb{T}^d)}+\|g\|_{C^{2,\alpha}_{\mathcal {X}}(\mathbb{T}^d)}\big)\leq C.
\end{equation}
\noindent{\bf{Step\ 2.}} Next, we obtain the existence of the solution for the equation  \eqref{peq} by the continuity method. For any
$\theta\in[0,1]$, set
$$\mathcal {L}_\theta:=\theta\big(\Delta_\mathcal {X}+\sum^{q}_{i=1}b_iX_i+c\big)
+(1-\theta)\sum^{q}_{i=1}\Delta_\mathcal {X}=\Delta_\mathcal {X}+\theta\sum^{q}_{i=1}\big(b_iX_i+c\big).$$
Consider the following equation
\begin{equation}\label{xue3303}
\partial_tu_\theta(t,x)-\mathcal {L}_\theta u_\theta(t,x)=f(t,x).
\end{equation}
In fact, the operator $\mathcal {L}_\theta$ is a bound linear operator from  Banach space $C^{2,\alpha}_{\mathcal {X}}([0,T]\times\mathbb{T}^d)$ to the Banach space $C^{0,\alpha}_{\mathcal {X}}([0,T]\times\mathbb{T}^d)$.
For the equation \eqref{xue3303}, Proposition \ref{prop13} gives that
\begin{equation}\label{xue22126}
   \|u_\theta\|_{C^{2,\alpha}_{\mathcal {X}}([0,T]\times\mathbb{T}^d)} \leq C\big(\|f\|_{C^{0,\alpha}_{\mathcal {X}}([0,T]\times\mathbb{T}^d)}+\|g\|_{C^{2,\alpha}_{\mathcal {X}}(\mathbb{T}^d)}\big)\leq C.
\end{equation}
Consider that $\mathcal {L}_0=\Delta_{\mathcal {X}}$ is an operator from $C^{2,\alpha}_{\mathcal {X}}([0,T]\times\mathbb{T}^d)$ to $C^{0,\alpha}_{\mathcal {X}}([0,T]\times\mathbb{T}^d)$. Then by using the continuity method, the uniform boundness estimate \eqref{xue22126}
implies that $\mathcal {L}_1=\Delta_\mathcal {X}+\sum^{q}_{i=1}b_iX_i+c$ is also an operator from $C^{2,\alpha}_{\mathcal {X}}([0,T]\times\mathbb{T}^d)$ to $C^{0,\alpha}_{\mathcal {X}}([0,T]\times\mathbb{T}^d)$.
Since the existence of the solution $\tilde{u}$ for the equation \eqref{(3)} with the operator $\mathcal {L}_0$ and the estimate \eqref{xue221.6} are obtained in Step 1, then we can obtain the existence of the solution $u$ for the equation \eqref{peq}. The proof is completed.
\end{proof}

\section{WELL-POSEDNESS AND REGULARITY OF THE MFG SYSTEMS}
In this section, we obtain the well-posedness and the regularity of the MFG system \eqref{MFG} given by Theorem \ref{thm3}. To begin with, we need to establish well-posedness and the corresponding estimates for solutions to the HJE \eqref{HJE} and the FPE \eqref{FPE}, respectively, which plays an important role  in the subsequent arguments.
\subsection{Estimates for the HJE}
Let $u$ be a solution of the quasi-linear equation \eqref{HJE}. Set $w(t,x)=e^{-\frac{u(T-t,x)}{2}}$, by using the Hopf-Cole transform, then $w$ is a solution of the following linear equation
\begin{equation}\label{eee}
\left\{
  \begin{aligned}
    &\partial_t w-\Delta_{\mathcal {X}}w+\frac{1}{2}F w=0,& (t,x)\in (0,T]\times\mathbb{T}^d, \\
    &w(0,x)=e^{-\frac{G(x,\bar{m}_T)}{2}},& x\in \mathbb{T}^d.
    \end{aligned}
\right.
\end{equation}
Without loss of generality we assume that $w(0,x)=0$. Indeed, $f=\Delta_{\mathcal {X}}w(0,x)-\frac{1}{2}Fw$ by setting $\tilde{w}(t,x)=w(t,x)-w(0,x)$.
\begin{Prop}\label{PE}
Let $w\in C_{\mathcal {X}}^2([0,T]\times\mathbb{T}^d)$ be a solution of the equation \eqref{eee}, then
\begin{equation}\label{3.01}
\sup_{(t,x)\in[0,T]\times\mathbb{T}^d}|w|\leq\sup_{x\in\mathbb{T}^d}|w(0,x)|+T\sup_{(t,x)\in[0,T]\times\mathbb{T}^d}|f|.
\end{equation}
\end{Prop}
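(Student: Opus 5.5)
The plan is a weak maximum principle argument with a linear barrier in time. The statement is set up with $w$ solving a linear equation of the form $\partial_t w-\Delta_{\mathcal X}w+\frac12 Fw=f$, where the source $f$ comes from the reduction $\tilde w=w-w(0,x)$. I will use that the zeroth-order coefficient $\frac12 F$ can be made nonnegative. If $F$ is only bounded, say $|F|\le C$ by (H3), I first replace $w$ by $e^{-\lambda t}w$ with $\lambda=\frac12\sup|F|$; this shifts the coefficient to $\frac12F+\lambda\ge 0$ and only affects the constant. In the clean form of \eqref{3.01} I take $c:=\frac12F\ge0$. Set $M:=\sup_{x}|w(0,x)|$ and $N:=\sup|f|$, and define the comparison functions
\[
v_\pm(t,x):=M+tN\mp w(t,x).
\]
Then $v_\pm(0,x)\ge0$. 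Moreover
\[
\partial_t v_\pm-\Delta_{\mathcal X}v_\pm+cv_\pm=N\mp f+c(M+tN)\ge0
\]
on $(0,T]\times\mathbb T^d$.

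The key step is to show that $v_\pm\ge0$. Suppose instead that $\min v_\pm<0$. For $\epsilon>0$ I perturb to $v_\epsilon:=v_\pm+\epsilon t$, which is strictly supersolution-like: $\partial_tv_\epsilon-\Delta_{\mathcal X}v_\epsilon+cv_\epsilon\ge\epsilon>0$ at points where $v_\epsilon\le 0$. Since $\mathbb T^d$ is compact and $w$ is continuous, $v_\epsilon$ attains a negative minimum at some point $(t_1,x_1)$ with $t_1>0$, because $v_\epsilon(0,\cdot)\ge0$. At this point:
\begin{itemize}
\item $\partial_tv_\epsilon(t_1,x_1)\le0$, using the one-sided derivative if $t_1=T$;
\item $cv_\epsilon\le0$;
\item $\Delta_{\mathcal X}v_\epsilon(t_1,x_1)=\sum_k X_k^2v_\epsilon\ge0$.
\end{itemize}
The last inequality holds because along the integral curve $s\mapsto\gamma_k(s)$ of $X_k$ through $x_1$, the function $s\mapsto v_\epsilon(t_1,\gamma_k(s))$ has an interior minimum at $s=0$, and its second derivative there is exactly $X_k^2v_\epsilon(t_1,x_1)\ge0$. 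Combining the three facts gives $\partial_tv_\epsilon-\Delta_{\mathcal X}v_\epsilon+cv_\epsilon\le0<\epsilon$, a contradiction. Letting $\epsilon\to0$ yields $v_\pm\ge0$, which means $|w|\le M+tN\le M+TN$, i.e. \eqref{3.01}.

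The main point needing care is the pointwise second-order condition for the sum-of-squares operator at a minimum. This is where the $C^2_{\mathcal X}$ regularity of $w$ enters: it lets me compute $X_k^2v$ as an honest second derivative along integral curves, which is elementary and needs no group structure. This also explains why the argument works on $\mathbb T^d$ even though the paper remarks that a weak maximum principle is lacking in general. The difficulty there concerns uniqueness on unbounded domains and in weak classes, whereas here compactness of $\mathbb T^d$ and classical regularity make the minimum-point argument available. The secondary point is the sign of the zeroth-order term, handled by the exponential rescaling $e^{-\lambda t}$ described above.
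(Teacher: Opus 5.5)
Your proof is correct. It uses the same barrier $M+tN$ and the same $e^{-\lambda t}$ reduction to a nonnegative potential as the paper; the difference lies in how the comparison step is carried out.

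The paper does not argue at a minimum point. It declares $v$ a viscosity supersolution and $-v$ a viscosity subsolution, then appeals to the viscosity comparison principle of Proposition \ref{CP}, simply asserting that it ``is also valid'' for \eqref{eee}. Taken literally, that needs some translation. Proposition \ref{CP} concerns backward equations on $\mathbb{R}^d$ of the form $-\partial_t u-\frac12{\rm tr}(\sigma\sigma^{T}D^2u)+H(t,x,Du)=0$, with no zeroth-order term. So one must reverse time, extend periodically, and rewrite $\Delta_{\mathcal X}$ as a trace term plus a bounded first-order drift. One also needs a version of the comparison theorem that allows the potential $\frac12Fw$.

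Your minimum-point argument avoids all of this and is self-contained. It uses only compactness of $\mathbb{T}^d$, the classical $C^2_{\mathcal X}$ regularity assumed in the statement, and the elementary fact that $X_k^2v\ge0$ at a spatial minimum, read off along the integral curve of $X_k$. In principle the paper's route applies to merely semicontinuous viscosity sub/supersolutions, but that generality is not needed here, since $w$ is classical.

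Your caveat that the rescaling ``only affects the constant'' is also the accurate reading of the paper's ``it is not restrictive to assume $F\ge0$''. Indeed, \eqref{3.01} with constant $1$ genuinely requires $F\ge0$: for $F\equiv-2a$ with $a>0$ and $f\equiv0$, the solution $w=e^{at}$ violates it. For general bounded $F$, both arguments give $|w|\le e^{\lambda T}\big(\sup_x|w(0,x)|+T\sup|f|\big)$ with $\lambda=\frac12\sup|F|$.
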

\begin{proof}
First, we note that comparison principle given in Proposition \ref{CP} is also valid for the equation \eqref{eee}.
Then it is not restrictive to assume that $F(t,\bar{m})\geq0$. If not the case, we set $\tilde{w}(t,x)=e^{-\lambda t}w(t,x)$ and $\tilde{u}$ satisfies
$$ \partial_t \tilde{w}-\Delta_{\mathcal {X}}\tilde{w}+\frac{1}{2}\tilde{F}\tilde{w}=f\geq0,$$
with $\tilde{F}=F+2\lambda\geq0$, provided $\lambda$ is suitably chosen.
Set
$$v(t,x)=\sup_{x\in\mathbb{T}^d}|w(0,x)|+t\sup_{(t,x)\in[0,T]\times\mathbb{T}^d}|f(t,x)|.$$
For any $(t,x)\in (t,x)\in[0,T]\times\mathbb{T}^d$, we have
$$\partial_t v-\Delta_{\mathcal {X}}v+\frac{1}{2}Fv=\sup_{(t,x)\in[0,T]\times\mathbb{T}^d}|f(t,x)|+\frac{1}{2}Fv
\geq\sup_{(t,x)\in[0,T]\times\mathbb{T}^d}|f(t,x)|,$$
which implies that $v$ is a lower-semicontinuous
viscosity supersolution for the equation \eqref{eee}.
Similarly, we show that $-v$ is a  upper-semicontinuous
viscosity subsolution for the equation \eqref{eee}.
Consider that for any $x\in \mathbb{T}^d$,
$$-v(0,x)\leq w(0,x)\leq v(0,x),$$
then by using Proposition \ref{CP} for the equation \eqref{eee}, we have
$$\sup_{(t,x)\in[0,T]\times\mathbb{T}^d}|w|\leq|v(t,x)|=\sup_{x\in\mathbb{T}^d}|w(0,x)|
+T\sup_{(t,x)\in[0,T]\times\mathbb{T}^d}|f(t,x)|.$$
The proof is completed.
\end{proof}
Now we prove the existence and uniqueness of solutions for the HJE \eqref{HJE} as follows.
\begin{proof}[{\bf Proof of Proposition \ref{HJEPF}}]
\noindent{\bf{Step\ 1.}}
To prove \eqref{uhje} for the solution $u$, it is necessary to prove the  solution $w$ to the equation \eqref{eee} is a positive solution.
Let $$w^\delta:=\delta -w e^{t\frac{\|F\|_{L^{\infty}([0,T]\times\mathbb{T}^d)}}{2}},\ \text{and}\  \delta:=e^{-\frac{1}{2}\max_{x\in\mathbb{T}^d}|G|}>0.$$
By calculation, we show that $w^\delta$ satisfies
 $$ \partial_t w^\delta-\Delta_{\mathcal {X}}w^\delta+\frac{1}{2}\left(F-\|F\|_{L^{\infty}([0,T]\times\mathbb{T}^d)}\right)w^\delta= \frac{\delta}{2}\left(F-\|F\|_{L^{\infty}([0,T]\times\mathbb{T}^d)}\right)\leq0.$$
Since $w^\delta(0,x)\leq0$, by using Proposition \ref{CP} for $w^\delta$, we get
 $$\max_{(t,x)\in [0,T]\times\mathbb{T}^d} w^\delta(t,x)\leq0.$$
Then $0<\delta\leq w e^{t\frac{\|F\|_{L^{\infty}([0,T]\times\mathbb{T}^d)}}{2}}$.
 Since $F$ is bounded, then $w>0$ for any $t\in(0,T)$.

 Furthermore, consider that $u=-2\ln w$, and
$$  Xu=-2w^{-1}Xw,\quad X^2u=2w^{-1}X^2w+2w^{-2}Xw,$$
combined with the inequality \eqref{21.2} for $w$, we have
\begin{align*}
  \|u\|_{C^{2,\alpha}_{\mathcal {X}}([0,T]\times\mathbb{T}^d)} &\leq C\|w\|_{C^{2,\alpha}_{\mathcal {X}}([0,T]\times\mathbb{T}^d)}\leq C.
\end{align*}

\noindent{\bf{Step\ 2.}}
The solution is unique provided by Proposition \ref{PE}. Indeed, suppose that $w_1$ and $w_2$ are solutions of the equation \eqref{eee} with $g=0$. Let $\tilde{w}=w_1-w_2$, then $\tilde{w}$ is also a solution of \eqref{eee} with $\tilde{w}(0,x)=0$. Then we have $\tilde{w}\equiv0$, it implies that $u$ is unique, the result follows.
\end{proof}
\subsection{Estimates for the FPE}
\begin{Def}{\bf{(Weak solution)}}
We say that $m\in L^1([0,T]\times\mathbb{T}^d)$ is a weak solution to the FPE \eqref{FPE}, if for any test function $\varphi\in C^{\infty}_{0}([0,T]\times\mathbb{T}^d)$, we have
\begin{align}\label{eq4.1}
\begin{split}
& \iint_{[0,T]\times\mathbb{T}^d}-m\partial_t\varphi-\big(\Delta_{\mathcal{X}}\varphi +X \varphi\cdot Xu\big)mdxdt=0.
\end{split}
\end{align}
\end{Def}
 Denote
$H^{k,k}_\mathcal {X}(\Omega_T)$ the isotropic Sobolev space, that is,
$$H^{k,k}_\mathcal {X}(\Omega_T):=\big\{u\in L^2 (\Omega_T):\partial^{j}_{t}X^I u\in L^2(\Omega_T),\ \text{for\ any\ } I\ \text{satisfies}\ |I|+j\leq k\big\}.$$
Denote the function  $\varphi_t(x):=\partial_t\varphi(t,x)$ for any $x\in\mathbb{T}^d$. Since $\int^t_0\varphi(s,x)ds\in C^{\infty}_{0}([0,T]\times\mathbb{T}^d)$, then there is an equivalent definition for the weak solution $m\in H^{1,1}_\mathcal {X}([0,T]\times\mathbb{T}^d)$ to the integral equation \eqref{eq4.1}, if and only if it satisfies
\begin{equation}\label{def2}
  \iint_{[0,T]\times\mathbb{T}^d}\big(\partial_t m \varphi_t-m\Delta_{\mathcal{X}}\varphi_t -mX \varphi_t\cdot Xu\big)e^{-\theta t}dxdt=0,
\end{equation}
for any $\varphi\in C^{\infty}_{0}([0,T]\times\mathbb{T}^d)$, where $\theta$ is any given constant.
\begin{Lem}\label{APP14}
Let $m$ be the weak solution of the FPE \eqref{FPE}, and $u$ be the solution of the HJE \eqref{HJE}. For any  $s,t\in[0,T]$, there is a positive constant $C$ such that

  \noindent\rm{(i)}  $\ d_1(m_t,m_s) \leq C|t-s|^{\frac{1}{2}},$

  \noindent\rm{(ii)}   $\int_{\mathbb{T}^{d}}|x|^2dm_t(x)\leq C\int_{\mathbb{T}^{d}}|x|^2 dm_0+C.$

\end{Lem}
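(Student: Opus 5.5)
We use the probabilistic representation of $m$. By Proposition \ref{3.2} the weak solution of the FPE \eqref{FPE} is unique. Moreover, by \eqref{mu}--\eqref{eq2.11}, the law of the controlled diffusion \eqref{SDE} is a weak solution of \eqref{FPE}. Here the feedback is $\alpha^*(s,Z_s)=-Xu(s,Z_s)$ from \eqref{11.20}, the initial law is $\mathcal{L}(Z_0)=m_0$, and $Z_0$ is independent of $B$. Hence $m_t=\mathcal{L}(Z_t)$, where
\begin{equation*}
Z_t=Z_0-\int_0^t\sigma(Z_r)Xu(r,Z_r)\,dr+\sqrt{2}\int_0^t\sigma(Z_r)\,dB_r .
\end{equation*}
This SDE is well posed: $\sigma$ is smooth and periodic, hence Lipschitz and bounded, and by Proposition \ref{HJEPF} we have $\|u\|_{C^{2,\alpha}_{\mathcal X}}\le C$. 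So $Xu$ is bounded, and Lipschitz in $x$ by the embedding \eqref{1019}, or at least continuous and bounded, which suffices for weak existence and for the moment bounds below. The points to check are that this law is the (unique) solution from Proposition \ref{3.2} and that the drift $-\sigma Xu$ is uniformly bounded, $\|\sigma Xu\|_{L^\infty}\le C$.

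For (i), fix $s<t$. The pair $(Z_s,Z_t)$ is a coupling of $(m_s,m_t)$, so \eqref{distance} gives
\begin{equation*}
d_1(m_s,m_t)\le \mathbf E|Z_t-Z_s|\le \mathbf E\int_s^t|\sigma Xu(r,Z_r)|\,dr+\sqrt2\,\mathbf E\Big|\int_s^t\sigma(Z_r)\,dB_r\Big|.
\end{equation*}
The first term is at most $C|t-s|$. For the second, Cauchy--Schwarz and the It\^o isometry give the bound $\big(\mathbf E\int_s^t|\sigma(Z_r)|^2dr\big)^{1/2}\le C|t-s|^{1/2}$. Since $|t-s|\le T$, we get $|t-s|\le T^{1/2}|t-s|^{1/2}$, which yields (i). On $\mathbb T^d$ one uses the projected process; the torus distance is dominated by the Euclidean one, so the estimate passes to the quotient.

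For (ii), we write $|Z_t|^2\le 3|Z_0|^2+3\big|\int_0^t\sigma Xu\,dr\big|^2+6\big|\int_0^t\sigma\,dB_r\big|^2$. Taking expectations and using the same bounds gives
\begin{equation*}
\mathbf E|Z_t|^2\le 3\int|x|^2dm_0+C T^2+C T .
\end{equation*}
The left side equals $\int|x|^2dm_t$. On $\mathbb T^d$ this is automatic, since $|x|$ is bounded there, but the computation also gives the stated form uniformly in $t$. Alternatively, one can test \eqref{eq4.1} with $\varphi=|x|^2$, suitably cut off in time. Then
\begin{equation*}
\frac{d}{dt}\int|x|^2m_t=\int\big(\Delta_{\mathcal X}|x|^2-X|x|^2\cdot Xu\big)m_t,
\end{equation*}
and the right side is bounded by $C$ because $\sigma$, its derivatives, and $Xu$ are bounded, with $|X|x|^2|\le C|x|\le C(1+|x|^2)$. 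Gronwall's inequality then gives (ii).

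The main obstacle is the identification of the weak solution with the law of $Z$, which needs enough regularity of the drift. I would rely on the Lipschitz bound on $Xu$ from \eqref{1019} together with the uniqueness in Proposition \ref{3.2}. As a fallback I would argue purely analytically: (ii) via the test function $|x|^2$, and (i) via Kantorovich duality. For (i), for any $1$-Lipschitz $\varphi$ one first mollifies to $\varphi_\epsilon$ with $|X\varphi_\epsilon|\le C$ and $|\Delta_{\mathcal X}\varphi_\epsilon|\le C\epsilon^{-1}$. Then
\begin{equation*}
\Big|\int\varphi\, d(m_t-m_s)\Big|\le 2\epsilon+C|t-s|(1+\epsilon^{-1}),
\end{equation*}
and choosing $\epsilon=|t-s|^{1/2}$ gives (i).
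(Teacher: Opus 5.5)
Your proposal is correct and follows essentially the paper's route: identify $m_t$ with the law of the controlled diffusion \eqref{SDE} with feedback $-Xu$, bound $d_1(m_s,m_t)$ by $\mathbf E|Z_t-Z_s|$ via the coupling $(Z_s,Z_t)$, control the drift by $\|\sigma Xu\|_{L^\infty}$ and the noise by the It\^{o} isometry, and handle the second moment the same way. One small correction: \eqref{1019} does not make $Xu$ Euclidean-Lipschitz, since that would need $u\in C^{\mathrm{k}+1,\alpha}_{\mathcal X}$ with H\"{o}rmander index $\mathrm{k}\ge 2$; this does not affect the argument, because bounded continuous drift suffices for weak existence as you say, the paper takes $m_t=\mathcal L(Z_t)$ for granted anyway, and your analytic fallback through the weak formulation bypasses the identification entirely.
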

\begin{proof}
First, we prove \rm{(i)} by the same argument as Lemma 3.4 in  \cite{C12}. By the definition \eqref{distance} of $d_1$, the law $\gamma$ of the pair $(Z_t,Z_s)$ belongs to $\Pi(m_t,m_s)$, so
$$ d_1(m_t,m_s) \leq \int_{\mathbb{T}^{d}\times\mathbb{T}^{d}}|x-y|d\gamma(x,y)=\mathbf{E}\big[|Z_t-Z_s|\big],$$
where the state variable $Z_s$ subjects to the SDE \eqref{SDE}.
For instance $t<s$, it follows from the H\"{o}lder inequality, the Jensen inequality and It\^{o} isometry that
\begin{align}\label{7.31}
\begin{split}
\mathbf{E}\big[|Z_s-Z_t|\big]
&\leq \mathbf{E}\Big[\int_t^s |\sigma(Z_\tau)\alpha_\tau|d\tau\Big]
+\Big(\mathbf{E}\Big[\big(\int^s_t|\sigma( Z_\tau)|dB_{\tau}\big)^2\Big]\Big)^{\frac{1}{2}}\\
 &\leq C\|\sigma\|_{L^\infty(\mathbb{T}^d)}\Big(\int_t^s |Xu|^2d\tau\Big)^{\frac{1}{2}}|s-t|^{\frac{1}{2}}
  +\mathbf{E}\Big[\int^s_t|\sigma(Z_\tau)|^2d\tau\Big]^{\frac{1}{2}}\\
  &\leq  C\|\sigma\|_{L^\infty(\mathbb{T}^d)}\|Xu\|_{L^{2}([0,T]\times\mathbb{T}^d)}|t-s|^{\frac{1}{2}},
  \end{split}
  \end{align}
where $\|\sigma\|_{L^\infty(\mathbb{T}^d)}$ and $\|X u\|_{L^\infty([0,T]\times\mathbb{T}^d)}$ are bounded, because of the assumption {\bf{(H1)}} and the uniform estimate \eqref{21.2} holds.

Next, we prove {\rm{(ii)}}. Similar to \eqref{7.31}, we get
\begin{align*}
 \int_{\mathbb{T}^d}|x|^2 dm_t(x)&=\mathbf{E}\left[|Z_t|^2\right]\\
  &  \leq C\mathbf{E}\Big[|Z_0|^2+\Big(\int_{0}^{t}\sigma(Z_\tau)| X u|d\tau\Big)^2
  +\Big(\int_{0}^{t}\sigma(Z_{\tau})dB_{\tau}\Big)^2\Big]\\
 &\leq C\int_{\mathbb{T}^{d}}|x|^2 dm_0+C\|\sigma\|_{L^{\infty}(\mathbb{T}^d)}^2\Big(t^2\| X u\|^2_{L^{\infty}([0,T]\times\mathbb{T}^d)}+t\Big).
  \end{align*}
Hence the estimate {\rm{(ii)}} is valid, and the result follows.
  \end{proof}
 Then by using a variant of Lax-Milgram theorem given in Lemma \ref{App1.1}, we obtain the existence and the  uniqueness  of the solution to the FPE \eqref{FPE}.

\begin{proof}[{\bf Proof of Proposition \ref{3.2}}]
First, recall that the control $\alpha$ is continuous satisfying
$\alpha(t,x)=-Xu,$ where $u$ is the solution to the HJE \eqref{HJE}. We need to show that, for every $\phi\in L^2([0,T]\times\mathbb{T}^d)$, the auxiliary equation
\begin{equation}\label{adj}
\left\{
 \begin{aligned}{ll}
   & \partial_t m-\Delta_\mathcal {X} m-Xu\cdot Xm=\phi,&(t,x)\in [0,T)\times\mathbb{T}^d,\\
   & m(T,x)=0,& x\in\mathbb{T}^d,
\end{aligned}
\right.
\end{equation}
is well-posed in $H^{1,1}_{\mathcal {X}}([0,T]\times\mathbb{T}^d)$ in the weak sense, that is, for any $\varphi\in C^{\infty}_{0}([0,T]\times\mathbb{T}^d)$
\begin{equation*}
\iint_{[0,T]\times\mathbb{T}^d}\big( m_t \varphi_t-Xm X^*\varphi_t- \varphi_tX m \cdot Xu\big)e^{-\theta t}dxdt=\iint_{[0,T]\times\mathbb{T}^d}\phi\varphi_te^{-\theta t}dxdt.
\end{equation*}
The previous well-posedness is proved by standard Hilbert space arguments. In fact, on the space $H^{1,1}_{\mathcal {X}}([0,T]\times\mathbb{T}^d)$, we consider the bilinear form $\left<m,\varphi\right>: H^{1,1}_\mathcal {X}([0,T]\times\mathbb{T}^d)\times C^{\infty}_0([0,T]\times\mathbb{T}^d)\rightarrow \mathbb{R}$ is given by
\begin{equation*}
  \left<m,\varphi\right>:=\iint_{[0,T]\times\mathbb{T}^d}\big( m_t \varphi_t-Xm X^*\varphi_t-\varphi_t X m \cdot Xu\big)e^{-\theta t}dxdt,
\end{equation*}
where $C^{\infty}_0([0,T]\times\mathbb{T}^d)$ is a dense subspace of \(H_{\mathcal{X}}^{1,1}([0,T]\times\mathbb{T}^d)\).
 Then  we claim that the bilinear form $\left<m,\varphi\right> $ satisfies the bounded and coercive condition in the variant of Lax-Milgram theorem given in Lemma \ref{App1.1}, that is, for any $m\in H^{1,1}_\mathcal {X}([0,T]\times\mathbb{T}^d)$, $\varphi\in C^{\infty}_0([0,T]\times\mathbb{T}^d)$, for some $M\geq0,\ \delta>0$ such that
\begin{align}
  \left<m,m\right> &\geq \delta\|m\|^2_{H^{1,1}_\mathcal {X}([0,T]\times\mathbb{T}^d)} ,\label{eq4.3}\\\label{eq4.4}
  |\left<m,\varphi\right>| &\leq M \|m\|_{H^{1,1}_\mathcal {X}([0,T]\times\mathbb{T}^d)}\|\varphi\|_{H^{1,1}_\mathcal {X}([0,T]\times\mathbb{T}^d)} .
\end{align}

For the estimate \eqref{eq4.4}, by the Cauchy-Schwartz inequality and the subelliptic interpolation inequality, it is easily to check  that
\begin{align*}
  |\left<m,\varphi\right> |
   &\leq  M\|m\|_{H^{1,1}_\mathcal {X}([0,T]\times\mathbb{T}^d)}\|\varphi\|_{H^{1,1}_\mathcal {X}([0,T]\times\mathbb{T}^d)} ,
\end{align*}
where $M$ depends on $\|c_k\|_{L^\infty(\mathbb{T}^d)},\ k=\{1,\ldots,q\},$ given by \eqref{ck} and $\|Xu\|_{L^\infty([0,T]\times\mathbb{T}^d)}$.

Now we check the estimate \eqref{eq4.3}. For any $m\in {H^{1,1}_\mathcal {X}([0,T]\times\mathbb{T}^d)}$ and any $\theta$, we obtain
\begin{align*}
  \left<m,m\right> = &\iint_{[0,T]\times\mathbb{T}^d}\big(m_t^{2}-XmX^*m_{t}-m_{t}Xu\cdot  Xm\big)e^{-\theta t}dxdt\\
  =&\iint_{[0,T]\times\mathbb{T}^d}m_t^{2}e^{-\theta t}dxdt+\iint_{[0,T]\times\mathbb{T}^d}XmX m_te^{-\theta t}dxdt\\
  &+\iint_{[0,T]\times\mathbb{T}^d}-\big(\sum^q_{i=1}c_i+Xu\big)Xmm_{t}e^{-\theta t}dxdt\\
   :=& I_1+I_2+I_3.
\end{align*}

By integration by parts, we obtain respectively
\begin{align*}
I_2 &= \frac{1}{2}\iint_{[0,T]\times\mathbb{T}^d}\frac{\partial}{\partial t}\left(|X m|^2 e^{-\theta t}\right)dxdt
       +\frac{\theta}{2}\iint_{[0,T]\times\mathbb{T}^d}|X m|^2 e^{-\theta t}dxdt  \\
   &= \frac{e^{-\theta T}}{2}\int_{\mathbb{T}^d}|X m|^2\big|_{t=T} dx +\frac{\theta}{2}\iint_{[0,T]\times\mathbb{T}^d}|X m|^2 e^{-\theta t}dxdt  \\
   &\geq \frac{\theta}{2}\iint_{[0,T]\times\mathbb{T}^d}|X m|^2 e^{-\theta t}dxdt.
\end{align*}
Since $\|c_i\|_{L^\infty(\mathbb{T}^d)}+\|Xu\|_{L^\infty([0,T]\times\mathbb{T}^d)}\leq C,$ $i=1,\ldots,q$, combining with $\varepsilon$-Cauchy inequality, we obtain
\begin{align*}
I_3   &\geq -C\|Xu\|_{L^\infty([0,T]\times\mathbb{T}^d)}
  \iint_{[0,T]\times\mathbb{T}^d}|Xm m_t|e^{-\theta t}dxdt  \\
   &\geq -\frac{C}{4\varepsilon}\iint_{[0,T]\times\mathbb{T}^d}|Xm|^2 e^{-\theta t}dxdt
       -\varepsilon \iint_{[0,T]\times\mathbb{T}^d}m^2_t e^{-\theta t}dxdt.
  \end{align*}

Together with above these inequalities, we get
\begin{align*}
 \left<m,m\right>
   &\geq (1-\varepsilon)\iint_{[0,T]\times\mathbb{T}^d}m^2_t e^{-\theta t}dxdt +\left(\frac{\theta}{2}-\frac{C}{4\varepsilon}\right)\iint_{[0,T]\times\mathbb{T}^d}|Xm|^2 e^{-\theta t}dxdt\\
&\geq \delta\|m\|^2_{H^{1,1}_\mathcal {X}([0,T]\times\mathbb{T}^d)},
\end{align*}
 the last inequality is valid by taking $\delta>0$ which is large enough, when $\varepsilon>0$ is sufficient small. Then the claim is proved.  Thus the bilinear form $\left<m,\varphi\right> $ is coercive and continuous. Clearly,
 $$ H^{1,1}_\mathcal {X}([0,T]\times\mathbb{T}^d)\ni m\to\iint_{[0,T]\times\mathbb{T}^d}m\phi dxdt\in\mathbb{R}$$ is a continuous linear functional on $C^{\infty}_0([0,T]\times\mathbb{T}^d)$.
By the Lax-Milgram theorem, there  exists a unique $m\in H^{1,1}_\mathcal {X}([0,T]\times\mathbb{T}^d)$ such that for all $\varphi\in C^{\infty}_0([0,T]\times\mathbb{T}^d)$,
 $$\left<m,\varphi\right> =\iint_{[0,T]\times\mathbb{T}^d}\phi \varphi_te^{-\theta t} dxdt.$$

Then we define the linear operator $ T: L^2([0,T]\times\mathbb{T}^d) \to H^{1,1}_\mathcal {X}([0,T]\times\mathbb{T}^d)$ by $T \phi := m$, where $m$ is the unique solution to \eqref{adj}.
Note that  the embedding from $H^1_{\mathcal{X}}([0,T]\times\mathbb{T}^d)$ into \( L^2([0,T]\times\mathbb{T}^d) \) is compact, then \( L^2([0,T]\times\mathbb{T}^d)\to L^2([0,T]\times\mathbb{T}^d)\) is a linear compact operator  (still denoted $T$). Thus the equation
$$
 \partial_t m-\Delta_\mathcal {X} ^*m-{\rm{div}}_{\mathcal {X}^*}(mXu)=0,\ \text{in}\ (0,T]\times\mathbb{T}^d,$$
is equivalent to
$$
(I - T)^*m = 0,
$$
where \( I \) is the identity operator of \( L^2([0,T]\times\mathbb{T}^d) \).

Since \( T \) is compact, the Fredholm alternative applies. Indeed \( I - T \) is a Fredholm operator of index zero (see Lemma 4.45 in \cite{AA02}), which means that $${\rm dim \ ker}(I -  T) ={\rm dim \ ker}(I -  T)^* .$$ In other words, the number of linearly independent solutions of the equation \( (I -  T)^* = 0 \) is equal to the number of linearly independent solutions of the equation \( I -  T = 0 \). Then we must find the number of linearly independent solutions of \( (I -  T )m= 0 \), that is
$$  -\partial_t m-\Delta_{\mathcal {X}}m-Xu\cdot Xm= 0.$$

By Proposition \ref{HJEPF}, the solution \( u \) belongs to \( C^{2,\alpha}_{\mathcal{X}}([0,T]\times\mathbb{T}^d) \) for some \( \alpha \in (0, 1) \). Moreover the operator \(-\Delta_{\mathcal {X}}-Xu\cdot X \) satisfies the strong maximum principle (see \cite{BD01}) and the temporal dimension is non-degenerate. Thus by the considerations above (Fredholm alternative) the equation $(I - T)^* = 0$, and hence the equation \eqref{adj} admits a unique solution \( m \in H^{1,1}_\mathcal{X}([0,T]\times\mathbb{T}^d) \) up to a multiplicative constant.
\end{proof}
\subsection{The proof of Theorem \ref{thm3}}
\begin{proof}[\bf{Proof of Theorem \ref{thm3}}]
The uniqueness of the MFG systems \eqref{MFG} holds depending on the monotonicity of $F$ and $G$ in \textbf{(H4)}, which can refer to Theorem 4.3 in \cite{R18}, and we omit it here. In the following, we only prove the existence, which is based on the Schauder fixed point theorem.

For any probability measures $\mu\in\mathcal {C}$, we associate $m=\psi(\mu)\in\mathcal {C}$ in the following way. Let $u$ be a unique solution to the terminal problem
\begin{equation}\label{3.3}
\left\{
 \begin{aligned}
 &-\partial_t u-\Delta_{\mathcal {X}} u+\frac{1}{2}| X u|^2=F(x,\mu),&(t,x)\in [0,T)\times\mathbb{T}^d,\\
  &u(T,x)=G(x,\mu_T),&x\in\mathbb{T}^d.
 \end{aligned}
\right.
\end{equation}
Then we define $m:=\psi(\mu)$ as the solution of the FPE
\begin{equation}\label{3.4}
\left\{
 \begin{aligned}
   & \partial_t m-\Delta_\mathcal {X}^* m-{\rm{div}}_{\mathcal {X}^*}(mXu)=0,&(t,x)\in [0,T)\times\mathbb{T}^d,\\
  &  m(0,x)=m_0,&x\in\mathbb{T}^d.
 \end{aligned}
\right.
\end{equation}

 First, let us check that $\psi$ is a well-defined from $\mathcal {C}$ to itself. 
 Under  {\bf{(H1)}}-{\bf(H4)},
 Theorem \ref{eu} shows that the HJE \eqref{3.3} has a unique solution $u$ belonging to $C^{2,\alpha}_{\mathcal {X}}([0,T]\times\mathbb{T}^d)$. Moreover, we have an estimate \eqref{21.2}.
Now, we turn to the FPE \eqref{3.4} with the assumption {\bf(H2)}. Since $u\in C^{2,\alpha}_{\mathcal {X}}([0,T]\times\mathbb{T}^d)$, the maps $(t,x)\to Xu(t,x)$ and $(t,x)\to \Delta_{\mathcal {X}}u(t,x)$ belong to $C^{0,\alpha}_{\mathcal {X}}([0,T]\times\mathbb{T}^d)$. From Lemma \ref{APP14}, for any $s, t\in[0,T]$, we have the following estimates
$$d_1(m(t,\cdot),m(s,\cdot)) \leq C|t-s|^{\frac{1}{2}}.$$
 Thus by the definition \eqref{distance}, then $m$ belongs to $\mathcal {C}$, and the mapping $\psi:\mu\rightarrow m:=\psi(\mu)$ is well-defined from $\mathcal {C}$ into itself.

Second, let us check that $\psi$ is a continuous map. Let $\mu_n\in \mathcal {C}$ converge to some $\mu$. Let $(u_n,m_n)$ and $(u,m)$ be the corresponding solutions to \eqref{3.3} and \eqref{3.4}, for any $\varphi\in C^\infty_0([0,T]\times\mathbb{T}^d)$, there holds
\begin{equation}\label{m}
 \iint_{[0,T]\times\mathbb{T}^d}m_n\big(-\partial_t\varphi-\Delta_\mathcal {X}\varphi +Xu_n X\varphi\big) dxd\tau=0.
\end{equation}
By the continuity assumption {\bf{(H3)}} on $F$ and $G$, we get $(t,x)\rightarrow F(x,\mu_n(t))$, $x\rightarrow G(x,\mu_n(T))$ locally uniformly converge to $(t,x)\rightarrow F(x,\mu(t))$, $x\rightarrow G(x,\mu(T))$.
Then one gets the local uniformly convergence of $u_n$ to $u$ by standard arguments of viscosity solutions.
  Since the uniformly estimate \eqref{21.2}, we know that $\{Xu_n\}_n$ are locally uniformly H\"{o}lder continuous and therefore locally uniformly converges to $Xu$.  Let $n\rightarrow\infty$ in \eqref{m}, by the $L^\infty_{loc}$-weak* convergence of $m_n$, and by the convergence $Xu_n\rightarrow Xu$ a.e., then the limit of any converging subsequence of $m_n$ is a weak solution of \eqref{3.4}. But $m$ is the unique weak solution of the equation \eqref{3.4}, which proves that $\{m_n\}$ converges to $m$.

Because $\mathcal {C}$ is compact, the continuous map $\psi$ is compact. We conclude by Schauder fixed point theorem that the compact map $\mu\rightarrow m=\psi(\mu)$ has a fixed point in $\mathcal {C}$. This fixed point $m$ and its corresponding $u$ is a coupling solution of the MFG system \eqref{MFG}. Then the result follows.
\end{proof}
\section{CONCLUSION}
In this paper, we develop a global regularity theory for degenerate parabolic equations \eqref{peq} defined on $\mathbb{T}^d$ induced by H\"{o}rmander vector fields. Since the H\"{o}rmander operators we consider do not admit any group structure, the representations of fundamental solutions are unavailable. Instead, we derive a priori Schauder estimates within the framework of Campanato spaces. As a consequence, we obtain the existence and uniqueness of solutions to the HJE \eqref{HJE} in the intrinsic H\"{o}lder space.

Combined with the well-posedness of the FPE \eqref{FPE}, we obtain the existence and uniqueness of the coupling solutions to the degenerate MFG systems \eqref{MFG}. Here we establish a rigorous characterization of general degenerate MFG systems by means of general H\"{o}rmander  sum-of-squares operators satisfying the natural condition \eqref{con2}.  The coupling solutions can describe the Nash equilibria of a differential game with infinitely many small players, where  the generic player has some forbidden directions because it follows either a dynamic generated by the H\"{o}rmander vector fields.


\addcontentsline{toc}{section}{References} 
\end{document}